\newtheorem{thm}{Theorem}[section]
\newtheorem*{theorem}{Theorem}
\newtheorem{defn}[thm]{Definition}
\newtheorem{lemma}[thm]{Lemma}
\newtheorem{prop}[thm]{Proposition}
\newtheorem{cor}[thm]{Corollary}
\newtheorem*{corollary}{Corollary}
\newtheorem{fact}[thm]{Fact}
\newtheorem{question}[thm]{Question}
\newcommand{\dminus}{ 
\buildrel\textstyle\ .\over{\hbox{ 
\vrule height3pt depth0pt width0pt}{\smash-} 
}}
\def \u{\mathcal U}
\newcommand{\cstar}{$\mathrm{C}^*$}
\def \val{\operatorname{val}}
\def \ab{\operatorname{ab}}
\def\indsym#1#2{%
  \setbox0=\hbox{$\m@th#1x$}%
  \kern\wd0%
  \hbox to 0pt{\hss$\m@th#1\mid$\hbox to 0pt{$\m@th#1^{#2}$}\hss}%
  \lower.9\ht0\hbox to 0pt{\hss$\m@th#1\smile$\hss}%
  \kern\wd0}
\def\nindsym#1#2{%
  \setbox0=\hbox{$\m@th#1x$}%
  \kern\wd0%
  \hbox to 0pt{\hss$\m@th#1\not$\kern1.4\wd0\hss}
  \hbox to 0pt{\hss$\m@th#1\mid$\hbox to 0pt{$\m@th#1^{\,#2}$}\hss}%
  \lower.9\ht0\hbox to 0pt{\hss$\m@th#1\smile$\hss}%
  \kern\wd0}
\def\dotminussym#1#2{%
  \setbox0=\hbox{$\m@th#1-$}%
  \kern.5\wd0%
  \hbox to 0pt{\hss\hbox{$\m@th#1-$}\hss}%
  \raise.6\ht0\hbox to 0pt{\hss$\m@th#1.$\hss}%
  \kern.5\wd0}
\newcommand{\dotminus}{\mathbin{\mathpalette\dotminussym{}}}
\def \Th{\operatorname{Th}}
\def \val{\operatorname{val}}
\def \id{\operatorname{id}}
\newcommand{\cqs}{C_q}
\newcommand{\cqa}{C_{qa}}
\newcommand{\cqc}{C_{qc}}
\title{On definability of \cstar-tensor norms}
\author{Isaac Goldbring}
\thanks{I. Goldbring was partially supported by NSF grant DMS-2054477. }
\address{Department of Mathematics\\University of California, Irvine, 340 Rowland Hall (Bldg.\# 400),
Irvine, CA 92697-3875}
\email{isaac@math.uci.edu}
\urladdr{http://www.math.uci.edu/~isaac}
\author{Thomas Sinclair}
\thanks{T.~Sinclair was partially supported by NSF grant DMS-2055155.}
\address{Mathematics Department, Purdue University, 150 N. University Street, West Lafayette, IN 47907-2067}
\email{tsincla@purdue.edu}
\urladdr{http://www.math.purdue.edu/~tsincla/}
\begin{document}

\begin{abstract}
We initiate the study of definability (in the model-theoretic sense) of \cstar-tensor norms.  We show that neither the minimal nor maximal tensor norms are definable uniformly over all \cstar-algebras.  The proof in the case of the minimal tensor product norm uses a deep theorem of Kirchberg characterizing exactness in terms of tensor products with matrix ultraproducts while the case of maximal tensor products uses Pisier's recent characterization of the lifting property in terms of maximal tensor products and ultraproducts.  We also study the question of when one of these tensor products can be definable in a particular \cstar-algebra.  We establish some negative results along these lines for particular \cstar-algebras and when the definability condition is strengthened to be computable and of a restricted quantifier-complexity; these results use the quantum complexity results MIP$^*$=RE and MIP$^{co}=$coRE.  As a byproduct of our arguments, we answer a question of Fritz, Netzer, and Thom by showing that the norm on $C^*(\mathbb{F}_n\times \mathbb{F}_n)$ is not computable for any $n\in \{2,3,\ldots,\infty\}$.
\end{abstract}

\maketitle

\section{Introduction}

One of the more interesting aspects of \cstar-algebra theory is the problem of defining the tensor product of \cstar-algebras.  Given \cstar-algebras $A$ and $B$, their so-called \emph{algebraic tensor product} $A\odot B$ is simply their tensor product as vector spaces.  It is quite easy to see that $A\odot B$ can be equipped with a $*$-algebra structure.  The question becomes:  how can one equip $A\odot B$ with a norm such that its completion with respect to this norm is a \cstar-algebra?  In general, there can be many ways of accomplishing this task, that is, there can be norms on $A\odot B$ whose respective completions are nonisomorphic \cstar-algebras.  For example, a result of Junge and Pisier \cite{jungepisier} is that the algebraic tensor product $B(H)\odot B(H)$ admits a continuum of \cstar-norms whose completions are nonisomorphic \cstar-algebras.

That being said, there are always two canonical norms on $A\odot B$, the so-called \emph{minimal tensor norm} $\|\cdot\|_{\min}$ and \emph{maximal tensor norm} $\|\cdot\|_{\max}$.  As the names indicate, they are indeed the least and greatest norms on $A\odot B$ whose completions are \cstar-algebras.  Their precise definitions will be recalled in the next section, but for now let us remark that they are both defined in terms of representations of the algebras; in other words, their definitions are ``extrinsic.''  One might wonder if it possible to give a more ``intrinsic'' definition of these norms, that is, formulae for these norms that are ``definable'' in terms of the algebra structures on $A$ and $B$ themselves.  To prevent overcomplication, we mainly study this question under the assumption that $A=B$.

Presumably the answer to the previous inquiry, in general, is:  no!  Indeed, if there were such definitions, then they would be written down in every \cstar-algebra textbook.  But the problem with verifying this seemingly obvious attitude is:  what exactly does one mean by ``definable''?  One way of making this idea precise  is to use the language of model theory and that is the approach taken in the current paper.  Roughly speaking, to say that the tensor norm is definable is to say that it can be given by a formula in the natural first-order language for studying \cstar-algebras.  A more general and precise definition of definability can be found in the next section.

Before describing our main results, we first mention that, for $\|\cdot\|_\alpha$ for $\alpha\in\{\min,\max\}$, there are actually two different questions one can ask concerning their definability:  (1) Is the norm $\|\cdot\|_\alpha$ on $A\odot A$ definable uniformly across all \cstar-algebras $A$?  (2)  Given a particular \cstar-algebra $A$, is the norm $\|\cdot\|_\alpha$ on $A\odot A$ definable?  We refer to these questions as the ``global'' and ''local'' questions respectively.

We begin by discussing our results in the global case.  For each of the two choices for $\alpha$, the answer is indeed negative:

\begin{theorem}
For $\alpha\in\{\min,\max\}$, the norm $\|\cdot\|_\alpha$ is not uniformly definable across all \cstar-algebras.
\end{theorem}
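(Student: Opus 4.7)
The plan is to invoke the standard model-theoretic principle that uniformly definable predicates are preserved by ultraproducts. Assume for contradiction that the $\alpha$-tensor norm on $A \odot A$ is uniformly definable across all \cstar-algebras $A$. Then for each $n \geq 1$ the function $(x_1,\ldots,x_n,y_1,\ldots,y_n) \mapsto \|\sum_k x_k \otimes y_k\|_\alpha$ is a definable predicate uniformly in $A$, so the continuous version of {\L}o\'s's theorem yields
\[
\left\|\sum_k [x_{k,i}]_\mathcal{U} \otimes [y_{k,i}]_\mathcal{U}\right\|_\alpha \;=\; \lim_{i\to\mathcal{U}} \left\|\sum_k x_{k,i} \otimes y_{k,i}\right\|_\alpha
\]
for every sequence $(A_i)$ of \cstar-algebras, every free ultrafilter $\mathcal{U}$ on $\mathbb{N}$, and every choice of representatives (the left-hand norm being computed in $(\prod_\mathcal{U} A_i)\odot(\prod_\mathcal{U} A_i)$ and each right-hand norm in $A_i \odot A_i$).

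For $\alpha=\min$, I would reduce to the standard ``asymmetric'' tensor-ultraproduct commutation as follows. Fix a \cstar-algebra $A$ and a sequence $(B_i)$, and set $A_i := A \oplus B_i$, so that $A$ and $B_i$ sit as orthogonal \cstar-subalgebras of $A_i$. Invoking the injectivity of $\otimes_{\min}$ under subalgebra inclusions, the displayed identity specialises to
\[
\left\|\sum_k a_k \otimes [b_{k,i}]_\mathcal{U}\right\|_{\min}^{A \otimes_{\min} \prod_\mathcal{U}\! B_i} = \lim_{i\to\mathcal{U}} \left\|\sum_k a_k \otimes b_{k,i}\right\|_{\min}^{A \otimes_{\min} B_i},
\]
which is precisely the statement that the canonical $*$-homomorphism $A \otimes_{\min} \prod_\mathcal{U} B_i \to \prod_\mathcal{U}(A \otimes_{\min} B_i)$ is isometric. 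Taking $B_i = M_{n_i}(\mathbb{C})$, Kirchberg's characterisation of exactness via matrix ultraproducts would then force $A$ to be exact. Since there exist non-exact \cstar-algebras (for instance $B(\ell^2)$ or $C^*(\mathbb{F}_2)$), uniform definability of $\|\cdot\|_{\min}$ fails.

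For $\alpha=\max$, the strategy is analogous, using Pisier's recent theorem that commutation of $\otimes_{\max}$ with ultraproducts characterises the lifting property: uniform definability would force every \cstar-algebra to have LP, contradicting the existence of \cstar-algebras that fail LP. The main obstacle here is that $\otimes_{\max}$ is not injective under subalgebra inclusions, so the direct-sum reduction used for $\otimes_{\min}$ is unavailable; one must instead arrange the sequence $(A_i)$ so that the ``diagonal'' commutation on $A_i \odot A_i$ produced by definability specialises to the precise form of ultraproduct-commutation appearing in Pisier's hypothesis, for example by choosing the $A_i$ as free products containing a fixed copy of $A$, or by exploiting projectivity of $\otimes_{\max}$ under quotients to unfold the diagonal commutation into a two-sided one. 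Matching these commutation statements precisely is, I expect, the most delicate technical step of the argument.
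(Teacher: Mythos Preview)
Your treatment of the $\min$ case is correct and in fact streamlines the paper's argument. The paper works entirely with the ``symmetric'' property $A^\mathcal{U}\otimes_{\min}A^\mathcal{U}\hookrightarrow(A\otimes_{\min}A)^\mathcal{U}$ and, to reduce to Kirchberg's criterion, embeds the matrix ultraproduct into $A^\mathcal{U}$ via ucp maps with conditional expectation (which requires a side argument that $A$ is not subhomogeneous). Your direct-sum trick $A_i=A\oplus M_{n_i}$ bypasses all of this: since $\otimes_{\min}$ distributes over finite direct sums and is injective on subalgebras, the ultraproduct identity produced by definability restricts immediately to the asymmetric statement $A\otimes_{\min}\prod_\mathcal{U}M_{n_i}\hookrightarrow\prod_\mathcal{U}M_{n_i}(A)$, which is precisely Kirchberg's hypothesis. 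This is a genuinely different and shorter route to the same conclusion.

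Your $\max$ case, however, is not a proof; you correctly flag the obstacle but do not overcome it. Neither ``free products containing a fixed copy of $A$'' nor ``projectivity under quotients'' by themselves yield Pisier's hypothesis from the diagonal commutation. What the paper does is construct a single \cstar-algebra $A$ simultaneously satisfying three properties: $A$ is surjectively universal, $A\cong A\times A$, and $A$ fails the LP (concretely, $A=\ell^\infty_{n\in\mathbb{N}}\bigl(C^*(\mathbb{F}_\infty)*C^*(G)\bigr)$ for a countable $G$ whose full group \cstar-algebra lacks LP). The isomorphism $A\cong A\times A$ lets one identify $\prod_\mathcal{U}A_J$ (over finite $J\subseteq\mathbb{N}$) with $A^\mathcal{U}$; the fact that the diagonal inclusion $A\hookrightarrow A^\mathcal{U}$ is relatively weakly injective is what substitutes for the missing injectivity of $\otimes_{\max}$, giving $A^\mathcal{U}\otimes_{\max}A\subseteq A^\mathcal{U}\otimes_{\max}A^\mathcal{U}$; and property $\max$-$\mathcal{U}$ then embeds this into $(A\otimes_{\max}A)^\mathcal{U}$. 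Unwinding the isomorphisms recovers exactly the inequality in Pisier's characterisation for the family $(A_J)_J$, and surjective universality of each $A_J$ is what makes that single family sufficient to conclude LP. You would need to supply all of these ingredients, or a substitute for them, to close the gap.
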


To prove this theorem, we introduce the following notion:  given a \cstar-algebra $A$, an ultrafilter $\u$, and $\alpha\in\{\min,\max\}$, we say that $A$ has property $\alpha$-$\u$ if there is a natural isometric embedding $A^\u\otimes_\alpha A^\u\hookrightarrow (A\otimes_\alpha A)^\u$.  (A more precise version of this definition will be given in Section 3 below.)  We show that if the norm $\|\cdot\|_\alpha$ is uniformly definable across all \cstar-algebras, then each \cstar-algebra $A$ has property $\alpha$-$\u$.  On the other hand, we prove the following theorem:

\begin{theorem}
For each $\alpha\in \{\min,\max\}$, there are \cstar-algebras without property $\alpha$-$\u$.
\end{theorem}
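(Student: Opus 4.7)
The plan is to handle the two cases $\alpha=\min$ and $\alpha=\max$ separately, in each case invoking the respective ultraproduct characterizations of an approximation property: Kirchberg's theorem on exactness for the minimal case, and Pisier's recent characterization of the lifting property for the maximal case.

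For $\alpha=\min$ I would take $A$ to be a non-exact \cstar-algebra that contains unital copies of every matrix algebra $M_n$; the canonical choice is $A=B(H)$ for $H$ a separable infinite-dimensional Hilbert space. Assuming for contradiction that property $\min$-$\u$ holds, the natural map $A^\u \otimes_{\min} A^\u \hookrightarrow (A \otimes_{\min} A)^\u$ is isometric. Exploiting the injectivity of $\otimes_{\min}$, the diagonal embedding $A \hookrightarrow A^\u$ together with the matrix embeddings $M_n \hookrightarrow A$ produce compatible isometric inclusions $A \otimes_{\min} \prod_\u M_n \hookrightarrow A^\u \otimes_{\min} A^\u$ on the source and $\prod_\u (A \otimes_{\min} M_n) \hookrightarrow (A \otimes_{\min} A)^\u$ on the target. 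A short diagram chase then forces the natural comparison map $A \otimes_{\min} \prod_\u M_n \to \prod_\u (A \otimes_{\min} M_n)$ to be isometric for every $\u$, which by Kirchberg's characterization means $A$ is exact, contradicting the choice of $A$.

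For $\alpha=\max$ the strategy is parallel: pick $A$ without the LLP and derive a contradiction with Pisier's characterization. Natural candidates are $B(H)$ (which has WEP but not LLP) or one of the free-product-type algebras such as $C^*(\mathbb{F}_\infty \times \mathbb{F}_\infty)$ highlighted in the abstract. The expected shape of the argument is that property $\max$-$\u$ for $A$ implies that the specific max-tensor/ultraproduct comparison map appearing in Pisier's theorem is isometric, thereby forcing $A$ to have LLP.

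The principal obstacle in the $\max$ case is that $\otimes_{\max}$ is not injective, so one cannot simply restrict along subalgebra inclusions and cut down to matrix subalgebras as in the $\min$ case. Instead one must either leverage additional structure of the chosen $A$ (for example, a ucp retraction or conditional-expectation-type map onto a relevant subalgebra that respects $\otimes_{\max}$) or apply Pisier's theorem in a form that takes the hypothesis $A^\u \otimes_{\max} A^\u \hookrightarrow (A \otimes_{\max} A)^\u$ directly as input. Identifying the right $A$ together with the right formulation of Pisier's theorem is the crux of this half of the argument.
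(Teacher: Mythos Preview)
Your treatment of the $\min$ case is correct and, for the specific choice $A=B(H)$, arguably cleaner than the paper's: you use genuine unital embeddings $M_n\hookrightarrow A$ and the injectivity of $\otimes_{\min}$ to get the commuting square directly. The paper instead proves the stronger statement that \emph{every} non-exact $A$ fails property $\min$-$\u$; since a general non-subhomogeneous $A$ need not contain $M_n$ unitally, it replaces your embeddings by ucp maps $\psi_n:M_n\to A$ admitting ucp left inverses $\rho_n:A\to M_n$ (so that $(\rho_n)_\u\circ(\psi_n)_\u=\id$), which is where the extra work in Proposition~\ref{minexact} comes from. For the theorem as stated, your special case suffices.

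The $\max$ case, however, is not a proof but a plan with its hardest step left open: you correctly isolate the obstacle (non-injectivity of $\otimes_{\max}$) and then say that ``identifying the right $A$ together with the right formulation of Pisier's theorem is the crux.'' That crux is exactly what the paper supplies, and neither of your candidate algebras fits. Whether $C^*(\mathbb{F}_\infty\times\mathbb{F}_\infty)$ has the LP is a well-known open question of Ozawa, so it cannot serve as a counterexample; and $B(H)$ is neither surjectively universal nor satisfies the structural self-similarity the argument needs. Also note that Pisier's theorem characterizes the LP, not the LLP.

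The paper's resolution is to demand three properties of $A$ simultaneously: (i) $A$ is surjectively universal, (ii) $A\cong A\times A$, and (iii) $A$ fails LP. Property (ii) gives isomorphisms $\psi_J:A_J:=\ell^\infty(J,A)\to A$ for every finite $J$, so that $\prod_\u A_J\cong A^\u$; property (i) feeds into the precise form of Pisier's theorem needed (the $D_n$'s must be surjectively universal). Then, using that $A$ is r.w.i.\ in $A^\u$, one chains
\[
\Bigl(\prod_\u A_J\Bigr)\otimes_{\max}A\;\cong\;A^\u\otimes_{\max}A\;\subseteq\;A^\u\otimes_{\max}A^\u\;\subseteq\;(A\otimes_{\max}A)^\u\;\cong\;\prod_\u(A_J\otimes_{\max}A),
\]
where the second inclusion is exactly property $\max$-$\u$. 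This forces the Pisier comparison map to be isometric, hence $A$ has LP, contradicting (iii). Constructing such an $A$ is itself nontrivial: the paper takes $A=\ell^\infty\bigl(C^*(\mathbb{F}_\infty)*C^*(G)\bigr)$ for a countable $G$ with $C^*(G)$ failing LP, checking each of (i)--(iii) by hand. None of this is visible from your outline; without it, the $\max$ half remains unproved.
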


In the case of the minimal tensor product, using a deep theorem of Kirchberg, we show that if $A$ is not exact, then $A$ does not have property $\min$-$\u$ for any nonprincipal ultrafilter $\u$ on $\mathbb{N}$.  Meanwhile, the examples of \cstar-algebras without property $\max$-$\u$ use Pisier's recent ultraproduct characterization of the lifting property \cite{pisier}.

On the other hand, if one restricts to $n$-subhomogeneous \cstar-algebras for a fixed $n$, then the minimal tensor product is uniformly definable:

\begin{theorem}
For a fixed positive integer $n$, $\|\cdot\|_{\min}$ is uniformly definable over all $n$-subhomogeneous \cstar-algebras.
\end{theorem}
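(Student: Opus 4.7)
The strategy is to exploit two facts: the class of $n$-subhomogeneous \cstar-algebras is (universally) axiomatizable in the natural first-order language, and on this class the minimal tensor norm is determined by representations of dimension at most $n$. Axiomatizability is immediate from the Amitsur--Levitzki theorem: $A$ is $n$-subhomogeneous iff the standard polynomial $s_{2n}(x_1,\ldots,x_{2n})$ vanishes identically on $A$, a universal condition in the language of \cstar-algebras. Moreover, for $n$-subhomogeneous $A$ every irreducible representation factors through $M_k$ for some $k \leq n$, whence
\[
\|x\|_{\min} = \sup_{\pi,\rho}\,\Big\|\sum_{i=1}^m \pi(a_i) \otimes \rho(b_i)\Big\|_{M_{n^2}}
\]
for $x = \sum_{i=1}^m a_i \otimes b_i \in A \odot A$, the supremum ranging over all (not necessarily unital, irreducible, or injective) *-homomorphisms $\pi,\rho \colon A \to M_n$.

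To turn this supremum into a first-order formula uniform across the class, one parametrizes *-homomorphisms $A \to M_n$ by first-order data. Via the Choi--Stinespring correspondence, a UCP map $\pi \colon A \to M_n$ is encoded by a state on $M_n(A)$, and the multiplicativity of $\pi$ (which distinguishes *-homomorphisms from general UCP maps) is a concrete polynomial condition on this state. Combined with the well-known definability, uniform in $A$, of the operator norm on $M_k(A)$ for each fixed $k$, this recasts the above supremum in purely first-order terms.

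The main obstacle is making the parametrization step precise: strictly speaking one must bridge the sup over \emph{exact} *-homomorphisms $A\to M_n$ with a sup over tuples satisfying first-order (hence at best approximate) versions of the *-homomorphism axioms. The expected input is a Kadison--Kastler-style stability theorem for representations into $M_n$, guaranteeing that any approximate *-homomorphism from an $n$-subhomogeneous \cstar-algebra into $M_n$ is uniformly close to a genuine one, with modulus depending only on $n$. Together with axiomatizability and the uniform definability of matrix norms, this would produce the desired formula for $\|\cdot\|_{\min}$ valid over the class of all $n$-subhomogeneous \cstar-algebras.
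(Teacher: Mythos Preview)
Your approach is genuinely different from the paper's, which proceeds indirectly via the Beth Definability Theorem: rather than writing down any formula, the paper reduces definability to checking that $P^{\min}_{\prod_\u A_i, m} = \lim_\u P^{\min}_{A_i, m}$ for every family of $n$-subhomogeneous algebras, and then verifies this by embedding each separable $A_i$ into a single algebra $M_N(C(2^\omega))$ and reducing to the abelian case (where definability of the joint spectrum is invoked).

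Your strategy of expressing the norm as a supremum over representations into $M_n$ is correct at the operator-algebraic level, but the parametrization step has a real gap that the stability issue you flag does not fully account for. The Choi correspondence encodes a UCP map $A\to M_n$ as a state on $M_n(A)$, but states on $M_n(A)$ are linear functionals, not elements of any sort in the standard first-order language for \cstar-algebras; one cannot quantify over them, and the definability of the operator norm on $M_k(A)$ gives no access to individual states. A cleaner realization of your idea is to quantify over tuples $(X_1,\ldots,X_m)\in (M_n)^m$ lying in the ``joint $n$-dimensional representation range'' $\{(\pi(a_1),\ldots,\pi(a_m)):\pi\text{ a }*\text{-homomorphism }C^*(\vec a)\to M_n\}$, in direct analogy with the joint spectrum used in the paper's abelian case; this set is cut out by the universal condition $\|p(\vec X)\|\le \|p(\vec a)\|$ for all $*$-polynomials $p$. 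Your Kadison--Kastler-type stability would then be exactly what is needed to make this a definable family, but you neither prove nor cite such a result uniform over all $n$-subhomogeneous $A$, so the argument remains conditional. The paper's Beth-definability route sidesteps all of this: it never needs stability of approximate representations, only the structural fact that separable $n$-subhomogeneous algebras embed into $M_N(C(2^\omega))$.
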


The proof of this theorem uses, amongst other things, the Beth Definability Theorem from model theory together with the fact that all separable subhomogeneous \cstar-algebras are subalgebras of matrix algebras over commutative \cstar-algebras.

We now turn to the ``local'' situation.  Unfortunately, our results in this case are only partial as showing that $\|\cdot\|_\alpha$ is not definable when restricted to a particular \cstar-algebra seems to be a much more delicate problem.  In order to explain our partial results, say that $\|\cdot\|_\alpha$ is \emph{explicitly definable} for a given \cstar-algebra $A$ if there is an ``effective'' mapping $(n,\epsilon)\mapsto \varphi_{n,\epsilon}(\vec x,\vec y)$, where $\varphi_{n,\epsilon}$ is a formula in $2n$-variables, for which, given any $\vec a,\vec b\in A^n$, one has 
$$\left|\left\|\sum_{k=1}^n a_k\otimes b_k\right\|_{A
\otimes_\alpha A}- \varphi_{n,\epsilon}(\vec a,\vec b)^A\right|<\epsilon.$$ 

Given this definition, we can now state our local results:

\begin{theorem}
Suppose that $A$ is a \emph{locally universal} \cstar-algebra.  Then $\|\cdot\|_{\min}$ is not explicitly definable in $A$.  If, in addition, $A$ has Kirchberg's QWEP property, then $\|\cdot\|_{\max}$ is not explicitly definable in $A$.
\end{theorem}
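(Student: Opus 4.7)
The plan is to derive, from explicit definability of $\|\cdot\|_\alpha$ in $A$, a computable approximation scheme for the $\alpha$-tensor norm on $C^*(\mathbb{F}_n) \odot C^*(\mathbb{F}_n)$, contradicting the non-computability results alluded to in the abstract: MIP$^*$ = RE in the min case, and the paper's own answer to the Fritz--Netzer--Thom question (derived from MIP$^{co}$ = coRE) in the max case.

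Concretely, I would begin by unwinding the definition: explicit definability of $\|\cdot\|_\alpha$ in $A$ gives a computable assignment $(n,\epsilon)\mapsto \varphi_{n,\epsilon}$ of formulas so that $\varphi_{n,\epsilon}(\vec a,\vec b)^A$ is within $\epsilon$ of $\|\sum a_k\otimes b_k\|_{A\otimes_\alpha A}$ for all tuples $\vec a,\vec b\in A^n$. Since $A$ is locally universal, fix an embedding $\iota:C^*(\mathbb{F}_n)\hookrightarrow A$ sending the canonical unitary generators $u_1,\ldots,u_n$ to elements $a_1,\ldots,a_n\in A$. For the min case, injectivity of $\|\cdot\|_{\min}$ immediately yields
$$\left\|\sum p_k\otimes q_k\right\|_{C^*(\mathbb{F}_n)\otimes_{\min}C^*(\mathbb{F}_n)} = \left\|\sum \iota(p_k)\otimes \iota(q_k)\right\|_{A\otimes_{\min}A}$$
for any $*$-polynomials $p_k,q_k$ in the generators, so the $\varphi_{n,\epsilon}$ transfer to an effective approximation scheme for $\|\cdot\|_{\min}$ on $C^*(\mathbb{F}_n)\odot C^*(\mathbb{F}_n)$; this contradicts non-computability of the min tensor norm coming from MIP$^*$ = RE. The evaluation step uses that $C^*(\mathbb{F}_n)$ has a computable presentation (by residual finite-dimensionality), so interpreting each effectively presented formula at the symbolic generators produces a computable real.

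The max case requires the QWEP hypothesis precisely because $\|\cdot\|_{\max}$ is not injective. Here I would invoke a Kirchberg-style tensor product identification: since $C^*(\mathbb{F}_n)$ has the (local) lifting property and $A$ is QWEP, the inclusion $C^*(\mathbb{F}_n)\odot C^*(\mathbb{F}_n)\hookrightarrow A\odot A$ is isometric for $\|\cdot\|_{\max}$. Once this isometry is in hand, the same transfer argument produces a computable approximation to the norm on $C^*(\mathbb{F}_n)\otimes_{\max}C^*(\mathbb{F}_n)=C^*(\mathbb{F}_n\times\mathbb{F}_n)$, contradicting the paper's own result that this norm is not computable. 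I expect the main obstacle to be verifying this max-norm transfer in the QWEP setting with the full precision needed: one must combine QWEP on $A$ with LLP on $C^*(\mathbb{F}_n)$ to rule out a max-norm collapse across the embedding, whereas the min case is immediate. A secondary issue is checking that effectiveness of the formula sequence really does yield computable values when evaluated on the symbolic free-unitary generators, but this follows routinely from the computability of the norm on $C^*(\mathbb{F}_n)$.
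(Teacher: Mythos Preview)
Your proposal has two genuine gaps that prevent it from going through.

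First, ``locally universal'' means that every \cstar-algebra embeds into an \emph{ultrapower} $A^\u$, not into $A$ itself; your sentence ``fix an embedding $\iota:C^*(\mathbb{F}_n)\hookrightarrow A$'' is not justified.  If you try to repair this by embedding into $A^\u$, you face the problem that the approximation $|\varphi_{n,\epsilon}^A-P^{\min}_{A,n}|<\epsilon$ only transfers (via \L o\'s) to an approximation of $\lim_\u P^{\min}_{A,n}$ in $A^\u$, and this ultralimit need not equal $P^{\min}_{A^\u,n}$; indeed, the paper shows that equality (``property min-$\u$'') fails precisely for non-exact algebras such as locally universal ones.

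Second, and more seriously, your evaluation step is circular.  You write that ``interpreting each effectively presented formula at the symbolic generators produces a computable real'' because $C^*(\mathbb{F}_n)$ has a computable presentation.  But the formula $\varphi_{n,\epsilon}$ must be evaluated in $A$, not in the subalgebra $C^*(\mathbb{F}_n)$: unless $\varphi_{n,\epsilon}$ is quantifier-free, its value on a tuple from $C^*(\mathbb{F}_n)$ depends on which ambient algebra you compute it in, and $A$ is not assumed to have any computable presentation.  The paper circumvents this entirely: it restricts to \emph{universal} formulae $\varphi_n$ and exploits local universality in a different way, namely that for any universal sentence $\sigma$ one has $\sigma^A=\inf\{q\in\mathbb{Q}^{>0}:T\vdash \sigma\dotminus q\}$ where $T$ is the theory of \cstar-algebras.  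Upper bounds on $\sigma^A$ are then obtained by \emph{running proofs} from $T$, not by any presentation of $A$ or a subalgebra.  Combined with the brute-force lower bound for $\val^*(\frak G)$, this decides the gapped promise problem, contradicting MIP$^*$=RE.  In the max case the paper uses that when $(A,A)$ is a Tsirelson pair (e.g.\ $A$ QWEP), $\val^*(\frak G)$ is still expressed via $P^{\max}$, so the same proof-search mechanism applies; your proposed LLP/QWEP isometry argument is not the route taken and would in any case inherit the evaluation problem above.
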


Here, a \cstar-algebra is locally universal if every \cstar-algebra embeds in an ultrapower of $A$.  We note that every \emph{existentially closed} \cstar-algebra (see \cite{KEP}) is both locally universal and has the QWEP (in fact, the WEP), whence satisfies the hypotheses of the above theorem.  We note also that the hypothesis of the second part of the above theorem can be weakened to assuming that $(A,A)$ is a Tsirelson pair in the sense introduced by the first author and Hart in \cite{tsirelson}.

The previous theorem was motivated by \emph{Kirchberg's embedding problem (KEP)}, which asks if the Cuntz algebra $\mathcal{O}_2$ is locally universal.  Thus, if one can show that $\|\cdot\|_{\min}$ (which coincides with $\|\cdot\|_{\max}$ since $\mathcal{O}_2$ is nuclear) is explicitly definable in $\mathcal{O}_2$, then one can give a negative solution to the Kirchberg embedding problem.

The previous theorem is established using the landmark result in quantum complexity theory known as MIP*=RE \cite{MIP*}.  The proof will actually show that, in the above definition, we only need the defining property of $\|\cdot\|_{\min}$ or $\|\cdot\|_{\max}$ being explicitly definable to hold true for $\epsilon=1/8$.

We can give one further example of a class of \cstar-algebras for which $\|\cdot\|_{\max}$ is not explicitly definable.  The theorem uses the notion of a \emph{qc-full} \cstar-algebra, which will be defined in Section 4 below using notions from quantum complexity theory.

\begin{theorem}
Suppose that $A$ is a qc-full \cstar algebra that $A$ admits a \emph{computable presentation}.  Then $\|\cdot\|_{\max}$ is not explicitly definable in $A$.  
\end{theorem}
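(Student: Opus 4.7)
The approach is a proof by contradiction using the complexity-theoretic result MIP$^{co}=$coRE. Suppose, toward a contradiction, that $A$ is qc-full, admits a computable presentation, and that $\|\cdot\|_{\max}$ is explicitly definable in $A$ via a computable assignment $(n,\epsilon)\mapsto\varphi_{n,\epsilon}$.

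The first step is to combine the computable presentation of $A$ with the computable family of defining formulas to observe that for any tuple of computable elements $\vec a,\vec b\in A^n$ and any rational $\epsilon>0$, one can effectively produce a rational number within $2\epsilon$ of $\bigl\|\sum_{k=1}^n a_k\otimes b_k\bigr\|_{\max}$: namely, evaluate $\varphi_{n,\epsilon}(\vec a,\vec b)^A$ to precision $\epsilon$ using the computable structure on $A$. In particular, the max-tensor norm, restricted to tuples of computable inputs, is itself a uniformly computable real number.

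The second step is to invoke the qc-fullness hypothesis, which---by the definition to be given in Section~4 in terms of nonlocal games and commuting-operator strategies---should provide a computable map $G\mapsto(\vec a_G,\vec b_G)$ taking a (coded) nonlocal game $G$ to a computable tuple of elements of $A$ for which the max-norm $\bigl\|\sum_k (a_G)_k\otimes (b_G)_k\bigr\|_{\max}$ determines the commuting-operator value $\omega^{co}(G)$ up to an affine, computable transformation. Composing with the first step yields a uniformly computable approximation algorithm for $\omega^{co}(G)$, which in turn gives decidability of the gap problem ``$\omega^{co}(G)\geq 1-\delta$ vs. $\omega^{co}(G)\leq\tfrac{1}{2}$'' for an appropriate fixed $\delta$.

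Finally, MIP$^{co}=$coRE asserts precisely that this gap problem is coRE-hard; since a coRE-hard problem cannot be decidable (else coRE $\subseteq$ RE, contradicting the unsolvability of the halting problem), we obtain the desired contradiction. The main obstacle is isolating the precise computable content of qc-fullness so that the reduction from nonlocal games to norm computations in $A\odot A$ is effective; once this is in place, the argument parallels the proof of the preceding local theorem (which used MIP$^*=$RE together with local universality plus QWEP) and should likewise show that explicit definability need only be assumed at some fixed $\epsilon$ such as $1/8$.
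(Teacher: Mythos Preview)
Your overall strategy---deriving a decision procedure for the $\val^{co}$ gap problem and contradicting MIP$^{co}=$coRE---is the same as the paper's.  However, your second step contains a genuine misunderstanding of what qc-fullness delivers.  It does \emph{not} furnish a computable map $G\mapsto(\vec a_G,\vec b_G)$ to a single tuple whose max-norm recovers $\val^{co}(G)$.  What qc-fullness gives (via Lemma~\ref{gamevalues}) is the identity
\[
\val^{co}(\frak G)=\sup\bigl\{P^{\max}_{C,k^2n^2}\bigl(\pi(x,y)D(x,y,a,b)A^x_a,\,B^y_b\bigr) : A^x,B^y \text{ POVMs in }C\bigr\},
\]
a supremum over \emph{all} POVMs in $C$, not a single norm computation.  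The algorithmic content must therefore come from a \emph{search} over tuples of generated points of the computable presentation that form $\gamma$-approximate POVMs (for computably small $\gamma$), combined with the POVM stability function $\Delta_n$ from \cite{KEPJFA} to ensure such approximate POVMs are close to genuine ones.  This search can only produce \emph{lower} bounds on $\val^{co}(\frak G)$.

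Relatedly, your first step is too optimistic: from a computable presentation one cannot in general compute $\varphi^C(\vec a)$ for an arbitrary formula $\varphi$, since quantifiers obstruct two-sided approximation.  The paper's actual hypothesis (``weakly explicit maximal tensor norm'') requires the approximating formulae $\varphi_n$ to be \emph{universal restricted}; this is precisely what allows computing \emph{lower} bounds for $\varphi_n^C$ on generated points, feeding the search above.  Since the search only certifies $\val^{co}(\frak G)>1/2$ (hence $=1$), the decision procedure must be completed on the other side by the known semidefinite-programming algorithm that outputs computable \emph{upper} bounds converging to $\val^{co}(\frak G)$; you omit this half.  Once these pieces are assembled the $\epsilon=1/8$ refinement you anticipate does go through, but not via full computability of $\val^{co}$ as you propose---only via the combination of the one-sided POVM search with the one-sided SDP bound.
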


Here, when we say that $A$ has a computable presentation, we mean that, roughly speaking, there is some countable sequence $(a_n)_{n\in \mathbb{N}}$ from $A$ that generates a dense $*$-subalgebra of $A$ and for which the norm on $*$-polynomials of the $a_n$'s is effectively approximable; a more precise definition can be found in Section 5.  

An example of a \cstar-algebra satisfying both assumptions of the previous theorem is $C^*(\mathbb{F}_n)$, for $n\in \{2,3,\ldots,\infty\}$, whence we obtain:

\begin{corollary}
The norm $\|\cdot\|_{\max}$ is not explicitly definable in $C^*(\mathbb{F}_n)$.
\end{corollary}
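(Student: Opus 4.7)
The plan is to deduce the corollary directly from the preceding theorem by verifying that, for each $n\in\{2,3,\ldots,\infty\}$, the C*-algebra $C^*(\mathbb{F}_n)$ satisfies both hypotheses: the existence of a computable presentation and qc-fullness.

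First I would establish the computable presentation. The natural countable generating sequence is the family of free unitary generators $u_1,u_2,\ldots$ of $\mathbb{F}_n$, which generate a dense $*$-subalgebra (the group algebra) of $C^*(\mathbb{F}_n)$. To effectively approximate the norm of an arbitrary $*$-polynomial $p$ in these generators, I would combine two computably enumerable approximations that trap the true value. Lower bounds on $\|p\|$ are obtained by evaluating $p$ in finite-dimensional unitary representations and computing operator norms of matrices with rational entries; this gives the true value in the limit by residual finite-dimensionality of $C^*(\mathbb{F}_n)$ (a classical theorem of Choi). Upper bounds are obtained from Positivstellensatz-type certificates in the free $*$-algebra on $n$ unitaries: any rational $c$ with $\|p\|<c$ admits a decomposition of $c^2\cdot 1-p^*p$ as a sum of hermitian squares modulo the relations $u_i^*u_i=u_iu_i^*=1$, and such certificates are recursively enumerable. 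Interleaving the two enumerations yields the required computable presentation.

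Next I would verify qc-fullness. This should follow from the universal property of $C^*(\mathbb{F}_n)$: any two commuting families of unitaries in a C*-algebra $B$ arise from a pair of commuting $*$-homomorphisms $C^*(\mathbb{F}_n)\to B$, equivalently from a single $*$-homomorphism $C^*(\mathbb{F}_n)\otimes_{\max} C^*(\mathbb{F}_n)=C^*(\mathbb{F}_n\times\mathbb{F}_n)\to B$. Consequently, every qc-correlation built from commuting POVMs, together with its witnessing state, is realized by a state on $C^*(\mathbb{F}_n)\otimes_{\max}C^*(\mathbb{F}_n)$ pulled back through $C^*(\mathbb{F}_n)$. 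Whatever the precise formulation of qc-fullness given in Section~4, this universality of $C^*(\mathbb{F}_n)$ for encoding quantum-commuting strategies is exactly the feature that it captures.

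Combining these two verifications with the previous theorem concludes the proof. I expect the computable-presentation step to be the more routine of the two, given the standard RFD-plus-Positivstellensatz toolkit available for $C^*(\mathbb{F}_n)$. The main obstacle will be matching the precise formalism for qc-fullness introduced in Section~4 — in particular checking the ``fullness'' aspect that \emph{all} qc-correlations (and not merely some of them) are accounted for — but this should reduce cleanly to the universal property just described.
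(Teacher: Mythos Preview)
Your computable-presentation argument is fine and is essentially what the paper cites (the Fritz--Netzer--Thom result, with the extension to $n=\infty$ handled just as you indicate).

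The qc-fullness step, however, has a real gap for finite $n$. The universal property of $C^*(\mathbb{F}_n)$ you invoke classifies $n$-tuples of unitaries, whereas a qc-correlation in $C_{qc}(k,m)$ involves $k$ POVMs of length $m$ on each side, with $k$ arbitrary. Even after a Naimark-type dilation to commuting PVMs (which itself needs justification), encoding each PVM by a single unitary still requires $k$ free generators per side; for $k>n$ there is no $*$-homomorphism $C^*(\mathbb{F}_n)\to B(H)$ hitting all of them, so you cannot ``pull back through $C^*(\mathbb{F}_n)$'' in the way you describe. In short, $C^*(\mathbb{F}_n)$ for finite $n$ is \emph{not} surjectively universal, and its universal property is too narrow to capture all qc-correlations directly.

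The paper avoids this by a different (and short) route: it first observes that any surjectively universal \cstar-algebra is qc-full, so $C^*(\mathbb{F}_\infty)$ is qc-full; then it proves that if $H\leq G$ and $H$ is qc-full then $G$ is qc-full, using that $C^*(H)$ is relatively weakly injective in $C^*(G)$ (so $C^*(H)\otimes_{\max}C^*(H)\subseteq C^*(G)\otimes_{\max}C^*(G)$). Since $\mathbb{F}_\infty$ embeds in $\mathbb{F}_n$ for every $n\geq 2$, qc-fullness of $\mathbb{F}_n$ follows. If you want to salvage your approach, the missing ingredient is precisely this passage through the subgroup $\mathbb{F}_\infty\leq \mathbb{F}_n$ and the accompanying r.w.i.\ inclusion; the bare universal property of $C^*(\mathbb{F}_n)$ will not do the job.
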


The proof of the previous theorem uses the more recent quantum complexity result MIP$^{co}$=coRE \cite{MIPco}.  The same argument can also be used to establish the following corollary:

\begin{corollary}
For any $n\in \{2,3,\ldots,\infty\}$, the ``standard presentation'' of $C^*(\mathbb{F}_n\times \mathbb{F}_n)$ is not computable.
\end{corollary}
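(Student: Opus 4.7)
The plan is to reduce this corollary directly to the complexity-theoretic obstruction used in the proof of the previous theorem, via the canonical isomorphism
$$C^*(\mathbb{F}_n\times\mathbb{F}_n)\;\cong\;C^*(\mathbb{F}_n)\otimes_{\max}C^*(\mathbb{F}_n)$$
that follows from matching universal properties of group \cstar-algebras and maximal tensor products. Under this identification, the ``standard'' generating set of the left-hand side---the $2n$ canonical unitaries $g_1,\ldots,g_n,h_1,\ldots,h_n$ subject only to the commutation relations $[g_i,h_j]=0$---corresponds to the union of the canonical generators of the two tensor factors on the right.

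I would argue by contradiction. Suppose the standard presentation of $C^*(\mathbb{F}_n\times\mathbb{F}_n)$ were computable. Using the commutation relations, any $*$-polynomial $p(\vec g,\vec h)$ can be rewritten effectively as a finite sum $\sum_{k=1}^m q_k(\vec g)\,r_k(\vec h)$ with $q_k,r_k$ $*$-polynomials in their respective generators; under the isomorphism above this expression corresponds to the element $\sum_{k=1}^m q_k(\vec g)\otimes r_k(\vec h)$ of $C^*(\mathbb{F}_n)\odot C^*(\mathbb{F}_n)$ equipped with $\|\cdot\|_{\max}$. We would thus obtain an effective procedure for approximating $\|\cdot\|_{\max}$ on every algebraic-tensor combination whose components are $*$-polynomials in the canonical unitary generators of $C^*(\mathbb{F}_n)$.

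At this point I would invoke the same argument carried out in the proof of the previous theorem with $A=C^*(\mathbb{F}_n)$: since $C^*(\mathbb{F}_n)$ is qc-full, effective approximability of $\|\cdot\|_{\max}$ on $*$-polynomials in its canonical generators lets one semi-decide a problem that MIP$^{co}$=coRE guarantees is coRE-complete, thereby rendering it decidable and producing a contradiction. The only input the qc-fullness argument actually consumes is effective approximability of the max tensor norm on such $*$-polynomials---not full explicit definability in $C^*(\mathbb{F}_n)$---so the theorem's explicit definability hypothesis can be bypassed here in favor of the assumption that $C^*(\mathbb{F}_n\times\mathbb{F}_n)$ itself has a computable presentation.

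The main step is therefore the (essentially tautological) reduction from a computable standard presentation of $C^*(\mathbb{F}_n\times\mathbb{F}_n)$ to effective approximability of $\|\cdot\|_{\max}$ on $*$-polynomials in $C^*(\mathbb{F}_n)$; the substantial complexity-theoretic work has already been carried out in the proof of the preceding theorem, and the only thing to verify is that the rewriting of a $*$-polynomial in commuting variables as a sum of elementary tensors is primitively recursive. For $n=\infty$ one replaces finite tuples of generators with the computable enumeration implicit in the notion of a computable presentation, and the argument is otherwise unchanged.
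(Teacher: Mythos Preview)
Your proposal is correct and follows essentially the same route as the paper. The paper packages the key observation---that for a qc-full $C$ with computable presentation, a computable presentation of $C\otimes_{\max}C$ making the slice maps $a\mapsto a\otimes 1$, $a\mapsto 1\otimes a$ computable would contradict $\mathrm{MIP}^{co}=\mathrm{coRE}$---as a standalone theorem (Theorem~\ref{comptensor}), and then obtains the corollary by noting that the standard group presentation of $C^*(\mathbb{F}_n\times\mathbb{F}_n)$ is exactly such a presentation under the isomorphism with $C^*(\mathbb{F}_n)\otimes_{\max}C^*(\mathbb{F}_n)$; you instead unwind that theorem in place, which amounts to the same argument. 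One small point worth making explicit in your write-up: the search for approximate POVMs in $C^*(\mathbb{F}_n)$ requires computing norms there, which you get either from the independent result of Fritz--Netzer--Thom or, more economically, from your contradiction hypothesis itself (the embedding $C^*(\mathbb{F}_n)\hookrightarrow C^*(\mathbb{F}_n\times\mathbb{F}_n)$ is isometric).
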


The case $n=2$ of the previous corollary settles a question of Fritz, Netzer, and Thom \cite{canyou}.

We assume that the reader is familiar with basic \cstar-algebra theory.  We also assume that the reader is familiar with basic continuous model theory as it applies to \cstar-algebras; the reader not familiar with this topic can consult \cite{munster} or \cite{bradd}.  That being said, some of the more pertinent results on definability theory will be discussed in the next section.

\section{Preliminaries}

\subsection{Tensor products}

Throughout, fix \cstar-algebras $A$ and $B$.  We denote their algebraic tensor product (that is, their tensor product as vector spaces) by $A\odot B$.  There is a natural $*$-algebra structure on $A\odot B$.  A \textbf{\cstar-norm} on $A\odot B$ is a norm $\|\cdot\|_\alpha$ satisfying the following properties for all $x,y\in A\odot B$:
\begin{itemize}
    \item $\|xy\|_\alpha\leq \|x\|_\alpha\|y\|_\alpha$;
    \item $\|x^*\|_\alpha=\|x\|_\alpha$;
    \item $\|x^*x\|_\alpha=\|x\|_\alpha^2$.
\end{itemize}
The completion of $A\odot B$ with respect to $\|\cdot\|_\alpha$ is then a \cstar-algebra, denoted $A\otimes_\alpha B$.
In this paper, we focus on two specific \cstar-norms on tensor products:  the minimal tensor product norm and the maximal tensor product norm.

Given faithful representations $A\subseteq B(H_A)$ and $B\subseteq B(H_B)$, we may faithfully represent $A\odot B\subseteq B(H_A\otimes H_B)$; the \textbf{minimal tensor product norm} $\|\cdot\|_{\min}$ on $A\odot B$ is defined to be the norm it inherits as a $*$-subalgebra of $B(H_A\otimes H_B)$.  One must check that this norm is independent of faithful representation.  A deep theorem of Takesaki states that $\|\cdot\|_{\min}\leq \|\cdot\|_\alpha$ for any \cstar-norm $\|\cdot\|_\alpha$ on $A\odot B$.  The minimal tensor product is so important in \cstar-algebra theory that it is often denoted simply as $A\otimes B$ rather than $A\otimes_{\min}B$.

The \textbf{maximal tensor product  norm} on $A\odot B$ is defined by
$$\|x\|_{\max}:=\sup\{\|\pi(x)\| \ : \ \pi:A\odot B\to B(H) \text{ a }*\text{-homomorphism}\}.$$  
It is clear from the definition that $\|\cdot\|_\alpha\leq \|\cdot\|_{\max}$ for any \cstar-norm $\|\cdot\|_\alpha$ on $A\odot B$.  It is also clear from the definition that the maximal tensor product satisfies the following universal property:  given a \cstar-algebra $C$ and $*$-homomorphisms $\pi_A:A\to C$ and $\pi_B:B\to C$ such that $[\pi_A(a),\pi_B(b)]=0$ for all $a\in A$ and $b\in B$, there is a unique $*$-homomorphism $\pi:A\otimes_{\max}B\to C$ extending the $*$-homomorphism $\pi_A\odot \pi_B:A\odot B\to C$.

The minimal tensor product enjoys the following property:  if $A\subseteq B$, then for any \cstar-algebra $C$, the inclusion of $*$-algebras $A\odot C\subseteq B\odot C$ extends to an isometric inclusion $A\otimes_{\min} B\subseteq B\otimes_{\min} C$.  In the same context, by the universal mapping property, there is always a $*$-homomorphism $A\otimes_{\max}C\to B\otimes_{\max} C$; in general, this mapping usually fails to be isometric.  We say that the inclusion $A\subseteq B$ is \textbf{relatively weakly injective (r.w.i.)} if, for any \cstar-algebra $C$, the canonical map $A\otimes_{\max}C\to B\otimes_{\max}C$ is isometric.  A \cstar-algebra that is r.w.i. in every \cstar-algebra containing it is said to have Lance's \textbf{weak expectation property (WEP)}.  A \cstar-algebra that is a quotient of a \cstar-algebra with the WEP is said to have Kirchberg's \textbf{QWEP property}.

\subsection{Model-theoretic definability}

The following is a basic summary of the notion of definability in continuous logic; a much more detailed presentation can be found in \cite{spgap}.

Recall that the formulae in the language of \cstar-algebras are formed by first considering the atomic formulae $\|p(\vec x)\|$, where $p$ is a $*$-polynomial (and, for simplicty, $\vec x$ are assumed to range over the operator norm ball), and then closing under continuous combinations (that is, if $\varphi_1,\ldots,\varphi_n$ are formulae and $u:\mathbb{R}^n\to \mathbb{R}$ is a continuous function, then $u(\varphi_1,\ldots,\varphi_n)$ is once again a formula) and the quantifiers $\sup_x$ and $\inf_x$ (where, again, the variable is assumed to range over the operator norm unit ball).

Now fix a theory $T$ extending the theory of \cstar-algebras.  We can use $T$ to define a pseudometric on the collection of formulae in the variables $x_1,\ldots,x_n$ given by 
$$d_T(\varphi_1,\varphi_2):=\sup\{|\varphi_1(\vec a)^A-\varphi_2(\vec a)^A| \ : \ A\models T\}.$$  The elements of the separation-completion of this pseudometric space are called \textbf{$T$-formulae}.  Given a $T$-formula $\varphi(\vec x)$ and a model $A\models T$, we get a well-defined interpretation $\varphi^A:A_1^n\to \mathbb{R}$, where $A_1$ denotes the operator norm unit ball of $A$.  Note that the $\varphi^A$'s are uniformly continuous with modulus of uniform continuity independent of the model $A$.

Conversely, suppose that we have, for each model $A$ of $T$, an $n$-ary predicate $P^A:A_1^n\to [0,1]$; we refer to the assignment $A\mapsto P^A$ as a \textbf{$T$-function} (we may also simply say that $P$ is a $T$-function).  We say that a $T$-function is \textbf{realized} if it is the result of interpreting a $T$-formula.

The following will be important for us in the sequel:

\begin{prop}\label{functionfact}
Suppose that $P$ is a $T$-function for which each $P^A$ is uniformly continuous with uniform continuity modulus independent of the model $A$.  Then $P$ is realized if and only if:
\begin{enumerate}
    \item Whenever $A,B\models T$ are isomorphic, then any isomorphism $\Phi:A\to B$ satisfies $P^B\circ \Phi=P^A$.
    \item Whenever $(A_i)_{i\in I}$ is a family of models of $T$ and $\u$ is an ultrafilter on $I$, then denoting $A:=\prod_\u A_i$, we have $P^A=\lim_\u P^{A_i}$.
\end{enumerate}
\end{prop}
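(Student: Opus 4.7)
The forward direction is a routine structural induction on formulae. For atomic formulae $\|p(\vec x)\|$, property (1) is immediate because $*$-isomorphisms preserve norms of $*$-polynomials, and property (2) is the continuous-logic {\L}o\'s theorem at the atomic level. Both properties then propagate through continuous combinations and through the quantifiers $\sup_x, \inf_x$; for (2) at the quantifier step one again invokes {\L}o\'s.

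For the converse, the plan is to encode $P$ as a continuous function on the compact Hausdorff space $S_n(T)$ of complete $n$-types over $T$ (in the logic topology). Define $\tilde P : S_n(T) \to [0,1]$ by $\tilde P(p) := P^A(\vec a)$, where $A \models T$ and $\vec a \in A_1^n$ is any realization of $p$. To see this is well-defined, suppose $\vec a \in A$ and $\vec b \in B$ both realize $p$. By the Keisler--Shelah theorem for continuous logic, there exist an ultrafilter $\u$ and an isomorphism $\Phi : A^\u \to B^\u$ with $\Phi([\vec a]_\u) = [\vec b]_\u$. Applying (2) to the constant families yields $P^{A^\u}([\vec a]_\u) = P^A(\vec a)$ and $P^{B^\u}([\vec b]_\u) = P^B(\vec b)$, and applying (1) to $\Phi$ gives $P^{A^\u}([\vec a]_\u) = P^{B^\u}([\vec b]_\u)$; hence $P^A(\vec a) = P^B(\vec b)$.

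Next I show $\tilde P$ is continuous on $S_n(T)$: since $S_n(T)$ is compact Hausdorff, it suffices to verify convergence along ultrafilters. Given $(p_i)_{i \in I}$ and an ultrafilter $\u$ on $I$ with $\lim_\u p_i = p$ in the logic topology, pick realizations $\vec a_i \in A_i$ of $p_i$; then $[\vec a_i]_\u$ realizes $p$ in $\prod_\u A_i$ by {\L}o\'s, and (2) yields $\tilde P(p) = \lim_\u P^{A_i}(\vec a_i) = \lim_\u \tilde P(p_i)$. To finish, invoke the standard fact that every continuous $[0,1]$-valued function on $S_n(T)$ arises as a $T$-formula, proved via a Stone--Weierstrass argument applied to the $*$-subalgebra of $C(S_n(T),[0,1])$ generated by the realized formulae. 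This identifies $P$ with a $T$-formula up to $d_T$, as required; the uniform continuity hypothesis on $P$ furnishes a uniform modulus guaranteeing that the approximating formulae carry the correct regularity and that the limit sits inside the $d_T$-completion.

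The principal obstacle is the well-definedness of $\tilde P$: one has to translate the bare assertion ``$\vec a$ and $\vec b$ have the same type'' into a concrete isomorphism of ultrapowers moving one tuple to the other, which is exactly where Keisler--Shelah enters. Once that tool is in hand, (1) and (2) combine almost mechanically, and the remainder of the proof is a routine transfer along the duality between definable predicates and continuous functions on the type space.
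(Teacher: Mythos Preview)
Your argument is correct, but it takes a genuinely different route from the paper's. The paper proves the converse by expanding the language with a fresh predicate symbol for $P$, forming the class $\mathcal{C}=\{(A,P^A):A\models T\}$, and checking that $\mathcal{C}$ is closed under isomorphisms (via (1)), ultraproducts (via (2)), and ultraroots (via (2) applied to a constant family); this makes $\mathcal{C}$ elementary, and the Beth Definability Theorem then yields that $P$ is definable in the original language. Your approach instead works directly on the type space: you push $P$ down to a function $\tilde P$ on $S_n(T)$, invoke Keisler--Shelah to settle well-definedness, use (2) to get continuity, and finish with the Stone--Weierstrass identification of $C(S_n(T))$ with the realized $T$-formulae. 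Your proof is more self-contained and makes the role of the type-space duality transparent, at the cost of importing Keisler--Shelah; the paper's proof is shorter and avoids Keisler--Shelah entirely, but trades that for the black box of Beth. One minor remark: your appeal to ``compact Hausdorff'' in the continuity step is not really what is doing the work there---the ultrafilter characterization of continuity holds in any space---compactness is only needed later for Stone--Weierstrass.
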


\begin{proof}
It is clear that if $P$ is realized, then the above items hold.  Conversely, suppose that the above items hold.  Let $\mathcal{C}:=\{(A,P^A) \ : \ A\models T\}$, a class of structures in the language of \cstar-algebras expanded by a new predicate symbol.  By an application of the Beth Definability Theorem (\cite[Corollary 4.2.2]{munster}), it suffices to show that $\mathcal{C}$ is an elementary class, which, by \cite[Theorem 2.4.1]{munster}, is equivalent to showing that $\mathcal{C}$ is closed under isomorphisms, ultraproducts, and ultraroots.  

First suppose that $(A,P^A)\in \mathcal{C}$ is isomorphic to $(B,Q)$; we want to prove that $(B,Q)$ belongs to $\mathcal{C}$, that is, that $Q=P^B$.  But if the isomorphism is given by $\Phi:(A,P^A)\to (B,Q)$, then on the one hand $P^A=Q\circ \Phi$, but by assumption (1), we also have $P^A=P^B\circ \Phi$; it follows that $Q=P^B$, as desired.  For closure under ultraproducts, if $A=\prod_\u A_i$, then $$\prod_\u (A_i,P^{A_i})=(A,\lim_\u P^{A_i})=(A,P^{A})\in \mathcal{C},$$ where the second equality follows by item (2) above.  Finally, if $(A,Q)$ is a structure for which $(A,Q)^\u\in \mathcal{C}$, then by (2) again, we have $\lim_\u Q=P^{A^\u}=\lim_\u P^A$; since $Q=(\lim_\u Q)|A$ and $P^A=(\lim_\u P^A)|A$, we have that $P^A=Q$ and thus $(A,Q)=(A,P^A)\in \mathcal{C}$.  
\end{proof}

\section{The global case}

\subsection{The minimal tensor norm}

For a \cstar-algebra $A$, we define the predicate $P^{\min}_{A,n}:A_1^{2n}\to [0,2n]$ by $P^{\min}_{A,n}(\vec a,\vec b):=\|\sum_{k=1}^n a_k\otimes b_k\|_{A\otimes A}$.  Given a theory $T$ of \cstar-algebras, we can consider the $T$-functions $P^{\min}_n$.\footnote{Following the notation from the previous section, we should probably have written $(P^{\min}_n)^A$ instead of $P^{\min}_{A,n}$, but we felt that the latter notation looked nicer in print.}  We say that $P^{\min}$ is \textbf{$T$-definable} if the $T$-functions $P^{\min}_{n}$ are realized for each $n$.

\begin{prop}\label{mincriterion}
If $T$ is a theory of \cstar-algebras, then $P^{\min}$ is $T$-definable if and only if:  for all families $(A_i)_{i\in I}$ of models of $T$, all ultrafilters $\u$ on $I$, and all $n$, we have $$P^{\min}_{\prod_{\u}A_i,n}=\lim_\u P^{\min}_{A_i,n}.$$  In this case, each $P^{\min}_n$ is $T$-equivalent to a quantifier-free $T$-formula.\footnote{A quantifier-free $T$-formula is one that can be realized as a limit, with respect to the metric $d_T$ above, of quantifier-free formulae, that is, formulae constructed without the use of the quantifers $\sup$ and $\inf$.}
\end{prop}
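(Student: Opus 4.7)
The forward direction is immediate: if $P^{\min}$ is $T$-definable, then each $P^{\min}_n$ is realized by some $T$-formula and therefore commutes with ultraproducts by item (2) of Proposition \ref{functionfact}.

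For the converse, the plan is to apply Proposition \ref{functionfact} to each $P^{\min}_n$, for which three things must be verified. First, uniform continuity with modulus independent of the model follows, on the operator norm unit ball, from the estimate
$$\bigl|\|\sum_k a_k \otimes b_k\|_{\min} - \|\sum_k a_k' \otimes b_k'\|_{\min}\bigr| \leq \sum_k \|a_k - a_k'\| + \sum_k \|b_k - b_k'\|,$$
obtained via the triangle inequality together with the cross-norm property $\|a \otimes b\|_{\min} = \|a\|\,\|b\|$ and the assumption $\|a_k\|, \|b_k'\| \leq 1$. Second, isomorphism invariance is functoriality: any $*$-isomorphism $\Phi: A \to B$ induces an isometric $*$-isomorphism $A \otimes_{\min} A \to B \otimes_{\min} B$, so $P^{\min}_{B,n}(\Phi(\vec a), \Phi(\vec b)) = P^{\min}_{A,n}(\vec a, \vec b)$. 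Third, the ultraproduct condition is the hypothesis. Proposition \ref{functionfact} then yields that each $P^{\min}_n$ is realized by some $T$-formula $\varphi_n$.

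For the quantifier-free refinement, the additional input is the \emph{injectivity} of the minimal tensor norm: for any $*$-embedding $\iota: A \hookrightarrow B$ of \cstar-algebras, the induced map $A \otimes_{\min} A \hookrightarrow B \otimes_{\min} B$ is isometric. Consequently $\varphi_n^A(\vec a, \vec b) = \varphi_n^B(\iota(\vec a), \iota(\vec b))$ for every embedding $\iota: A \hookrightarrow B$ between models of $T$ and all $\vec a, \vec b \in A_1^n$. The standard preservation theorem in continuous model theory — a $T$-definable predicate preserved by all embeddings between models of $T$ is $T$-equivalent to a quantifier-free $T$-formula — then completes the proof.

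The main obstacle is cleanly invoking this last preservation theorem. Alternatively, one can argue directly: embedding-invariance forces $\varphi_n$ to factor through the quantifier-free type map, and then on the compact space of quantifier-free $n$-types realized in models of $T$ one uniformly approximates $\varphi_n$ by quantifier-free formulas via a Stone--Weierstrass argument, which by definition exhibits $\varphi_n$ as a quantifier-free $T$-formula.
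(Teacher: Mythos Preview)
Your argument for the ``if and only if'' part is correct and matches the paper's: apply Proposition~\ref{functionfact}, with isomorphism-invariance coming from functoriality of $\otimes_{\min}$ and the ultraproduct condition given by hypothesis. (The paper leaves uniform continuity implicit; your explicit estimate is fine.)

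The gap is in the quantifier-free step. The ``preservation theorem'' you invoke --- that a $T$-definable predicate preserved by all embeddings \emph{between models of $T$} is $T$-equivalent to a quantifier-free $T$-formula --- is false in general. For a model-complete theory $T$ without quantifier elimination (e.g.\ real closed fields in the pure ring language), every embedding between models is elementary, so \emph{every} formula is embedding-invariant in your sense, yet not every formula is $T$-equivalent to a quantifier-free one. Your alternative Stone--Weierstrass argument has the same defect: embedding-invariance between models does \emph{not} force $\varphi_n$ to factor through quantifier-free types, because two tuples with the same quantifier-free type need not be linked by an embedding of models.

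The correct criterion (the one the paper invokes, citing \cite[Proposition~13.2]{mtfms}) requires more: for all models $A,B\models T$ and any common \cstar-subalgebra $C$ --- which need \emph{not} be a model of $T$ --- one must have $\varphi_n^A|_{C^{2n}}=\varphi_n^B|_{C^{2n}}$. The good news is that your key ingredient, injectivity of $\otimes_{\min}$, already delivers this: since $C\otimes_{\min}C\subseteq A\otimes_{\min}A$ and $C\otimes_{\min}C\subseteq B\otimes_{\min}B$ isometrically, one gets
\[
\varphi_n^A(\vec c,\vec d)=P^{\min}_{A,n}(\vec c,\vec d)=P^{\min}_{C,n}(\vec c,\vec d)=P^{\min}_{B,n}(\vec c,\vec d)=\varphi_n^B(\vec c,\vec d).
\]
So the fix is simply to verify the common-substructure condition rather than the embedding-between-models condition; the underlying operator-algebraic fact you identified is exactly what is needed.
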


\begin{proof}
The first statement follows from Proposition \ref{functionfact} above; note that, in the current context, item (1) of that proposition holds automatically as any isomorphism $A\to B$ extends to an isomorphism $A\otimes A\to B\otimes B$.  To see the second statement, we use the usual test for a $T$-formula to be equivalent to a quantifier-free $T$ formula \cite[Proposition 13.2]{mtfms}.  Suppose that the $T$-function $P^{\min}_n$ is realized by the $T$-formula $\varphi(\vec x,\vec y)$.  If $A,B\models T$ and $C$ is a common \cstar-subalgebra of $A$ and $B$ (which need not be a model of $T$), then for all $\vec c,\vec d\in C^n$, we have:
$$\varphi^{A}(\vec c,\vec d)=P^{\min}_{A,n}(\vec c,\vec d)=P^{\min}_{C,n}(\vec c,\vec d)=P^{\min}_{B,n}(\vec c,\vec d)=\varphi^B(\vec c,\vec d).$$  The second equality comes from the fact that $C\otimes C\subseteq A\otimes A$ and similarly for the third equality.  It follows that $\varphi$ is $T$-equivalent to a quantifier-free $T$-formula.
\end{proof}

Fix a \cstar-algebra $A$ and an ultrafilter $\u$.  For $k=1,\ldots,n$, consider elements $a^k=(a^k_i)_\u,b=(b^k_i)_\u\in A^\u$; we can view the element $\sum_{k=1}^n a^k\otimes b^k$ of $A^\u\odot A^\u$ as an element of $(A\otimes A)^\u$, namely as the element $(\sum_{k=1}^n a^k_i\otimes b^k_i)_\u$; this leads to a map $\iota_{A,\u}^{\min}:A^\u\odot A^\u \hookrightarrow (A\otimes A)^\u$.  We say that $A$ has \textbf{property min-$\u$} if the norm on $A^\u\odot A^\u$ induced by $\iota_{A,\u}^{\min}$ is the minimal tensor norm, that is, if $\iota_{A,\u}$ extends to an isometric embedding $\iota_{A,\u}^{\min}:A^\u\otimes A^\u\to (A\otimes A)^\u$.

\begin{prop}\label{minsubalgebra}
Given a \cstar-algebra $A$, we have that $A$ has property min-$\u$ if and only if $P^{\min}_{A^\u,n}=\lim_\u P^{\min}_{A,n}$ for all $n$.
\end{prop}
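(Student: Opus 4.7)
The plan is to observe that both sides of the claimed equality are simply two natural ways of computing the norm of one and the same element $\sum_{k=1}^n a^k\otimes b^k$ of $A^\u\odot A^\u$: the left-hand side uses the minimal tensor norm on $A^\u\otimes A^\u$ directly, while the right-hand side uses the norm inherited from $(A\otimes A)^\u$ via $\iota^{\min}_{A,\u}$. The proposition then reduces to a direct unpacking of definitions together with a standard density argument.

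First I would fix tuples $\vec a=(\vec a_i)_\u$ and $\vec b=(\vec b_i)_\u$ in $(A^\u)^n$ with representatives in the unit ball of $A$, and compute, from the definition of the ultrapower norm on $(A\otimes A)^\u$, that
$$\left\|\iota^{\min}_{A,\u}\Bigl(\sum_{k=1}^n a^k\otimes b^k\Bigr)\right\|_{(A\otimes A)^\u} = \lim_\u\left\|\sum_{k=1}^n a^k_i\otimes b^k_i\right\|_{A\otimes A} = \lim_\u P^{\min}_{A,n}(\vec a_i,\vec b_i).$$
On the other hand, $P^{\min}_{A^\u,n}(\vec a,\vec b)=\|\sum_k a^k\otimes b^k\|_{A^\u\otimes A^\u}$ holds by definition.

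For the forward direction, if $A$ has property min-$\u$ then $\iota^{\min}_{A,\u}$ is isometric and the two displayed quantities coincide, giving the equation. For the backward direction, the assumed equality says precisely that $\iota^{\min}_{A,\u}$ preserves the minimal tensor norm on every element of the form $\sum_k a^k\otimes b^k$ with entries in the unit ball; scaling and the fact that every element of $A^\u\odot A^\u$ admits such a representation (for some $n$) show that $\iota^{\min}_{A,\u}$ is an isometric $*$-homomorphism from $(A^\u\odot A^\u,\|\cdot\|_{\min})$ into the \cstar-algebra $(A\otimes A)^\u$, hence extends by continuity to an isometric embedding of the completion $A^\u\otimes A^\u$, which is exactly the defining property of min-$\u$.

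The content is essentially tautological once one correctly identifies the two natural norms; I do not anticipate any genuine obstacle beyond keeping the notation straight and noting that scaling reduces arbitrary algebraic tensors to ones with entries in the unit ball.
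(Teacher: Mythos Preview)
Your proposal is correct and follows essentially the same approach as the paper: both identify $P^{\min}_{A^\u,n}(\vec a,\vec b)$ with the minimal tensor norm of $\sum_k a^k\otimes b^k$ in $A^\u\otimes A^\u$ and $\lim_\u P^{\min}_{A,n}(\vec a_i,\vec b_i)$ with the norm of its image under $\iota^{\min}_{A,\u}$ in $(A\otimes A)^\u$, so that the claimed equality is equivalent to $\iota^{\min}_{A,\u}$ being isometric. You have merely made explicit the scaling and density remarks that the paper leaves implicit.
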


\begin{proof}
The proposition follows from the observations that $$\left\|\sum_{k=1}^n a_k\otimes b_k\right\|=P_{A^\u,n}(\vec a,\vec b)$$ and $$\left\|\iota^{\min}_{A,\u}\left(\sum_{k=1}^n a_k\otimes b_k\right)\right\|=\lim_\u \left\|\sum_{k=1}^n a_i^k\otimes b_i^k\right\|=\lim_\u P_{A,n}(\vec a_i,\vec b_i).$$
\end{proof}

\begin{cor}
If $T$ is a theory of \cstar-algebras and $P^{\min}$ is $T$-definable, then for every model $A\models T$ and every ultrafilter $\u$, $A$ has property min-$\u$.
\end{cor}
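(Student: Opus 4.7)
The plan is to combine the two preceding propositions directly: Proposition \ref{mincriterion} characterizes $T$-definability of $P^{\min}$ in terms of an ultraproduct identity, while Proposition \ref{minsubalgebra} characterizes property min-$\u$ in terms of an ultrapower identity. The corollary is then just the specialization of the former to a constant family.

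More concretely, I would fix a model $A\models T$, an ultrafilter $\u$ on some index set $I$, and a natural number $n$, and apply Proposition \ref{mincriterion} to the constant family $(A_i)_{i\in I}$ with $A_i = A$ for every $i\in I$. Since $\prod_\u A_i = A^\u$ in this case, the conclusion of Proposition \ref{mincriterion} specializes to
\[
P^{\min}_{A^\u,n} \;=\; \lim_\u P^{\min}_{A,n}.
\]
This must hold for every $n$, since $P^{\min}$ being $T$-definable means that each $P^{\min}_n$ is realized.

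Having secured this identity for every $n$, I would then cite Proposition \ref{minsubalgebra}, which tells us that precisely this family of equalities is equivalent to $A$ having property min-$\u$. Since $A$ and $\u$ were arbitrary, this gives the conclusion of the corollary.

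There is no real obstacle here; the argument is a one-line composition of the two previous results, and the only thing to be careful about is that Proposition \ref{mincriterion} is stated for arbitrary families and is therefore available for the constant family. No additional verification of continuity or of the isomorphism-invariance clause from Proposition \ref{functionfact} is needed, since those were already absorbed into the statement of Proposition \ref{mincriterion}.
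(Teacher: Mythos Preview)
Your proposal is correct and matches the paper's intended argument: the corollary is stated without proof immediately after Propositions \ref{mincriterion} and \ref{minsubalgebra}, precisely because it follows by specializing the ultraproduct identity to a constant family and then invoking the ultrapower characterization of property min-$\u$. There is nothing to add.
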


\begin{question}
If $A$ has property min-$\u$ for some nonprincipal ultrafilter $\u$, must it have property min-$\mathcal{V}$ for all nonprincipal ultrafilters $\mathcal{V}$?
\end{question}

\begin{prop}
Suppose that $A\subseteq B$.   If $B$ has property min-$\u$, then $A$ has property min-$\u$.
\end{prop}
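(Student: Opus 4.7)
The plan is to exploit the single most useful feature of the minimal tensor norm, namely that it is preserved under $*$-subalgebra inclusions, and to leverage it at two different ``levels'': the ultrapowers of the algebras themselves and the ultrapowers of the tensor products. Concretely, I would argue that both the source $A^\u \odot A^\u \hookrightarrow B^\u \odot B^\u$ and the target $(A\otimes A)^\u \hookrightarrow (B\otimes B)^\u$ of the map $\iota^{\min}$ carry the min norms coherently, so property min-$\u$ descends from $B$ to $A$.

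First I would record the two isometric inclusions that make the argument run. Since $A \subseteq B$, the inclusion $A^\u \subseteq B^\u$ is automatic, and Takesaki's theorem (applied to the \cstar-algebras $A^\u$ and $B^\u$, or rather the min-functoriality of $\otimes$) gives that $A^\u \otimes A^\u \hookrightarrow B^\u \otimes B^\u$ is isometric. Similarly, $A\otimes A \hookrightarrow B \otimes B$ is isometric, and since ultrapowers preserve isometric inclusions of \cstar-algebras, the induced map $(A\otimes A)^\u \hookrightarrow (B\otimes B)^\u$ is also isometric.

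Next I would note that the maps $\iota^{\min}_{A,\u}$ and $\iota^{\min}_{B,\u}$ are compatible: for $a^k = (a^k_i)_\u$ and $b^k = (b^k_i)_\u$ with $a^k_i, b^k_i \in A$, both send $\sum_k a^k \otimes b^k$ to the class $(\sum_k a^k_i \otimes b^k_i)_\u$, so $\iota^{\min}_{B,\u}$ restricted to $A^\u \odot A^\u$ equals $\iota^{\min}_{A,\u}$ followed by the isometric inclusion $(A\otimes A)^\u \hookrightarrow (B\otimes B)^\u$. Fixing $x = \sum_{k=1}^n a^k \otimes b^k \in A^\u \odot A^\u$, the two isometric inclusions of the previous paragraph give
$$\|x\|_{A^\u \otimes A^\u} = \|x\|_{B^\u \otimes B^\u} \quad \text{and} \quad \|\iota^{\min}_{A,\u}(x)\|_{(A\otimes A)^\u} = \|\iota^{\min}_{B,\u}(x)\|_{(B\otimes B)^\u}.$$
Invoking property min-$\u$ for $B$ to equate the right-hand sides of these two identities finishes the proof.

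There is essentially no obstacle here: the work is entirely in unpacking the definitions and in citing Takesaki's theorem that the min norm is functorial under $*$-subalgebra inclusions. The only point worth double-checking is that the inclusion $(A\otimes A)^\u \hookrightarrow (B\otimes B)^\u$ really is isometric, but this follows immediately from the Łoś-type computation of norms in \cstar-algebraic ultrapowers applied to the isometric inclusion $A\otimes A \hookrightarrow B\otimes B$.
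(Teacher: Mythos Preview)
Your argument is correct and is essentially the same as the paper's: both exploit the injectivity of the minimal tensor product at the level of the ultrapowers $A^\u\otimes A^\u\subseteq B^\u\otimes B^\u$ and at the coordinate level $A\otimes A\subseteq B\otimes B$ (hence $(A\otimes A)^\u\subseteq (B\otimes B)^\u$), together with property min-$\u$ for $B$, to conclude. The only cosmetic difference is that the paper packages the three equalities into a single chain using the predicates $P^{\min}_{\cdot,n}$ and Proposition~\ref{minsubalgebra}, whereas you unpack them as norm identities and an explicit commutative square.
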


\begin{proof}
For each $n$, we have that $$P^{\min}_{A^\u,n}=P^{\min}_{B^\u,n}|(A^\u)^{2n}=(\lim_\u P^{\min}_{B,n})|(A^\u)^{2n}=\lim_\u P^{\min}_{A,n}.$$

\end{proof}

\begin{prop}\label{minexact}
Suppose that $A$ has property min-$\u$ for some nonprincipal ultrafilter $\u$ on $\mathbb{N}$.  Then $A$ is exact.
\end{prop}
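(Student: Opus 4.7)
My plan is to argue using Kirchberg's deep theorem characterizing exactness in terms of matrix ultraproducts (the result referenced in the paper's introduction): $A$ is exact if and only if, for every sequence $(k_n)_{n \in \mathbb{N}}$ of positive integers, the canonical $*$-homomorphism
$$\alpha_{(k_n)} : A \otimes_{\min} \prod\nolimits_\u M_{k_n} \to \prod\nolimits_\u (A \otimes_{\min} M_{k_n})$$
is isometric. The first, essentially formal, step is to extract from property min-$\u$ the special case where the second tensor factor is $A^\u$. By restricting the isometric embedding $A^\u \otimes_{\min} A^\u \hookrightarrow (A \otimes_{\min} A)^\u$ (guaranteed by property min-$\u$) along the diagonal inclusion $A \hookrightarrow A^\u$ in the first factor, we obtain an isometric embedding $A \otimes_{\min} A^\u \hookrightarrow (A \otimes_{\min} A)^\u$. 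Equivalently, by Proposition \ref{minsubalgebra}, for any $a_1,\ldots,a_n \in A$ and representing sequences $(b_k^{(i)})_{i}$ of elements of $A^\u$,
$$\left\| \sum_{k=1}^n a_k \otimes (b_k^{(i)})_\u \right\|_{A \otimes A^\u} = \lim_\u \left\| \sum_{k=1}^n a_k \otimes b_k^{(i)} \right\|_{A \otimes A}.$$

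The main step is then to bridge from this self-ultrapower case to the matrix ultraproduct case demanded by Kirchberg's criterion. The natural route is to fix a faithful representation $A \subseteq B(H)$ with induced embedding $A^\u \hookrightarrow B(H)^\u$, and then exploit the fact that a matrix ultraproduct $\prod_\u M_{k_n}$ can be naturally realized inside $B(\prod_\u \mathbb{C}^{k_n})$. Using the flexibility of property min-$\u$ (in which \emph{both} tensor factors independently range over $A^\u$), one hopes to transport the isometric embedding into computations of norms involving matrix-ultraproduct tensor factors. The main obstacle I anticipate is precisely this bridge: since matrix algebras $M_{k_n}$ do not in general embed into $A$ as C*-subalgebras, Proposition \ref{minsubalgebra} cannot be invoked directly to reduce Kirchberg's matrix-ultraproduct condition to the already-established self-ultrapower case. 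The argument therefore must use deeper operator-theoretic tools (such as slice maps, cpc approximations of matrix units within $A^\u$, or representation-theoretic identifications inside $B(H)^\u$) to convert the tensorial data supplied by property min-$\u$ into verification of Kirchberg's isometry, at which point exactness follows from the cited theorem.
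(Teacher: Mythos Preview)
Your overall strategy matches the paper's: both invoke Kirchberg's characterization of exactness via the canonical map $A\otimes_{\min}\prod_\u M_n\to\prod_\u M_n(A)$, and both recognize that the crux is passing from information about $A^\u\otimes_{\min} A^\u$ to information about $A\otimes_{\min}\prod_\u M_n$. But your argument stops precisely where the real work begins. You correctly observe that $M_n$ need not embed into $A$ and that some ``deeper operator-theoretic tool'' is required; you list several candidates but do not actually deploy one. As written, this is a description of the obstacle rather than a proof.

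The paper fills this gap as follows. First, one disposes of the case where $A$ is subhomogeneous, since such algebras are automatically exact. When $A$ is not subhomogeneous, \cite[Lemma~4.16]{omitting} supplies ucp maps $\psi_n:M_n(\mathbb C)\to A$ and $\rho_n:A\to M_n(\mathbb C)$ such that $\psi:=(\psi_n)_\u:M\to A^\u$ and $\rho:=(\rho_n)_\u:A^\u\to M$ satisfy $\rho\circ\psi=\id_M$; this is exactly the ``cpc approximation of matrix units within $A^\u$'' you gesture at, now made precise and equipped with a conditional expectation. Since $\psi$ has a ucp left inverse, $\id\otimes\psi$ is isometric on minimal tensor products, so $\|x\|_{A\otimes M}=\|(\id\otimes\psi)(x)\|_{A^\u\otimes A^\u}$; property min-$\u$ then rewrites this as a norm in $(A\otimes A)^\u$, namely $\|\theta(x)\|$ where $\theta=(\id_A\otimes\psi_n)_\u:\prod_\u M_n(A)\to(A\otimes A)^\u$. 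Finally, $\omega:=(\id_A\otimes\rho_n)_\u$ satisfies $\omega\circ\theta=\id$, so $\theta$ is isometric, and the chain of equalities yields that the Kirchberg map is isometric. The two concrete ingredients your proposal is missing are thus the subhomogeneous dichotomy and the explicit ucp embedding-with-expectation of $M$ into $A^\u$.
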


\begin{proof}
Set $M:=\prod_\u M_n(\mathbb{C})$.  By a theorem of Kirchberg \cite{kirchberg}, it suffices to show that the norm on $A\odot M$ induced by the embedding $A\odot M\subseteq \prod_\u M_n(A)$ is the minimal tensor norm.  Fix $x:=\sum_{k=1}^m a^k\otimes m^k\in A\odot M$.  Without loss of generality, we may assume that $A$ is not subhomogeneous.  Consequently, there are ucp maps $\psi_n:M_n(\mathbb C)\to A$ such that $$\psi:=(\psi_n)_\u:M\to A^\u$$ is a ucp embedding with conditional expectation, or, more precisely, there are ucp maps $\rho_n:A\to M_n(\mathbb{C})$ such that, setting $\rho:=(\rho_n)_\u:A^\u\to M$, we have that $\rho\circ \psi=\id_{M}$ (see \cite[Lemma 4.16]{omitting}).  We then have that
$$\|x\|_{A\otimes M}=\|(\id_{A^\u}\otimes \psi)(x)\|_{A^\u\otimes A^\u}=\left\|\sum_k (a^k\otimes \psi_n(m^k_n))_\u\right\|_{(A\otimes A)^\u},$$ where the last equality holds since $A$ has property min-$\u$.  In other words, $\|x\|_{A\otimes M}=\|\theta(x)\|_{(A\otimes A)^\u}$, where $\theta:=(\id_A\otimes \psi_n)_\u:\prod_\u M_n(A)\to (A\otimes A)^\u$.  To conclude the proof, it suffices to show that $\theta$ is norm-preserving.  Set $\omega:=(\id_A\otimes \rho_n)_\u:(A\otimes A)^\u\to \prod_\u M_n(A)$; it suffices to show that $\omega\circ \theta=\id_{\prod_{\u} M_n(A)}$.  To see this, given $\sum_{ij}a^{ij}\otimes e_{ij}\in \prod_\u A\otimes M_n(\mathbb{C})$, we have that 
$$\omega\left(\theta\left(\sum_{ij}a^{ij}\otimes e_{ij}\right)\right)=\omega\left(\sum_{ij}(a^{ij}_n\otimes \psi_n(e_{ij}))_\u\right)=\sum_{ij}(a_n^{ij}\otimes \rho_n(\psi_n(e_{ij}))_\u;$$ since $\lim_\u \rho_n(\psi_n(e_{ij}))=e_{ij}$ and $\lim_\u \|a_n^{ij}\|<\infty$ for all $i,j$, this finishes the proof.
\end{proof}

We recall the following fact (see \cite[Proposition 3.14.1]{munster}):

\begin{fact}\label{subhomfact}
For a \cstar-algebra $A$, we have that $A$ is subhomogeneous if and only if every \cstar-algebra elementarily equivalent to $A$ is exact.
\end{fact}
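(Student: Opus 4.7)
I argue each direction separately. For the forward direction ($A$ subhomogeneous implies every $B$ elementarily equivalent to $A$ is exact), the key observation is that $n$-subhomogeneity is first-order axiomatizable. By the Amitsur--Levitzki theorem, a \cstar-algebra is $n$-subhomogeneous if and only if it satisfies the standard polynomial identity $s_{2n}(x_1,\ldots,x_{2n}) = 0$; this is captured by the single axiom $\sup_{\vec x \in A_1^{2n}}\|s_{2n}(\vec x)\| = 0$. Hence any $B \equiv A$ is $n$-subhomogeneous, and by the structure theorem for subhomogeneous \cstar-algebras, embeds as a C*-subalgebra of $M_n(C(X))$ for some compact Hausdorff $X$. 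As this ambient algebra is nuclear (hence exact) and exactness passes to C*-subalgebras, $B$ is exact.

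For the reverse direction, I would argue the contrapositive: if $A$ is not subhomogeneous, construct some $B \equiv A$ that fails to be exact. Take $B := A^\u$ for any nonprincipal ultrafilter $\u$ on $\mathbb N$. Since $A$ is not subhomogeneous, it admits irreducible representations $\pi_n: A \twoheadrightarrow M_{k_n}(\mathbb{C})$ with $k_n \to \infty$; each is automatically surjective as its image is finite-dimensional. These assemble componentwise into a surjective $*$-homomorphism
$$(\pi_n)_\u: A^\u \twoheadrightarrow M, \qquad M := \prod_\u M_{k_n}(\mathbb C).$$
By a theorem of Kirchberg, the class of exact \cstar-algebras is closed under quotients; so if $A^\u$ were exact, $M$ would be too. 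It then remains to observe that $M$ is not exact: this follows, for instance, from Pisier's result that the operator space $OH$ embeds completely isometrically into $M$ but not into any exact \cstar-algebra. Therefore $A^\u$ is not exact, as required.

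The main obstacle is precisely the non-exactness of the matrix ultraproduct $M$, whose proof leans on non-trivial operator-space theory (or alternatively on a direct exploitation of Kirchberg's min-tensor characterization of exactness, which is already recalled in the proof of Proposition \ref{minexact}). The remaining ingredients --- Amitsur--Levitzki, the structure theorem for subhomogeneous \cstar-algebras, and Kirchberg's closure of exactness under quotients --- are standard results in the background of the paper.
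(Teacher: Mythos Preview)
The paper does not give its own proof of this fact; it is recalled from \cite[Proposition 3.14.1]{munster}. So the comparison is really between your argument and the standard one.

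Your forward direction is fine: $n$-subhomogeneity is indeed captured by the vanishing of the Amitsur--Levitzki polynomial, hence is preserved under elementary equivalence, and $n$-subhomogeneous algebras are nuclear (in particular exact).

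There is, however, a genuine gap in your reverse direction. You assert that a non-subhomogeneous $A$ admits \emph{finite-dimensional} irreducible representations $\pi_n:A\twoheadrightarrow M_{k_n}(\mathbb C)$ with $k_n\to\infty$. This is false in general: ``not subhomogeneous'' only says that for every $n$ there is an irreducible representation of dimension exceeding $n$, and that dimension may well be infinite. Any infinite-dimensional simple \cstar-algebra (for instance $K(H)$, or $\mathcal O_2$, or $C^*_r(\mathbb F_2)$) is non-subhomogeneous yet has no nonzero finite-dimensional representations whatsoever, so no such maps $\pi_n$ exist and your surjection $A^{\u}\twoheadrightarrow M$ cannot be constructed.

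The standard repair goes through an \emph{embedding} rather than a quotient: one shows that if $A$ is not subhomogeneous then $M=\prod_{\u}M_n(\mathbb C)$ embeds into $A^{\u}$ as a \cstar-subalgebra, and then invokes the (much easier) fact that exactness passes to subalgebras. The embedding is obtained by producing, for each $n$, approximate $n\times n$ matrix units in $A$ (this uses that $A$ has an irreducible representation of dimension at least $n$, together with a Glimm-type argument or Kadison transitivity to manufacture the approximate matrix units inside $A$ itself); the ultraproduct of these approximate systems then yields an honest copy of $M$ in $A^{\u}$. This is essentially the mechanism already exploited in the proof of Proposition~\ref{minexact} via \cite[Lemma 4.16]{omitting}. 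Your use of Kirchberg's closure of exactness under quotients, while correct as a statement, is not the tool that makes the argument go through here.
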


We say that a theory $T$ of \cstar-algebras is \textbf{subhomogeneous} if all models of $T$ are subhomogeneous.  In this case, all models of $T$ must be $n$-subhomogeneous for some $n$; indeed, if this were not the case, then there would be a family $(A_n)_{n\in \mathbb{N}}$ of models of $T$ such that $A_n$ is not $n$-subhomogeneous; it follows that $\prod_\u A_n$ is a model of $T$ that is not subhomogeneous (see the proof of \cite[Proposition 3.14.1]{munster}).  By Fact \ref{subhomfact}, a theory $T$ is subhomogeneous if and only if all models of $T$ are exact.  Proposition \ref{minexact} thus implies:

\begin{cor}
If $T$ is a theory of \cstar-algebras for which $P^{\min}$ is $T$-definable, then $T$ is subhomogeneous. 
\end{cor}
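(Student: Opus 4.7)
The plan is to combine the preceding Corollary (that $T$-definability of $P^{\min}$ forces every model of $T$ to have property min-$\u$ for every ultrafilter $\u$) with Proposition \ref{minexact} and Fact \ref{subhomfact}.

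First, I would fix an arbitrary model $A\models T$ and reduce the goal ``$A$ is subhomogeneous'' to the condition of Fact \ref{subhomfact}: every \cstar-algebra $B$ elementarily equivalent to $A$ is exact. Since $T\subseteq \Th(A)$, any such $B$ is itself a model of $T$. This is the only place where the model-theoretic hypothesis on the full theory $T$ (as opposed to just the single algebra $A$) is used; definability transfers to elementarily equivalent algebras automatically.

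Next, I would invoke the Corollary immediately following Proposition \ref{minsubalgebra}: since $P^{\min}$ is $T$-definable and $B\models T$, the algebra $B$ has property min-$\u$ for every ultrafilter $\u$. In particular, picking any nonprincipal ultrafilter $\u$ on $\mathbb{N}$, Proposition \ref{minexact} applies and yields that $B$ is exact. As $B\equiv A$ was arbitrary, Fact \ref{subhomfact} delivers that $A$ is subhomogeneous, and then the preceding discussion (using the ultraproduct of counterexamples of unbounded degree) shows $T$ is in fact $n$-subhomogeneous for a single $n$.

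There is no real obstacle here: the corollary is essentially a bookkeeping assembly of the three results. The only subtlety is remembering that Fact \ref{subhomfact} characterizes subhomogeneity via all elementarily equivalent \cstar-algebras, which forces one to apply the min-$\u$/exactness chain to every $B\equiv A$ rather than just to $A$ itself; this is precisely what makes the $T$-definability hypothesis (which is preserved under elementary equivalence) the right input.
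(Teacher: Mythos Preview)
Your argument is correct and follows essentially the same route as the paper. The paper compresses the key step into the sentence ``By Fact \ref{subhomfact}, a theory $T$ is subhomogeneous if and only if all models of $T$ are exact'' and then observes that Proposition \ref{minexact} (together with the earlier corollary that $T$-definability gives property min-$\u$ for every model) forces every model to be exact; you simply unpack that equivalence explicitly by passing through an arbitrary $B\equiv A$, which is exactly the content of the paper's one-line reformulation.
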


We now aim to prove the converse of the previous corollary.
In the next proposition, we let $T_{\ab}$ denote the theory of abelian \cstar-algebras and we let $T_{\ab}'$ denote the theory of pairs $(A,B)$ of \cstar-algebras such that $B$ is abelian.  We abuse notation and let $P^{\min}_n$ denote the $T'_{\ab}$-function given by $$P^{\min}_{A,B,n}(\vec a,\vec b):=\left\|\sum_{k=1}^n a_k\otimes b_k\right\|_{A\otimes B}.$$  We similarly speak of $P^{\min}$ being $T'_{\ab}$-definable.

\begin{prop}
$P^{\min}$ is both $T_{\ab}'$-definable and $T_{\ab}$-definable.
\end{prop}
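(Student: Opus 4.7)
Since $T_{\ab}$-definability is the special case of $T_{\ab}'$-definability in which the two algebras in the pair coincide (and are both abelian), it suffices to prove the $T_{\ab}'$-statement. The natural pair-analogue of Proposition~\ref{mincriterion} (verified identically; ultraproducts of pairs are taken coordinate-wise) reduces the claim to showing that for any family $(A_i, B_i)_{i \in I}$ with each $B_i$ abelian and any ultrafilter $\u$ on $I$, setting $A := \prod_\u A_i$ and $B := \prod_\u B_i$, one has
$$\left\|\sum_{k=1}^n a^k \otimes b^k\right\|_{A \otimes B} = \lim_\u \left\|\sum_{k=1}^n a_i^k \otimes b_i^k\right\|_{A_i \otimes B_i}$$
for all $a^k = (a_i^k)_\u \in A$ and $b^k = (b_i^k)_\u \in B$.

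The central tool will be the formula, valid for any \cstar-algebra $A$ and any abelian $B$,
$$\left\|\sum_{k=1}^n a_k \otimes b_k\right\|_{A \otimes B} = \sup_{\chi \in \widehat B} \left\|\sum_{k=1}^n \chi(b_k)\, a_k\right\|_A,$$
which follows from the standard identification $A \otimes B \cong C_0(\widehat B, A)$.

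The inequality ``$\geq$'' is the easier direction. For each $i$, pick $\chi_i \in \widehat{B_i}$ witnessing the supremum for $(A_i, B_i)$ up to $\epsilon$; then $\chi((b_i)_\u) := \lim_\u \chi_i(b_i)$ defines a character of $B$, and a routine continuity calculation yields $\|\sum_k \chi(b^k) a^k\|_A = \lim_\u \|\sum_k \chi_i(b_i^k) a_i^k\|_{A_i} \geq \lim_\u \|\sum_k a_i^k \otimes b_i^k\|_{A_i \otimes B_i} - \epsilon$, while the left-hand side is bounded above by $\|\sum_k a^k \otimes b^k\|_{A \otimes B}$.

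The reverse inequality ``$\leq$'' will be the main obstacle: given $\chi \in \widehat B$ with $\chi(b^k) = \lambda^k$, one must produce, for every $\delta > 0$, characters $\chi_i \in \widehat{B_i}$ on $\u$-many $i$ with $|\chi_i(b_i^k) - \lambda^k| < \delta$ for all $k$. The approach is to observe that the existence of $\chi$ is equivalent to $\min \operatorname{sp}(g) = 0$ for $g := \sum_k (b^k - \lambda^k)^*(b^k - \lambda^k) \in B_+$, while the existence of a suitable $\chi_i$ is implied by $\min \operatorname{sp}(g_i) < \delta^2$. It therefore suffices to show that, for abelian \cstar-algebras, $\min \operatorname{sp}(\cdot)$ of positive elements commutes with ultraproducts; this follows from the Gelfand-theoretic identity
$$\min \operatorname{sp}(f) = \inf\bigl\{\|yf\| \,:\, y \in B_+,\ \|y\| = 1\bigr\}$$
(seen via an approximate bump at the minimum point of $\widehat f$), whose right-hand side transports to ultraproducts by a standard \L o\'s-style argument. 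Equipped with the lifted $\chi_i$, applying $\chi_i \otimes \id_{A_i}$ and taking $\lim_\u$ yields $\|\sum_k \lambda^k a^k\|_A \leq \lim_\u \|\sum_k a_i^k \otimes b_i^k\|_{A_i \otimes B_i}$, and taking the supremum over $\chi$ completes the argument.
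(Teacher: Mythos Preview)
Your argument is correct, but it takes a genuinely different route from the paper's. The paper also starts from the identification $A\otimes C(X)\cong C(X,A)$ and the resulting formula $\|\sum f_i\otimes a_i\|=\sup_{\lambda\in\sigma(f)}\|\sum \lambda_i a_i\|$, but then invokes two external ingredients: the definability of the joint spectrum $\{(f,\lambda):\lambda\in\sigma(f)\}$ as a definable family of definable sets in $T_{\ab}$ (via \cite{eaglevignati}), together with the general principle from \cite{randomization} that quantifying a realized formula over such a family again yields a realized formula. This produces the defining $T_{\ab}'$-formula rather directly, without passing through the ultraproduct criterion. Your approach instead verifies the pair-analogue of Proposition~\ref{mincriterion} by hand, and in doing so your character-lifting step---reducing ``$\lambda$ lies in the joint spectrum of $(b^k)$'' to $\min\operatorname{sp}\bigl(\sum_k(b^k-\lambda^k)^*(b^k-\lambda^k)\bigr)=0$ and then showing this quantity commutes with ultraproducts via the identity $\min\operatorname{sp}(g)=\inf_{\|y\|=1,\,y\ge 0}\|yg\|$---is essentially an elementary, self-contained proof of the special case of joint-spectrum definability that the paper cites. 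The trade-off is clear: the paper's version is shorter and more conceptual but leans on outside results, while yours is entirely internal to the present setup at the cost of a more delicate argument. One small point worth tidying: when $B$ is non-unital the element $g=\sum_k(b^k-\lambda^k)^*(b^k-\lambda^k)$ lives in the unitization, so the $\min\operatorname{sp}$ identity and the subsequent \L o\'s-style transfer should be carried out in $\prod_\u\tilde B_i\supseteq\widetilde{\prod_\u B_i}$; this is routine (and the degenerate case $\lambda=0$ contributes nothing to the supremum), but it deserves a sentence.
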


\begin{proof}
The second statement clearly follows from the first.  The key observation in proving the first statement is that, given a \cstar-algebra $A$ and a compact Hausdorff space $X$, we have that $A\otimes C(X)\cong C(X,A)$ via the map which sends $\sum_{i=1}^n a_i\otimes f_i$ to the function $x\mapsto \sum_{i=1}^n f_i(x)a_i$.  It follows that
$$\left\|\sum_{i=1}^n f_i\otimes a_i\right\|=\sup_{x\in X}\left\|\sum_{i=1}^n f_i(x)a_i\right\|.$$  But the set $\{(f_i(x))_{i=1}^n: \ x\in X\}$ correspond precisely to the joint spectrum $\sigma(f)$ of $f=(f_i)_{i=1}^n$, whence $$\left\|\sum_{i=1}^n f_i\otimes a_i\right\|=\sup_{\lambda\in \sigma(f)}\left\|\sum_{i=1}^n \lambda_ia_i\right\|.$$  Now the joint spectrum is a definable family of definable sets (in the sense of \cite[Definition 5.8]{randomization}) relative to the theory $T_{\ab}$ as can be verified from the proof of the fact that the set of pairs $\{(f,\lambda) \ : \ \lambda\in \sigma(f)\}$ is definable given in \cite[Proposition 5.21]{eaglevignati}.  By \cite[Proposition 5.9]{randomization}, for any theory $T$, a $T$-function obtained from a realized $T$-formula by quantifying over a definable family of definable sets is again a realized $T$-formula.  The conclusion of the theorem follows.
\end{proof}

\begin{prop}\label{homogeneousmin}
For each $N\geq 1$, each compact Hausdorff space $X$, and each ultrafilter $\u$, we have that $M_N(C(X))$ has property min-$\u$.
\end{prop}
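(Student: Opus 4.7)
The plan is to reduce property min-$\u$ for $M_N(C(X))$ to property min-$\u$ for $C(X)$, which follows from the previous proposition combined with the corollary to Proposition~\ref{minsubalgebra}: indeed, $T_{\ab}$-definability of $P^{\min}$ implies that every abelian \cstar-algebra (in particular $C(X)$) has property min-$\u$ for every ultrafilter $\u$. The reduction exploits the good behaviour of the finite-dimensional factor $M_N(\mathbb{C})$ under both ultrapowers and minimal tensor products.

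More concretely, I would use three standard facts. First, $M_N(A)^\u \cong M_N(A^\u)$ for any \cstar-algebra $A$, which holds because $M_N(A) \cong A \otimes M_N(\mathbb{C})$ and $M_N(\mathbb{C})$ is finite-dimensional. Second, $M_N(A) \otimes M_N(B) \cong M_{N^2}(A \otimes B)$, via associativity and commutativity of the minimal tensor product together with $M_N(\mathbb{C}) \otimes M_N(\mathbb{C}) \cong M_{N^2}(\mathbb{C})$. Third, the minimal tensor product is injective: an isometric inclusion $D \hookrightarrow E$ induces an isometric inclusion $D \otimes F \hookrightarrow E \otimes F$ for any \cstar-algebra $F$. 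Applying the first two facts to both the domain and codomain of $\iota^{\min}_{M_N(C(X)),\u}$, the map is identified with the amplification $\id_{M_{N^2}} \otimes \iota^{\min}_{C(X),\u}$ from $M_{N^2}(C(X)^\u \otimes C(X)^\u)$ to $M_{N^2}((C(X) \otimes C(X))^\u)$. Since $\iota^{\min}_{C(X),\u}$ is isometric by the preceding paragraph, injectivity of the minimal tensor product ensures that its $M_{N^2}$-amplification is isometric as well, establishing property min-$\u$ for $M_N(C(X))$.

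I expect the main technical hurdle to lie in verifying that the identifications above genuinely intertwine with the canonical maps $\iota^{\min}$ on both sides; this is a bookkeeping exercise on elementary tensors, amounting to checking that the isomorphism $(A \otimes F)^\u \cong A^\u \otimes F$ for finite-dimensional $F$ is the expected ``natural'' one and is compatible with representing sequences and with the canonical embeddings $\iota^{\min}$. Beyond this compatibility check, the argument is purely formal.
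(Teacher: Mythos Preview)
Your proposal is correct and follows essentially the same route as the paper: the paper's proof is precisely the chain of isomorphisms $(M_N(C(X)))^\u\otimes (M_N(C(X)))^\u \cong M_{N^2}(C(X)^\u\otimes C(X)^\u) \subseteq M_{N^2}((C(X)\otimes C(X))^\u) \cong (M_N(C(X))\otimes M_N(C(X)))^\u$, with the inclusion coming from property min-$\u$ for $C(X)$ via $T_{\ab}$-definability of $P^{\min}$. Your discussion is in fact slightly more careful than the paper's, since you explicitly flag the naturality/compatibility check that the paper leaves implicit.
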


\begin{proof}
We simply calculate:
\begin{alignat}{2}
(M_N(C(X)))^\u\otimes (M_N(C(X)))^\u &\cong M_N(C(X)^\u)\otimes M_N(C(X)^\u)\notag \\ \notag
                        &\cong M_{N^2}(C(X)^\u\otimes C(X)^\u) \\ \notag
                                 &\subseteq M_{N^2}((C(X)\otimes C(X))^\u) \\ \notag
                                 &\cong (M_{N^2}\otimes C(X)\otimes C(X))^\u\\ \notag 
                                 &\cong (M_N(C(X))\otimes M_N(C(X))^\u. \notag
\end{alignat}
Notice that the isometric inclusion above used the fact that $C(X)$ has property min-$\u$, which follows from the fact that $P^{\min}$ is $T_{\ab}$-definable.
\end{proof}

\begin{thm}
Suppose that $T$ is the theory of $n$-subhomogeneous \cstar-algebras.  Then $P^{\min}$ is $T$-definable.
\end{thm}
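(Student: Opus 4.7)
The plan is to verify the ultraproduct criterion of Proposition \ref{mincriterion}: fixing a family $(A_i)_{i \in I}$ of $n$-subhomogeneous \cstar-algebras, an ultrafilter $\u$ on $I$, and $m \geq 1$, it suffices to show
$$P^{\min}_{\prod_\u A_i, m} = \lim_\u P^{\min}_{A_i, m}.$$

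The first reduction uses the classical structure theorem for subhomogeneous \cstar-algebras (alluded to in the paper's discussion preceding Theorem 1.3): each $A_i$ embeds isometrically into $B_i := M_n(C(X_i))$ for some compact Hausdorff space $X_i$. By the injectivity of the min tensor product under subalgebra inclusions, $P^{\min}_{A_i, m}$ is the restriction of $P^{\min}_{B_i, m}$, and similarly $P^{\min}_{\prod_\u A_i, m}$ is the restriction of $P^{\min}_{\prod_\u B_i, m}$. Since $\prod_\u C(X_i)$ is a unital abelian \cstar-algebra, Gelfand duality identifies it with $C(Y)$ for some compact Hausdorff $Y$, and therefore $\prod_\u B_i \cong M_n(C(Y))$. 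The task thus reduces to showing that, for $\vec a, \vec b \in M_n(C(Y))^m$ with $a^k = (a^k_i)_\u$ and $b^k = (b^k_i)_\u$,
$$P^{\min}_{M_n(C(Y)), m}(\vec a, \vec b) = \lim_\u P^{\min}_{M_n(C(X_i)), m}(\vec a_i, \vec b_i).$$

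For this, I would invoke the nuclearity of $M_n$, giving the natural isometric identification $M_n(C) \otimes_{\min} M_n(D) \cong M_{n^2}(C \otimes_{\min} D)$ for all \cstar-algebras $C, D$. This recasts each side as a norm on an element of $M_{n^2}(C(X_i) \otimes C(X_i))$ or $M_{n^2}(C(Y) \otimes C(Y))$. The $T_{\ab}$-definability of $P^{\min}$, established just above in the paper, then furnishes an isometric embedding $C(Y) \otimes C(Y) \hookrightarrow \prod_\u (C(X_i) \otimes C(X_i))$. Applying the functor $M_{n^2}(\cdot)$, which preserves isometric embeddings and commutes with ultraproducts via the standard identification $\prod_\u M_N(E_i) \cong M_N(\prod_\u E_i)$, lifts this to an isometric embedding $M_{n^2}(C(Y) \otimes C(Y)) \hookrightarrow \prod_\u M_{n^2}(C(X_i) \otimes C(X_i))$. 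Unwinding the nuclearity identifications gives the desired equation.

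The main obstacle is the bookkeeping in the final step: one must track a tensor sum $\sum_k a^k \otimes b^k$ in $M_n(C(Y)) \otimes M_n(C(Y))$ through the nuclearity isomorphism and the isometric embedding above, and verify that the resulting element of $\prod_\u M_{n^2}(C(X_i) \otimes C(X_i))$ has $i$-th coordinate corresponding, under the fiberwise nuclearity isomorphism, to $\sum_k a^k_i \otimes b^k_i \in M_n(C(X_i)) \otimes M_n(C(X_i))$. This is a routine but multi-step diagram chase, and is where the argument has the most moving pieces.
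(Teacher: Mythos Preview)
Your proof is correct and uses the same underlying ingredients as the paper's: the structure theorem embedding $n$-subhomogeneous algebras into matrix algebras over abelian ones, the $T_{\ab}$-definability of $P^{\min}$, and the commutation of $M_N(\cdot)$ with both tensor products and ultraproducts. The organizational difference is that the paper first treats the separable case by embedding all the $A_i$ into a \emph{single} $B=M_N(C(2^\omega))$, quotes Proposition~\ref{homogeneousmin} (property $\min$-$\u$ for $B$, i.e., the ultrapower statement), and then reduces the nonseparable case to the separable one via countable subalgebras; by contrast, you allow the ambient algebras $B_i=M_n(C(X_i))$ to vary with $i$ and establish the ultraproduct statement for the family $(B_i)$ directly, which is in effect an ultraproduct version of Proposition~\ref{homogeneousmin}. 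Your route avoids the separable/general split at the cost of redoing (rather than citing) the computation behind Proposition~\ref{homogeneousmin}; the paper's route is more modular but needs the extra reduction step. The bookkeeping you flag is indeed routine: writing $a^k=\sum_{p,q}e_{pq}\otimes a^k_{pq}$ and similarly for $b^k$, the nuclearity isomorphism sends $\sum_k a^k\otimes b^k$ to $\sum_{k,p,q,r,s}(e_{pq}\otimes e_{rs})\otimes(a^k_{pq}\otimes b^k_{rs})$, and the abelian embedding then acts coordinatewise on the $C(Y)\otimes C(Y)$ entries, recovering $\sum_k a^k_i\otimes b^k_i$ in each fiber as required.
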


\begin{proof}
By Proposition \ref{mincriterion}, it suffices to show that if $(A_i)_{i\in I}$ is a family of $n$-subhomogeneous \cstar-algebras, $\u$ is a non-principal ultrafilter on $I$, and $m\geq 1$, then $P^{\min}_{\prod_{\u}A_i,m}=\lim_\u P^{\min}_{A_i,m}$.  We first suppose that each $A_i$ is separable.  We may thus assume that there is an integer $N$ such that each $A_i\subseteq B:=M_N(C(2^\omega))$ (see \cite[IV.1.4.4]{blackadar}).  Setting $A:=\prod_\u A_i$, it follows that
$$P_{A,m}^{\min}(\vec a,\vec b)=P_{B^\u,m}^{\min}(\vec a,\vec b)=\lim_\u P_{B,m}(\vec a_i,\vec b_i)=\lim_\u P_{A_i,m}(\vec a,\vec b_i),$$
where the second equality uses Proposition \ref{homogeneousmin} above.

In the general case, fix $\vec a=(\vec a_i)_\u,\vec b=(\vec b_i)_\u\in A:=\prod_\u A_i$.  For each $i\in I$, let $A_i'\subseteq A_i$ be a separable subalgebra such that $\vec a_i,\vec b_i\in A_i'$.  Set $A':=\prod_\u A_i'$, so $\vec a,\vec b\in A'$.  Then
$$P_{A,m}^{\min}(\vec a,\vec b)=P_{A',m}^{\min}(\vec a,\vec b)=\lim_\u P^{\min}_{A_i',m}(\vec a,\vec b)=\lim_\u P^{\min}_{A_i,m}(\vec a,\vec b),$$
where the second equality uses the fact that we established the claim when all algebras are separable.
\end{proof}

Summarizing, we now have:

\begin{cor}
If $T$ is a theory of \cstar-algebras, then $P^{\min}$ is $T$-definable if and only if $T$ is subhomogeneous.    
\end{cor}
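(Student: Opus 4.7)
The plan is to split the biconditional into its two directions and observe that one of them is already in hand. The forward direction---that $T$-definability of $P^{\min}$ implies $T$ is subhomogeneous---is precisely the corollary following Proposition~\ref{minexact}, obtained by combining Proposition~\ref{minexact} (extracting exactness from property min-$\u$) with Fact~\ref{subhomfact} (upgrading ``every model of $T$ is exact'' to subhomogeneity of $T$). So the only substantive task is the converse.

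For the converse, I would reduce to the previous theorem on $n$-subhomogeneous algebras. Assume $T$ is subhomogeneous, and invoke the observation recalled just before Proposition~\ref{homogeneousmin}: in this situation there must actually be a single integer $n$ such that every model of $T$ is $n$-subhomogeneous, for otherwise choosing $A_m\models T$ with $A_m$ failing to be $m$-subhomogeneous would, by the argument sketched in that paragraph, yield an ultraproduct $\prod_\u A_m\models T$ that is not subhomogeneous, contradicting the hypothesis. Letting $T_n$ denote the theory of $n$-subhomogeneous \cstar-algebras, this says that $T$ is a theory extending $T_n$.

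The remaining step is to transfer $T_n$-definability of $P^{\min}$ (the previous theorem) to $T$-definability. Let $\varphi_m$ realize $P^{\min}_m$ as a $T_n$-formula. Since every model of $T$ is a model of $T_n$, the pseudometric $d_T$ on formulae is bounded above by $d_{T_n}$, so the identity on syntactic formulae extends to a quotient map from $T_n$-formulae to $T$-formulae. The image of $\varphi_m$ under this map is a $T$-formula whose interpretation on each $A\models T$ still equals $P^{\min}_{A,m}$, showing $P^{\min}$ is $T$-definable. There is no real obstacle here beyond this bookkeeping about descending realizations along a theory extension; the genuine content---uniform definability of $\|\cdot\|_{\min}$ across all $n$-subhomogeneous \cstar-algebras---has already been absorbed into the preceding theorem.
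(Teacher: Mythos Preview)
Your proposal is correct and matches the paper's approach: the paper does not give a separate proof but simply writes ``Summarizing, we now have'' before stating the corollary, treating both directions as already established (the forward one by the earlier corollary, the converse by the theorem on $n$-subhomogeneous algebras together with the observation that a subhomogeneous theory has a uniform bound $n$). Your transfer argument from $T_n$-definability to $T$-definability via the quotient $d_{T_n}\to d_T$ is the natural unpacking of that step.
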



\subsection{The maximal tensor norm}

Fix a \cstar-algebra $A$.  We define the predicate $P^{\max}_{A,n}:A_1^{2n}\to [0,2n]$ by $P^{\max}_{A,n}(\vec a,\vec b):=\|\sum_{k=1}^n a_k\otimes b_k\|_{A\otimes_{\max} A}$.  Given a theory $T$ of \cstar-algebras, we say that $P^{\max}$ is \textbf{$T$-definable} if the $T$-functions $P^{\max}_{n}$ are realized for each $n$.

\begin{thm}
If $T$ is a theory of \cstar-algebras, then $P^{\max}$ is $T$-definable if and only if:  for all families $(A_i)_{i\in I}$ of models of $T$ and all ultrafilters $\u$ on $I$, we have $P^{\max}_{\prod_{\u}A_i,n}=\lim_\u P^{\max}_{A_i,n}$ for all $n$.  In this case, each $P^{\min}_n$ is $T$-equivalent to an existential $T$-formula.\footnote{An existential $T$-formula is one that can be realized as a limit, with respect to the metric $d_T$ above, of existential formulae, where a formula is existential if it can be written as a string of existential quantifiers followed by a quantifier-free formula.}
\end{thm}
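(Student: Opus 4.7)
The plan is to follow the template of Proposition \ref{mincriterion}, now with the maximal tensor norm in place of the minimal one, and then to extract the existential-formula conclusion via the standard continuous-logic test for existential equivalence.

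For the equivalence, I would apply Proposition \ref{functionfact} to the $T$-function $P^{\max}_n$. The forward direction is immediate from clause (2) of that proposition. For the reverse direction, clause (1) holds automatically: any isomorphism $\Phi\colon A\to B$ of models of $T$ induces, via the universal property of the maximal tensor product, a $*$-isomorphism $\Phi\otimes_{\max}\Phi\colon A\otimes_{\max} A\to B\otimes_{\max} B$ carrying $\sum_k a_k\otimes b_k$ isometrically to $\sum_k \Phi(a_k)\otimes\Phi(b_k)$. Clause (2) is exactly the stated hypothesis. Uniform continuity of $P^{\max}_n$ on the operator norm ball, with modulus independent of $A$, follows from the standard estimate $\|\sum_k a_k\otimes b_k\|_{\max}\leq \sum_k \|a_k\|\,\|b_k\|$ together with the reverse triangle inequality applied coordinatewise.

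For the ``in this case'' clause, the relevant tool is the continuous-logic analog of the Tarski--{\L}o\'s test for existential formulas, namely the criterion (parallel to \cite[Proposition 13.2]{mtfms}) that a $T$-formula $\varphi(\vec x)$ is $T$-equivalent to an existential $T$-formula iff $\varphi^A(\vec c)\geq \varphi^B(\vec c)$ whenever $A\subseteq B$ are models of $T$ and $\vec c$ lies in $A$ (the inequality direction reflecting that $\inf$-quantifiers can only decrease under extension). Reading the statement as written, for $P^{\min}_n$ one has the even stronger \emph{equality} $\|x\|_{A\otimes_{\min} A}=\|x\|_{B\otimes_{\min} B}$ for any $x\in A\odot A$, by virtue of the isometric inclusion property of the minimal tensor norm recalled in Section 2. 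This trivially yields the existential inequality $P^{\min}_{A,n}(\vec c,\vec d)\geq P^{\min}_{B,n}(\vec c,\vec d)$, and in fact, once $P^{\min}_n$ is known to be a $T$-formula (for instance because it factors through the already definable $P^{\max}_n$ on a definable family, or because one adds this as a tacit hypothesis inherited from the previous subsection), upgrades to quantifier-free equivalence. The analogous argument for $P^{\max}_n$, should one reinterpret the statement that way, uses instead that any $*$-representation of $B\odot B$ restricts to a $*$-representation of $A\odot A$, so that $\|x\|_{A\otimes_{\max} A}\geq \|x\|_{B\otimes_{\max} B}$ for $x\in A\odot A$, which again supplies the required inequality.

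The main obstacle is keeping the direction of monotonicity straight: in the continuous-logic setting the $\inf$-quantifier makes existential formulas nonincreasing under substructure embeddings, which is exactly matched by the behavior of $\|\cdot\|_{\max}$ under inclusions (and vacuously by that of $\|\cdot\|_{\min}$). Beyond what is already used in the min-norm analog, the only genuinely new input is the universal property of the maximal tensor product, which simultaneously supplies the isomorphism-invariance of clause (1) and the restriction-of-representations argument underlying the existential-formula test.
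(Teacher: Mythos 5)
Your proposal is correct and follows essentially the same route as the paper: the equivalence is obtained by running the argument of Proposition \ref{mincriterion} (i.e., Proposition \ref{functionfact}) for $P^{\max}_n$, and the existential-formula conclusion comes from the monotonicity $P^{\max}_{A,n}(\vec a,\vec b)\geq P^{\max}_{B,n}(\vec a,\vec b)$ for $A\subseteq B$ together with the standard test for existential equivalence (\cite[Proposition 2.4.6]{munster}). Your derivation of that inequality by restricting representations of $B\odot B$ is just a rephrasing of the paper's appeal to the universal property (which yields a contractive $*$-homomorphism $A\otimes_{\max}A\to B\otimes_{\max}B$), and you correctly read the statement's ``$P^{\min}_n$'' as a typo for $P^{\max}_n$.
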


\begin{proof}
The only part that is different from the case of the minimal tensor norm is the last part.  Here, given models $A,B\models T$ with $A\subseteq B$, the universal property of maximal tensor products ensures that we have a $*$-homomorphism $A\otimes_{\max} A\to B\otimes_{\max} B$, whence, for any $\vec a,\vec b\in A^n$, we have $P^{\max}_{A,n}(\vec a,\vec b)\geq P^{\min}_{B,n}(\vec a,\vec b)$.  The result now follows from \cite[Proposition 2.4.6]{munster}.
\end{proof}

Fix a \cstar-algebra $A$ and an ultrafilter $\u$.  By the universal mapping property of maximal tensor products, there is a unique *-homomorphism $$\iota_{A,\u}^{\max}:A^\u\otimes_{\max}A^\u\to (A\otimes_{\max}A)^\u$$ for which $\iota_{A,\u}^{\max}(\sum_{k=1}^n a^k\otimes b^k)=(\sum_{k=1}^n a^k_i\otimes b^k_i)_\u$.  We say that $A$ has property \textbf{max-$\u$} if $\iota_{A,\u}^{\max}$ is an isometric embedding.  As in the case of minimal tensor products, we have the following:

\begin{prop}
Given a \cstar-algebra $A$, we have that $A$ has property max-$\u$ if and only if $P^{\max}_{A^\u,n}=\lim_\u P^{\max}_{A,n}$ for all $n$.
\end{prop}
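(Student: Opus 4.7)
The plan is to adapt essentially verbatim the proof of Proposition \ref{minsubalgebra}. The only conceptual point where the maximal case differs from the minimal case is that, unlike $\iota^{\min}_{A,\u}$ (which was only \textit{a priori} defined on the algebraic tensor product), the map $\iota^{\max}_{A,\u}:A^\u\otimes_{\max}A^\u\to (A\otimes_{\max}A)^\u$ is given in advance as a $*$-homomorphism between \cstar-algebras via the universal property of the maximal tensor product, and so is automatically contractive. Thus ``isometric embedding'' is equivalent to injectivity, and by density this can be tested on elements of $A^\u\odot A^\u$.

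First I would record the two key norm identities. For elements $a^k=(a^k_i)_\u,\,b^k=(b^k_i)_\u\in A^\u$, $k=1,\ldots,n$, the definition of $P^{\max}_{A^\u,n}$ gives
$$\left\|\sum_{k=1}^n a^k\otimes b^k\right\|_{A^\u\otimes_{\max}A^\u}=P^{\max}_{A^\u,n}(\vec a,\vec b),$$
while the standard computation of norms in a \cstar-algebra ultrapower yields
$$\left\|\iota^{\max}_{A,\u}\!\left(\sum_{k=1}^n a^k\otimes b^k\right)\right\|_{(A\otimes_{\max}A)^\u}=\lim_\u\left\|\sum_{k=1}^n a^k_i\otimes b^k_i\right\|_{A\otimes_{\max}A}=\lim_\u P^{\max}_{A,n}(\vec a_i,\vec b_i).$$

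Then I would conclude: the contractive $*$-homomorphism $\iota^{\max}_{A,\u}$ is an isometric embedding if and only if it is isometric on the dense $*$-subalgebra $A^\u\odot A^\u$, and by the two displayed identities this is precisely the assertion that $P^{\max}_{A^\u,n}=\lim_\u P^{\max}_{A,n}$ for every $n$. There is no serious obstacle here; the proof rests only on the definition of the maximal tensor norm and on the way norms are computed in ultraproducts, exactly as in the minimal case.
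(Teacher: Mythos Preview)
Your proposal is correct and follows exactly the approach the paper intends: the paper does not even supply a separate proof, merely stating that the result follows ``as in the case of minimal tensor products,'' i.e., by the same two displayed identities you wrote down. Your additional remark that $\iota^{\max}_{A,\u}$ is already a $*$-homomorphism (hence contractive) and therefore isometric iff isometric on the dense subalgebra $A^\u\odot A^\u$ is a helpful clarification, but not a departure from the paper's argument.
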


\begin{cor}
If $T$ is a theory of \cstar-algebras and $P^{\max}$ is $T$-definable, then for every model $A\models T$ and every ultrafilter $\u$, $A$ has property max-$\u$.
\end{cor}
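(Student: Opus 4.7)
The plan is to reduce the statement directly to the preceding proposition, which characterizes property max-$\u$ by the ultraproduct compatibility $P^{\max}_{A^\u,n}=\lim_\u P^{\max}_{A,n}$ for all $n$. Hence it suffices to derive this compatibility from the hypothesis that $P^{\max}$ is $T$-definable, after which the conclusion is immediate.

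First I would unpack the definition of $T$-definability: for each $n$ there is a $T$-formula $\varphi_n(\vec x,\vec y)$ realizing the $T$-function $P^{\max}_n$, in the sense that $\varphi_n^C(\vec c,\vec d)=P^{\max}_{C,n}(\vec c,\vec d)$ for every $C\models T$ and every $\vec c,\vec d$ in the operator norm unit ball of $C$. Since the class of models of $T$ is closed under ultrapowers, $A^\u\models T$, so both identities $\varphi_n^A=P^{\max}_{A,n}$ and $\varphi_n^{A^\u}=P^{\max}_{A^\u,n}$ are available.

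Second, I would invoke the fundamental theorem of ultraproducts (\L o\'s's theorem) for continuous logic applied to the $T$-formula $\varphi_n$: for any ultrapower $A^\u$ of a model $A\models T$ one has $\varphi_n^{A^\u}(\vec a,\vec b)=\lim_\u \varphi_n^A(\vec a_i,\vec b_i)$, where $\vec a=(\vec a_i)_\u$ and $\vec b=(\vec b_i)_\u$. Combining this with the identifications from the previous step yields $P^{\max}_{A^\u,n}=\lim_\u P^{\max}_{A,n}$ for all $n$. Applying the preceding proposition, $A$ has property max-$\u$.

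There is really no obstacle here; the argument is a direct formal consequence of the definitions together with the ultraproduct theorem, exactly parallel to the corollary following Proposition~\ref{minsubalgebra} in the minimal case. The only small point worth noting is that here the proposition is applied to an ultrapower of a single model $A$ (rather than a genuine ultraproduct), but the realizability of $P^{\max}_n$ by a $T$-formula holds for arbitrary ultraproducts of models of $T$, of which ultrapowers are a special case.
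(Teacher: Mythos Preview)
Your proof is correct and is essentially the same as the paper's implicit argument: the corollary is stated without proof in the paper because it follows immediately from the preceding theorem (the characterization of $T$-definability of $P^{\max}$ via ultraproduct compatibility) specialized to constant families, together with the preceding proposition. Your version simply unpacks this by invoking \L o\'s's theorem for the realizing formula $\varphi_n$, which is exactly the content of that characterization.
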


\begin{question}
If $A$ has property max-$\u$ for some nonprincipal ultrafilter $\u$, must it have property max-$\mathcal{V}$ for all nonprincipal ultrafilters $\mathcal{V}$?
\end{question}

We now aim to show that there are \cstar-algebras without property max-$\u$ for some $\u$ and thus that $P^{\max}$ is not definable with respect to the theory of all \cstar-algebras.  We begin by explaining some aspects of Pisier's ultraproduct characterization of the lifting property (LP) for \cstar-algebras \cite{pisier}.

First, recall that a \cstar-algebra $A$ has the \textbf{lifting property (LP)} if every ucp map $A\to C/J$ from $A$ into a quotient \cstar-algebra has a ucp lift $A\to C$.

Next, recall that, given any family $(D_i)_{i\in I}$ of \cstar-algebras, an ultrafilter $\u$ on $I$, and another \cstar-algebra $A$, there is a natural map $$\left(\prod_\u D_i\right)\otimes_{\max}A\to \prod_\u (D_i\otimes_{\max}A).$$

The following is \cite[Theorem 6.1]{pisier}:

\begin{fact}\label{pisierfact}
A \cstar-algebra $A$ has the LP if and only if, for any family $(D_i)_{i\in I}$ of \cstar-algebras and ultrafilter $\u$ on $I$, the natural map $$\left(\prod_\u D_i\right)\otimes_{\max}A\to \prod_\u (D_i\otimes_{\max}A)$$ is an isometric embedding.
\end{fact}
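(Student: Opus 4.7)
The statement is Pisier's characterization of the lifting property (LP), so my plan is necessarily a sketch of one natural strategy; the proof in the cited paper uses more refined operator-space techniques. Write $\Phi\colon (\prod_\u D_i)\otimes_{\max}A \to \prod_\u (D_i\otimes_{\max}A)$ for the natural *-homomorphism, which exists and is contractive by the universal property of $\otimes_{\max}$ applied to the commuting pair of canonical embeddings of $\prod_\u D_i$ and $A$ into the target. In both directions the task is to understand precisely when $\Phi$ is isometric.

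For the forward direction ($\Rightarrow$), suppose $A$ has the LP. Given a typical element $x=\sum_{k=1}^n d^k\otimes a^k$ with $d^k=(d^k_i)_\u$, the norm $\|\Phi(x)\|$ equals $\lim_\u \|\sum_k d^k_i\otimes a^k\|_{D_i\otimes_{\max}A}$, witnessed for each $i$ by a commuting pair $(\pi_i,\sigma_i)$ of representations of $D_i$ and $A$ on a Hilbert space $H_i$. The naive ultraproduct $(\pi_i)_\u$ descends to a representation of $\prod_\u D_i$ on $\prod_\u H_i$, but the family $(\sigma_i)$ yields only a ucp map from $A$ into an ultraproduct of commutants, not obviously a single honest *-representation. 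The LP of $A$ is invoked precisely here to lift this ucp map coherently to a single *-representation of $A$ that commutes with the representation of $\prod_\u D_i$ already produced. The resulting commuting pair then witnesses the estimate $\|x\|_{\max}\leq \|\Phi(x)\|$, giving isometry.

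For the backward direction ($\Leftarrow$), suppose $\Phi$ is isometric for every family and every ultrafilter. Given a surjective *-homomorphism $q\colon C\to B$ and a ucp map $\phi\colon A\to B$, I would construct an ultraproduct setup encoding approximate lifts of $\phi$ on increasing finite subsets of $A$: take $(C_i)_i$ a family of (finite-dimensional) approximants of $C$ along which such approximate lifts exist, and use the isometric embedding of $(\prod_\u C_i)\otimes_{\max}A$ into $\prod_\u (C_i\otimes_{\max}A)$ to assemble these into a genuine ucp map in the ultraproduct. In the separable setting, a diagonal / reflection argument then produces the desired ucp lift $A\to C$, establishing the LP.

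The main obstacle is the backward direction: converting the purely metric content of isometry of $\Phi$ into the algebraic content of liftability of ucp maps is the heart of the theorem and requires operator-system duality to relate max tensor norms to ucp maps. The forward direction is conceptually direct once the LP is correctly applied to reconcile ultraproducts of representations, but I would expect the backward implication to occupy the bulk of the proof in the cited paper, leveraging Pisier's substantial prior work on operator space structures, the Haagerup tensor product, and the WEP/LLP dichotomy to make this translation rigorous.
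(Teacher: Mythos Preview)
First, note that the paper does not prove this statement: it is recorded as a \emph{Fact} with a bare citation to \cite[Theorem 6.1]{pisier}, and the only further commentary is in the subsequent proposition, which unpacks how Pisier's argument factors through an auxiliary inequality labeled (0.1) together with \cite[Corollary 10.6 and Theorem 7.5]{pisier}. So there is no ``paper's own proof'' to compare against, only Pisier's.

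That said, your forward-direction sketch has a genuine gap: it runs in the wrong direction. You start from commuting pairs $(\pi_i,\sigma_i)$ witnessing the norms $\|\sum_k d^k_i\otimes a^k\|_{D_i\otimes_{\max}A}$ on the \emph{target} side, and then assemble them via the ultraproduct. But any commuting pair of $*$-homomorphisms with domain $\prod_\u D_i$ and $A$ produces, by the universal property, a $*$-homomorphism out of $(\prod_\u D_i)\otimes_{\max}A$; this can only certify $\|x\|_{\max}\geq \|\Phi(x)\|$, which is the automatic contractivity of $\Phi$, not the isometry you want. (Incidentally, the map $(\sigma_i)_\u:A\to \prod_\u B(H_i)$ is already a $*$-homomorphism, not merely ucp, so there is nothing for the LP to repair at that step.) The nontrivial inequality $\|x\|_{\max}\leq \|\Phi(x)\|$ requires starting from a representation of $(\prod_\u D_i)\otimes_{\max}A$ and relating it back to the coordinates $D_i\otimes_{\max}A$; this is where the LP genuinely enters, via the duality between states on $D\otimes_{\max}A$ and ucp maps $A\to D^{**}$, and the ability to lift such ucp maps through the quotient $\prod^{\ell^\infty}D_i\to \prod_\u D_i$. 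Your description of the LP as producing ``a single $*$-representation'' is also off: the LP yields ucp lifts, not $*$-homomorphic ones, and it is precisely the operator-space duality you allude to at the end that converts this into a norm estimate. Your backward-direction sketch gestures at a plausible strategy but is too schematic to assess; Pisier's actual route is rather different, passing through the inequality (0.1) and a separate equivalence with the LP.
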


For what follows, we will need a more precise version of the previous fact.  We say a \cstar-algebra $A$ is \textbf{surjectively universal} if, for all separable \cstar-algebras $B$, there is a surjective $*$-homomorphism $A\to B$.  For example, $C^*(\mathbb{F}_\infty)$ is surjectively universal.  Note that in this definition we do not require that $A$ itself is separable.  Also, given a directed set $(I,\leq)$, we say a nonprincipal ultrafilter $\u$ on $I$ is \textbf{cofinal} if all sets of the form $\{i\in I \ : \ i\geq i_0\}$ (as $i_0$ ranges over $I$) belong to $\u$. 

\begin{prop}\label{pisierprecise}
Suppose that $(D_n)_{n\in \mathbb{N}}$ is a family of surjectively universal \cstar-algebras.  Let $\hat {\mathbb{N}}$ denote the set of finite subsets of $\mathbb N$, viewed as a directed set under inclusion.  Let $\u$ be a cofinal ultrafilter on $\hat{\mathbb{N}}$.  Finally, suppose that $A$ is a \cstar-algebra for which the natural map $(\prod_\u D_J)\otimes_{\max}A\to \prod_\u (D_J\otimes_{\max}A)$ is an isometric embedding, where $D_J:=\ell^\infty(D_n \ : \ n\in J)$.  Then $A$ has the LP.
\end{prop}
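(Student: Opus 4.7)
By Fact \ref{pisierfact}, it suffices to deduce from the hypothesis that, for every family $(C_k)_{k \in K}$ of \cstar-algebras and every ultrafilter $\mathcal{V}$ on $K$, the natural map $\iota_\mathcal{V}: (\prod_\mathcal{V} C_k) \otimes_{\max} A \to \prod_\mathcal{V}(C_k \otimes_{\max} A)$ is isometric. Since $\iota_\mathcal{V}$ is always contractive, the substantive content is the reverse inequality: fixing $x = \sum_{j=1}^m \gamma_j \otimes a_j$ with $\gamma_j = (c_{j,k})_\mathcal{V}$, the goal is $\|x\|_{\max} \leq \lim_\mathcal{V}\|\sum_j c_{j,k} \otimes a_j\|_{\max}$.

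The plan is to transfer this generic ultraproduct situation back to the hypothesized one via the universality of the $D_n$'s. First, for each $k$ let $C_k'$ be the separable \cstar-subalgebra of $C_k$ generated by $c_{1,k}, \ldots, c_{m,k}$; some care is required here, since separable subalgebras do not in general preserve max tensor norms, so one should enlarge $C_k'$ so that the inclusion $C_k' \subseteq C_k$ is r.w.i., or alternatively argue directly on $C_k'$ after reducing the lifting problem to separable targets. Second, by surjective universality, for each $k$ choose some $n(k) \in \mathbb{N}$ and a surjection $q_k: D_{n(k)} \twoheadrightarrow C_k'$, lifting each $c_{j,k}$ to $d_{j,k} \in D_{n(k)}$. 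Third, use cofinality of $\u$—so that for each fixed $n$ the set $\{J \in \hat{\mathbb{N}} : n \in J\}$ belongs to $\u$—together with a diagonal matching $k \mapsto J(k)$ with $n(k) \in J(k)$ to assemble a $*$-homomorphism $\Psi: \prod_\u D_J \to \prod_\mathcal{V} C_k'$ along with an element $\tilde x \in (\prod_\u D_J) \odot A$ for which $(\Psi \otimes \id_A)(\tilde x) = x$.

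Once this setup is in place, the contractivity of $\Psi \otimes \id_A$ gives $\|x\|_{\max} \leq \|\tilde x\|_{\max}$; the hypothesis on $A$ then yields $\|\tilde x\|_{\max} = \lim_\u \|\sum_j d_{j,J} \otimes a_j\|_{D_J \otimes_{\max} A}$; and routine estimates relating $D_J = \ell^\infty(D_n : n \in J)$ to its coordinate factors, together with contractivity of each $q_k \otimes \id_A$, bound the right-hand side by $\lim_\mathcal{V}\|\sum_j c_{j,k} \otimes a_j\|_{\max}$, as desired.

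The main obstacle will be the construction of $\Psi$: one must match indices across $(\hat{\mathbb{N}}, \u)$ and $(K, \mathcal{V})$ so that the coordinate evaluations $D_J \to D_{n(k)}$ (well-defined on $\u$-large sets, thanks to cofinality) combine coherently with the surjections $q_k$ into a genuine $*$-homomorphism, and so that the element $\tilde x$ is indeed transported to $x$ under $\Psi \otimes \id_A$. The cofinality hypothesis on $\u$ and the $\ell^\infty$ product structure of $D_J$ are precisely the features that make this bookkeeping possible, ensuring that every finite subset of $\mathbb{N}$ is eventually ``visible'' in the $\u$-ultraproduct.
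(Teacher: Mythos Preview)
Your approach has a genuine gap at the construction of $\Psi:\prod_\u D_J\to\prod_{\mathcal V} C_k'$. Any coordinatewise construction via a function $k\mapsto J(k)$ descends to the ultraproducts only when the pushforward of $\mathcal V$ along this function equals $\u$; but $\u$ is a \emph{fixed} cofinal ultrafilter and $\mathcal V$ is arbitrary, so there is no reason $\u$ should lie Rudin--Keisler below $\mathcal V$, and in general no such function exists. Cofinality of $\u$ and the freedom in choosing $n(k)$ only let you arrange $n(k)\in J(k)$; they do nothing to force the pushforward ultrafilter to coincide with $\u$. There is also a direction problem in your final step: contractivity of $q_k\otimes\id_A$ gives $\|\sum_j c_{j,k}\otimes a_j\|\le\|\sum_j d_{j,k}\otimes a_j\|$, which is the opposite of what is needed to pass from $\lim_\u\|\sum_j d_{j,J}\otimes a_j\|$ down to $\lim_{\mathcal V}\|\sum_j c_{j,k}\otimes a_j\|$. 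Choosing lifts more cleverly does not help, since a near-optimal lift of $\sum_j c_{j,k}\otimes a_j$ in $D_{n(k)}\otimes_{\max}A$ need not be an elementary tensor sum $\sum_j d_{j,k}\otimes a_j$ with the \emph{same} $a_j$'s.

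The paper avoids both obstacles by not attempting a direct verification of Fact~\ref{pisierfact}. It observes instead that the proof of \cite[Theorem~6.1]{pisier} extracts from the hypothesis Pisier's ultrafilter-free inequality (0.1) for the family $(D_n)$; then \cite[Corollary~10.6]{pisier} bootstraps (0.1) from any surjectively universal family indexed by $\mathbb N$ to all families, and \cite[Theorem~7.5]{pisier} converts (0.1) for all families into the LP. The passage through the intermediate, ultrafilter-free inequality (0.1) is exactly what decouples the fixed $\u$ from the arbitrary $\mathcal V$---the step your direct transfer cannot bridge.
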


\begin{proof}
The proof of \cite[Theorem 6.1]{pisier} shows that the assumption of the proposition implies that the family $(D_n)_{n\in \mathbb{N}}$ and algebra $A$ satisfy an inequality labeled (0.1).  Moreover, \cite[Corollary 10.6]{pisier} shows that when $A$ satisfies (0.1) for a family of surjectively universal \cstar-algebras indexed by $\mathbb{N}$, then it holds for all families (this is shown when each $D_n=C^*(\mathbb{F}_\infty)$, but all that is actually used about $C^*(\mathbb{F}_\infty)$ is its surjective universality); and \cite[Theorem 7.5]{pisier} shows that satisfying (0.1) for all families is equivalent to the LP.
\end{proof}

\begin{prop}
Suppose that $A$ is a \cstar-algebra with the following three properties:
\begin{enumerate}
    \item $A$ is surjectively universal.
    \item $A\cong A\times A$.
    \item $A$ does not have the LP.
\end{enumerate}
Then for any cofinal ultrafilter $\u$ on $\hat{\mathbb{N}}$, we have that $A$ does not have property max-$\u$.  In particular, $P^{\max}$ is not $\Th(A)$-definable.
\end{prop}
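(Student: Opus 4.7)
The plan is to argue by contradiction: assume that $A$ has property max-$\u$ for some cofinal ultrafilter $\u$ on $\hat{\mathbb{N}}$, and derive that $A$ has the LP, contradicting hypothesis~(3).

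To this end, apply Proposition~\ref{pisierprecise} with the constant family $D_n := A$ for all $n \in \mathbb{N}$. Each $D_n$ is surjectively universal by~(1), and iterating the isomorphism $A \cong A\times A$ from~(2) yields $D_J = \ell^\infty(A : n\in J) \cong A^{|J|} \cong A$ for every nonempty finite $J \subseteq \mathbb{N}$ via fixed isomorphisms $\varphi_J$; note that $\u$-almost-all $J$ are nonempty by cofinality. These identifications produce canonical isomorphisms $\prod_\u D_J \cong A^\u$ and $\prod_\u (D_J \otimes_{\max} A) \cong (A \otimes_{\max} A)^\u$, and the natural Pisier map translates into the natural $*$-homomorphism
$$\eta \colon A^\u \otimes_{\max} A \to (A\otimes_{\max} A)^\u, \qquad (a_J)_\u \otimes b \mapsto (a_J \otimes b)_\u.$$

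The heart of the argument is to show that $\eta$ is isometric. Observe that $\eta$ factors as
$$A^\u \otimes_{\max} A \xrightarrow{\;\alpha\;} A^\u \otimes_{\max} A^\u \xrightarrow{\;\iota^{\max}_{A,\u}\;} (A\otimes_{\max} A)^\u,$$
where $\alpha$ is induced by the canonical inclusion $A \hookrightarrow A^\u$ on the second tensor factor, and $\iota^{\max}_{A,\u}$ is isometric by property max-$\u$. It therefore suffices to verify that $\alpha$ is isometric, equivalently that the inclusion $A \hookrightarrow A^\u$ is relatively weakly injective. This is the key technical obstacle; I would attack it by exploiting the iterated splittings $A \cong A^N$ supplied by~(2) together with the cofinality of $\u$ on $\hat{\mathbb{N}}$ to construct a ucp conditional expectation $A^\u \to A$, possibly invoking surjective universality from~(1) to secure the requisite ucp lifts.

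Once $\eta$ is known to be isometric, Proposition~\ref{pisierprecise} immediately yields that $A$ has the LP, contradicting~(3). The ``In particular'' clause is then immediate from the corollary preceding the proposition: if $P^{\max}$ were $\Th(A)$-definable, then $A$ would have property max-$\u$ for \emph{every} ultrafilter $\u$, including cofinal ones on $\hat{\mathbb{N}}$, contradicting the first statement.
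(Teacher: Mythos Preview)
Your overall strategy matches the paper's proof exactly: reduce to Proposition~\ref{pisierprecise} via the identifications $D_J\cong A$, factor the resulting map through $A^\u\otimes_{\max}A^\u$, and use property max-$\u$ for the second factor. The paper does precisely this.

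However, you have misidentified the difficulty. The step you flag as the ``key technical obstacle''---that the diagonal inclusion $A\hookrightarrow A^\u$ is relatively weakly injective---is in fact a general fact valid for \emph{every} \cstar-algebra $A$ and every ultrafilter $\u$; it requires neither hypothesis~(1) nor~(2), and certainly no construction of conditional expectations from splittings and cofinality. The paper simply cites this as \cite[Proposition~2(2)]{modeltheoryQWEP}. The argument is short: for any \cstar-algebra $C$, the composite
\[
A\otimes_{\max}C \longrightarrow A^\u\otimes_{\max}C \longrightarrow (A\otimes_{\max}C)^\u,
\]
where the second arrow sends $(a_i)_\u\otimes c$ to $(a_i\otimes c)_\u$ (this exists by the universal property of $\otimes_{\max}$), is precisely the diagonal embedding of $A\otimes_{\max}C$ into its ultrapower, hence isometric. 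Therefore the first arrow is isometric, which is exactly the r.w.i.\ statement. Once you know this, your map $\alpha$ is isometric for free, and the proof is complete as you outlined.
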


\begin{proof}
By (2), for each $J\in \hat{\mathbb{N}}$, we may fix an isomorphism $\psi_J:A_J\to A$.  If $A$ has property max-$\u$, we then have that: 
\begin{alignat}{2}
\left(\prod_\u A_J\right)\otimes_{\max} A &\cong A^\u\otimes_{\max}A\notag \\ \notag
                    &\subseteq A^\u\otimes_{\max}A^\u\\ \notag
                    &\subseteq (A\otimes_{\max} A)^\u\\ \notag
                    &\cong\prod_\u (A_J\otimes_{\max}A). \notag
\end{alignat}
Here, the first isomorphism is induced by $(\psi_J)_\u\otimes id_A$ while the last isomorphism is induced by $(\psi_J^{-1}\otimes id_A)_\u$.  Also, the first isometric inclusion uses that $A$ is r.w.i. in $A^\u$ (see \cite[Proposition 2(2)]{modeltheoryQWEP}) while the second isometric inclusion uses the fact that $A$ has property max-$\u$.
By Proposition \ref{pisierprecise}, we have that $A$ has the LP, a contradiction.
\end{proof}

It remains to show that there is a \cstar-algebra with the properties in the hypothesis of the previous proposition: 

\begin{lemma}
There is a surjectively universal \cstar-algebra without the lifting property for which $A\cong A\times A$.
\end{lemma}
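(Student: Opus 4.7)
The plan is to stabilize a known surjectively universal \cstar-algebra that fails the lifting property by forming an $\ell^\infty$-product, which automatically delivers $A \cong A \times A$ without disturbing the other two properties. Concretely, I would set $B := C^*(\mathbb{F}_\infty) \otimes_{\max} C^*(\mathbb{F}_\infty) \cong C^*(\mathbb{F}_\infty \times \mathbb{F}_\infty)$. This $B$ is surjectively universal because it surjects onto $C^*(\mathbb{F}_\infty)$ via the first-coordinate group quotient, and every separable \cstar-algebra is a quotient of $C^*(\mathbb{F}_\infty)$. By a theorem of Pisier, $B$ fails the LP. Now take $A := \ell^\infty(\mathbb{N}, B)$, the unital \cstar-algebra of bounded sequences from $B$. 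Evaluation at any coordinate gives a surjection $A \to B$, so $A$ is surjectively universal; and the bijection $\mathbb{N} \cong \mathbb{N} \sqcup \mathbb{N}$ produces an isomorphism $A \cong A \times A$.

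The remaining task is to verify that $A$ itself fails LP. I would first record the general fact that LP passes to unital direct summands: if $A = B \times A'$, then $B$ sits inside $A$ via the unital $*$-homomorphism $\iota(b) := (b, 1_{A'})$, split by the coordinate projection $\pi : A \to B$. Given any ucp map $\phi : B \to C/J$, the composition $\phi \circ \pi$ is ucp, and any ucp lift of it produced by LP of $A$ yields, after precomposing with $\iota$, a ucp lift of $\phi$ itself. Applying this with $A = B \times \ell^\infty(\{n \ge 2\}, B)$ shows that LP of $A$ would entail LP of $B$, contradicting the choice of $B$.

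The one step that does nontrivial work is invoking Pisier's theorem that $C^*(\mathbb{F}_\infty) \otimes_{\max} C^*(\mathbb{F}_\infty)$ fails the LP; this is the main external input the argument rests on, and is the place where a reader needs to actually trust outside literature. The rest is essentially a packaging exercise around that fact, together with the two elementary observations that an $\ell^\infty$-product indexed by $\mathbb{N}$ is isomorphic to its square and that LP descends to unital direct summands via a composition-and-restriction argument.
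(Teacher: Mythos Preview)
Your overall architecture---take a separable, surjectively universal \cstar-algebra $B$ without the LP and form $A:=\ell^\infty(\mathbb{N},B)$---is exactly the paper's strategy, and your argument that LP passes to unital direct summands is a clean, elementary substitute for the paper's use of relative weak injectivity plus separability. That part is fine.

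The genuine gap is your choice of $B$. You assert that $C^*(\mathbb{F}_\infty)\otimes_{\max}C^*(\mathbb{F}_\infty)\cong C^*(\mathbb{F}_\infty\times\mathbb{F}_\infty)$ fails the LP ``by a theorem of Pisier,'' but no such theorem exists: whether $C^*(\mathbb{F}_n\times\mathbb{F}_n)$ has the LP is a well-known \emph{open} question of Ozawa, and the paper itself records it as such later on (and even proves a corollary explaining why it is hard). Pisier's paper gives an ultraproduct characterization of the LP and exhibits some group \cstar-algebras without it, but $C^*(\mathbb{F}_n\times\mathbb{F}_n)$ is not among the known examples. So the one external input your whole argument rests on is not available.

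The paper circumvents this by building $B$ differently: it starts from any countable group $G$ with $C^*(G)$ failing the LP (such groups are known, e.g.\ via Ozawa or Pisier), forms the free product $C^*(\mathbb{F}_\infty)\ast C^*(G)\cong C^*(\mathbb{F}_\infty\ast G)$, and shows this is surjectively universal and still fails the LP. The latter step uses Boca's theorem that free products of ucp maps are ucp, so that a hypothetical ucp lift of the free product would restrict to a ucp lift for $C^*(G)$. If you replace your $B$ by $C^*(\mathbb{F}_\infty\ast G)$ and insert this Boca argument, your proof goes through; as written, it does not.
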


\begin{proof}
Suppose that $G$ is a countable discrete group for which $A':=C^*(G)$ does not have the LP (see, for example, \cite[Corollary 5]{ozawanouniversal} or \cite[Section 9]{pisier}).  Note that $C^*(\mathbb{F}_\infty)*A'\cong C^*(\mathbb{F}_\infty*G)$ is surjectively universal.  We claim that $C^*(\mathbb{F}_\infty)*A'$ does not have the LP.  To see this, suppose that $A'\to B/J$ is a ucp map without a ucp lift.  Take any $*$-homomorphism $C^*(\mathbb{F}_\infty)\to B/J$.  By a result of Boca \cite{boca}, we have a ccp map $C^*(\mathbb{F}_\infty)*A'\to B/J$.  If $C^*(\mathbb{F}_\infty)*A'$ had the LP, we would have a ucp lift $C^*(\mathbb{F}_\infty)*A'\to B$, which then restricts to a ucp lift $A'\to B$, yielding a contradiction.

For each $n\in \mathbb N$, set $A_n:=C^*(\mathbb F_\infty)*A'$.  Set $A:=\ell^\infty(A_n \ : \ n\in \mathbb N)$.  Since there is a surjective $*$-homomorphism $A\to C^*(\mathbb{F}_\infty)$, we have that $A$ is surjectively universal.  If $A$ had the LP, then since $C^*(\mathbb F_\infty)*A'$ is separable and r.w.i. in $A$, if $A$ had the lifting property, then so would $C^*(\mathbb F_\infty)*A'$, a contradiction to the above paragraph.  Finally, by design, we have that $A\cong A\times A$.
\end{proof}

The \cstar-algebra constructed in the previous lemma is not separable.  This raises the question:

\begin{question}
Is there a \emph{separable}, surjectively universal \cstar-algebra $A$ without the lifting property for which $A\cong A\times A$?
\end{question}

The previous question notwithstanding, there is a way to use these ideas to get a separable \cstar-algebra $A$ that does not have property $\max-\u$ for any cofinal ultrafilter $\u$ on $\hat{\mathbb{N}}$.  Once again, fix a countable group $G$ such that $C^*(G)$ does not have the LP.  Let $D:=C^*(\mathbb{F}_\infty*G)$.  As above, $D$ is a surjectively universal group without the LP.  For each $n\in \mathbb N$, set $D_n=D$ and set $A:=\ast_J D_J$.  Now $D_J$ is r.w.i. in $A$ as each $D_K$ is surjectively universal.  In particular, $A$ does not have the LP (else $D_0=D$ would have the LP).

We will need the following lemma.  We speculate that this result might be known amongst experts, but our proof uses Fact \ref{pisierfact}, which is rather recent.

\begin{lemma}\label{rwiultra}
Suppose that $(A_i)_{i\in I}$ and $(B_i)_{i\in I}$ are two families of \cstar-algebras for which $A_i\subseteq B_i$ is r.w.i. for each $i\in I$.  Then for any ultrafilter $\u$ on $I$, one has $\prod_\u A_i\subseteq \prod_\u B_i$ is also r.w.i.    
\end{lemma}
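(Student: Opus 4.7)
The plan is to verify, for every \cstar-algebra $C$, that the canonical $*$-homomorphism $\Phi_C : \left(\prod_\u A_i\right) \otimes_{\max} C \to \left(\prod_\u B_i\right) \otimes_{\max} C$ is isometric. Since $\Phi_C$ is a $*$-homomorphism, only the lower bound $\|\Phi_C(z)\| \geq \|z\|$ requires work.

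I would first handle the case in which $C$ has the LP. Fact \ref{pisierfact} then supplies isometric embeddings
\[
\alpha_A : \left(\prod_\u A_i\right) \otimes_{\max} C \hookrightarrow \prod_\u (A_i \otimes_{\max} C)
\]
and analogously $\alpha_B$. Pointwise r.w.i.\ gives that each map $A_i \otimes_{\max} C \to B_i \otimes_{\max} C$ is isometric, and since ultraproducts of pointwise isometric $*$-homomorphisms remain isometric, the ultraproduct map $\Psi : \prod_\u(A_i \otimes_{\max} C) \to \prod_\u(B_i \otimes_{\max} C)$ is likewise isometric. As $\Psi \circ \alpha_A = \alpha_B \circ \Phi_C$, three of the four arrows of the resulting commuting square are isometric, forcing $\Phi_C$ to be isometric.

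For a general $C$, I would pick a \cstar-algebra $\tilde C$ with the LP and a surjection $\pi : \tilde C \twoheadrightarrow C$ (for example, $\tilde C = C^*(\mathbb{F}_S)$ with $S$ a generating set of $C$). By the previous paragraph, $\Phi_{\tilde C}$ is isometric. Exactness of the maximal tensor product in the second variable identifies $\left(\prod_\u A_i\right) \otimes_{\max} C$ and $\left(\prod_\u B_i\right) \otimes_{\max} C$ as the quotients of their $\otimes_{\max} \tilde C$ counterparts by the closed ideals generated by $\ker\pi$; isometry of $\Phi_C$ thus reduces to the ideal-intersection statement
\[
\left(\prod_\u A_i\right) \otimes_{\max} \tilde C \;\cap\; \overline{\left(\prod_\u B_i\right) \odot \ker\pi} \;=\; \overline{\left(\prod_\u A_i\right) \odot \ker\pi}
\]
inside $\left(\prod_\u B_i\right) \otimes_{\max} \tilde C$. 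I would establish this by pushing both sides through the Pisier isometries $\left(\prod_\u \bullet\right) \otimes_{\max} \tilde C \hookrightarrow \prod_\u (\bullet \otimes_{\max} \tilde C)$ (available since $\tilde C$ has the LP), and then assembling the pointwise intersection statements inside $A_i \otimes_{\max} \tilde C \subseteq B_i \otimes_{\max} \tilde C$, which follow from pointwise r.w.i.\ at $C$ together with pointwise exactness of the maximal tensor product.

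The main obstacle will be this final descent through the quotient: carefully assembling the pointwise intersection statements across the ultraproduct in a way compatible with the Pisier isometries requires delicate control over the closures involved. It is this subtlety that forces the proof to invoke Fact \ref{pisierfact} rather than to rely on more elementary ultraproduct manipulations.
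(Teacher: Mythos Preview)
Your LP-case argument is correct and is exactly the argument the paper gives. The difference is that the paper does not attempt to treat a general test algebra $C$ at all: it invokes Ozawa's characterization of relative weak injectivity (\cite[Theorem 3.3]{OzQ}), which says that $A\subseteq B$ is r.w.i.\ if and only if the single map $A\otimes_{\max}C^*(\mathbb{F}_\infty)\to B\otimes_{\max}C^*(\mathbb{F}_\infty)$ is isometric. Since $C^*(\mathbb{F}_\infty)$ has the LP, your first paragraph (with $C=C^*(\mathbb{F}_\infty)$) already finishes the proof. This is what the paper's commuting square does, and nothing more is needed.

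Your second paragraph, by contrast, has a genuine gap. After pushing through the Pisier isometries $\alpha_A,\alpha_B$, the ideal-intersection statement you need reduces to showing that
\[
\alpha_A^{-1}\Bigl(\textstyle\prod_\u \overline{A_i\odot\ker\pi}\Bigr)=\overline{(\textstyle\prod_\u A_i)\odot\ker\pi},
\]
equivalently that the canonical map $(\prod_\u A_i)\otimes_{\max}C\to\prod_\u(A_i\otimes_{\max}C)$ is injective. But this is precisely the kind of statement Fact~\ref{pisierfact} governs, and it is not available for $C$ without the LP; you have in effect reintroduced the very hypothesis you were trying to remove. The ``delicate control over the closures'' you flag is not a technicality but the actual content of the lemma, and the route you sketch does not supply it. The fix is simply to drop the general-$C$ reduction and cite Ozawa's criterion, as the paper does.
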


\begin{proof}
By \cite[Theorem 3.3]{OzQ}, it suffices to show that the natural map $$\left(\prod_\u A_i\right)\otimes C^*(\mathbb F_\infty)\to \left(\prod_\u B_i\right)\otimes_{\max} C^*(\mathbb{F}_\infty)$$ is isometric.  But this follows from Fact \ref{pisierfact} since $C^*(\mathbb F_\infty)$ has the LP, as the following diagram indicates:
\[
\begin{tikzcd}
(\prod_\u B_i)\otimes_{\max}C^*(\mathbb{F}_\infty) \arrow[r,hook] & \prod_\u (B_i\otimes_{\max}C^*(\mathbb{F}_\infty))\\
(\prod_\u A_i)\otimes_{\max}C^*(\mathbb{F}_\infty)\arrow[u] \arrow[r,hook] &\prod_\u (A_i\otimes_{\max}C^*(\mathbb{F}_\infty)) \arrow[u,hook]
\end{tikzcd}
\]
\end{proof}

Returning to the previous context, we thus have that $\prod_\u D_j\subseteq A^\u$ is r.w.i.  As the following diagram shows, we have that $(\prod_\u D_j)\otimes_{\max} A\subseteq \prod_\u (D_j\otimes_{\max}A)$ is isometric, implying that $A$ has the LP, a contradiction.
\[
\begin{tikzcd}
     A^\u\otimes_{\max} A \arrow[r] & A^\u\otimes_{\max} A^\u \arrow[r, hook] & \prod_\u (A\otimes_{\max}A) \\
    & (\prod_\u D_J)\otimes_{\max}A \arrow[u, hook] \arrow[r] & \prod_\u (D_J\otimes_{\max}A)\arrow[u, hook]
\end{tikzcd}
\]

Summarizing this discussion, we have the following:

\begin{thm}
Suppose that $G$ is a countable group such that $C^*(G)$ does not have the LP.  Set $D:=C^*(\mathbb{F}_\infty*G)$ and for each $n\in \mathbb N$, set $D_n=D$.  Finally, set $A:=\ast_J D_J$.  Then for any cofinal ultrafilter $\u$ on $\hat{\mathbb{N}}$, we have that $A$ does not have property max-$\u$.
\end{thm}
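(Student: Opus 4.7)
The plan is to argue by contradiction: assume $A$ has property $\max$-$\u$ and derive that $A$ has the lifting property (LP), contradicting what has just been established about $A$.

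First I would record why $A$ cannot have the LP. Since each $D_K$ is surjectively universal, the free product factor $D = D_{\{0\}} \subseteq A = \ast_J D_J$ is r.w.i. in $A$, and $D$ is separable and (by construction, via the Boca free product argument used earlier) fails the LP. Because separable r.w.i. subalgebras inherit the LP (the same reasoning applied previously to $C^*(\mathbb F_\infty)\ast A'\subseteq \ell^\infty(A_n)$), this forces $A$ not to have the LP. This is the contradiction I aim for.

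Second, assuming property $\max$-$\u$, the goal becomes verifying the hypothesis of Proposition \ref{pisierprecise} for the constant family $D_n = D$ and the algebra $A$. Each $D_n$ is surjectively universal, so the proposition will apply as soon as I exhibit that the natural map $(\prod_\u D_J) \otimes_{\max} A \to \prod_\u (D_J \otimes_{\max} A)$ is an isometric embedding. To show this I would chase the commutative diagram already displayed just before the theorem: the top row $A^\u \otimes_{\max} A \to A^\u \otimes_{\max} A^\u \hookrightarrow (A \otimes_{\max} A)^\u$ is isometric because $A \subseteq A^\u$ is r.w.i. (giving the first arrow) and by the assumption of property $\max$-$\u$ (giving the second). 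The left vertical inclusion $(\prod_\u D_J) \otimes_{\max} A \hookrightarrow A^\u \otimes_{\max} A$ is isometric because $\prod_\u D_J \subseteq A^\u$ is r.w.i., obtained by applying Lemma \ref{rwiultra} to the r.w.i. inclusions $D_J \subseteq A$. The right vertical inclusion $\prod_\u (D_J \otimes_{\max} A) \hookrightarrow (A \otimes_{\max} A)^\u$ is isometric for the same reason applied coordinatewise. Commutativity of the diagram forces the bottom arrow to be isometric, and then Proposition \ref{pisierprecise} yields the LP for $A$, the desired contradiction.

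The main obstacle I expect is the bookkeeping: checking that each natural map in the diagram agrees with the intended one (so that commutativity genuinely holds) and verifying that the r.w.i. facts transfer through the ultraproduct on the left-hand side via Lemma \ref{rwiultra}. The conceptual content is already packaged into Lemma \ref{rwiultra} and Proposition \ref{pisierprecise}; once these are correctly invoked, the argument is just an assembly of previously established ingredients.
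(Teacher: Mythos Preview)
Your proposal is correct and follows essentially the same approach as the paper: establish that $A$ fails the LP via the r.w.i.\ inclusion of $D$ in $A$, then assume property $\max$-$\u$ and chase the very diagram you describe (using Lemma \ref{rwiultra} for the r.w.i.\ of $\prod_\u D_J$ in $A^\u$ and of $A$ in $A^\u$, and the assumed property $\max$-$\u$ for the embedding into $(A\otimes_{\max}A)^\u$) to verify the hypothesis of Proposition \ref{pisierprecise}, obtaining the contradiction. The only cosmetic difference is that the paper routes the left vertical directly into $A^\u\otimes_{\max}A^\u$ rather than through $A^\u\otimes_{\max}A$, but this is the same map.
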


We end this section with a couple of further observations about property max-$\u$.  The following is the analog of Proposition \ref{minsubalgebra}:

\begin{prop}\label{maxsubalgebra}
If $B$ has property max-$\u$ and $A\subseteq B$ is r.w.i., then $A$ has property max-$\u$.
\end{prop}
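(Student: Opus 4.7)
The plan is to exploit the commutative square
\[
\begin{tikzcd}
A^\u\otimes_{\max}A^\u \arrow[r, "\iota_{A,\u}^{\max}"] \arrow[d] & (A\otimes_{\max}A)^\u \arrow[d] \\
B^\u\otimes_{\max}B^\u \arrow[r, "\iota_{B,\u}^{\max}"] & (B\otimes_{\max}B)^\u
\end{tikzcd}
\]
induced by the inclusion $A\subseteq B$, and to show that both vertical arrows are isometric embeddings. Since the bottom arrow is an isometric embedding by the hypothesis that $B$ has property max-$\u$, a standard diagram chase will then force $\iota_{A,\u}^{\max}$ to be isometric as well, which is precisely the statement that $A$ has property max-$\u$.

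For the right vertical arrow: applying the r.w.i.\ hypothesis of $A\subseteq B$ with $C=A$ shows that $A\otimes_{\max}A\hookrightarrow B\otimes_{\max}A$ is isometric; applying it again with $C=B$ (and using symmetry of the maximal tensor product in its two arguments) shows $B\otimes_{\max}A\hookrightarrow B\otimes_{\max}B$ is isometric. Composing and passing to ultraproducts, we obtain that $(A\otimes_{\max}A)^\u \hookrightarrow (B\otimes_{\max}B)^\u$ is isometric. For the left vertical arrow: Lemma \ref{rwiultra} (applied to the constant families $A_i=A$ and $B_i=B$) gives that $A^\u\subseteq B^\u$ is r.w.i., and the same two-step argument applied to this inclusion yields that $A^\u\otimes_{\max}A^\u \hookrightarrow B^\u\otimes_{\max}B^\u$ is isometric.

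Commutativity of the square is immediate on elementary tensors: both compositions send $\sum_{k=1}^n a^k\otimes b^k$ (with $a^k=(a^k_i)_\u$, $b^k=(b^k_i)_\u$ in $A^\u\subseteq B^\u$) to the class of $\sum_{k=1}^n a^k_i\otimes b^k_i$ in $(B\otimes_{\max}B)^\u$. Hence for any $x\in A^\u\otimes_{\max}A^\u$,
\[
\|\iota_{A,\u}^{\max}(x)\|_{(A\otimes_{\max}A)^\u} = \|\iota_{A,\u}^{\max}(x)\|_{(B\otimes_{\max}B)^\u} = \|\iota_{B,\u}^{\max}(x)\|_{(B\otimes_{\max}B)^\u} = \|x\|_{B^\u\otimes_{\max}B^\u} = \|x\|_{A^\u\otimes_{\max}A^\u},
\]
giving the conclusion.

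The only genuinely non-trivial ingredient is the fact that r.w.i.\ is preserved under ultraproducts, i.e.\ Lemma \ref{rwiultra}, but this has already been established; everything else is a formal diagram chase. The main point to be careful about is the isometric inclusion $A\otimes_{\max}A\hookrightarrow B\otimes_{\max}B$, which is not automatic from a single application of r.w.i.\ but requires the two-step argument above (first tensor with $A$ on the right, then with $B$ on the left).
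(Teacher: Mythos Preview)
Your proof is correct and follows essentially the same approach as the paper: the paper's argument is a terse ``this follows from the diagram and Lemma~\ref{rwiultra}'', drawing the same commutative square (with the vertical arrows oriented upward rather than downward). You have simply spelled out the details the paper leaves implicit, in particular the two-step r.w.i.\ argument needed to get $A\otimes_{\max}A\hookrightarrow B\otimes_{\max}B$ and $A^\u\otimes_{\max}A^\u\hookrightarrow B^\u\otimes_{\max}B^\u$ isometrically.
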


\begin{proof}
This follows from the following diagram and Lemma \ref{rwiultra}.
\[
\begin{tikzcd}
B^\u\otimes_{\max} B^\u \arrow[r,hook] & (B\otimes_{\max}B)^\u\\
A^\u\otimes_{\max} A^\u \arrow[u,hook] \arrow[r]& (A\otimes_{\max}A)^\u\arrow[u,hook]
\end{tikzcd}
\]
\end{proof}

\begin{cor}
If $A$ has property max-$\u$ for all ultrafilters $\u$, then so does every \cstar-algebra elementarily equivalent to $A$.
\end{cor}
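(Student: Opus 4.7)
The plan is a two-step reduction: first use Keisler--Shelah to move the question from $B$ to an ultrapower of $A$, and then realize iterated ultrapowers as single ultrapowers via a product ultrafilter in order to apply the hypothesis. Fix $B$ elementarily equivalent to $A$ and an ultrafilter $\mathcal V$ on some index set $J$; the goal is to show $B$ has property max-$\mathcal V$. By the Keisler--Shelah theorem in continuous logic, there is an ultrafilter $\u$ on some index set $I$ together with an isomorphism $A^\u \cong B^\u$. Since property max-$\mathcal V$ is invariant under isomorphism, it will suffice to prove that $A^\u$ has property max-$\mathcal V$: for then $B^\u$ does, and Proposition \ref{maxsubalgebra} applied to the r.w.i.\ inclusion $B \subseteq B^\u$ (available from \cite[Proposition 2(2)]{modeltheoryQWEP}) transfers the conclusion to $B$.

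The core claim is thus: if $A$ has max-$\u$ for every $\u$, then $A^\u$ has max-$\mathcal V$ for every $\u$ and $\mathcal V$. To prove this I would identify the iterated ultrapower $(A^\u)^{\mathcal V}$ with the single ultrapower $A^{\mathcal W}$, where $\mathcal W := \mathcal V \otimes \u$ denotes the product ultrafilter on $J \times I$, via the natural map $((a_{j,i})_\u)_{\mathcal V} \mapsto (a_{j,i})_{\mathcal W}$. Using the Fubini identity $\lim_{\mathcal W} f = \lim_{\mathcal V} \lim_\u f$ for bounded real-valued $f$, I then chain:
\[
P^{\max}_{(A^\u)^{\mathcal V}, n} \;=\; P^{\max}_{A^{\mathcal W}, n} \;=\; \lim_{\mathcal W} P^{\max}_{A, n} \;=\; \lim_{\mathcal V} \lim_\u P^{\max}_{A, n} \;=\; \lim_{\mathcal V} P^{\max}_{A^\u, n},
\]
where the second equality invokes that $A$ has max-$\mathcal W$ and the final equality invokes (at each fixed $j$) that $A$ has max-$\u$. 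This is exactly the condition for $A^\u$ to have property max-$\mathcal V$.

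The main technical point, which amounts to bookkeeping, is verifying the \cstar-algebra isomorphism $(A^\u)^{\mathcal V} \cong A^{\mathcal W}$ and the accompanying Fubini identity for product ultrafilters. Both are continuous-logic analogues of classical facts about iterated ultrapowers: the isomorphism is immediate from unpacking representative sequences, and the Fubini identity is the defining property of the product ultrafilter applied to bounded real-valued functions. Once these are in hand, the rest is the short chain of equalities displayed above.
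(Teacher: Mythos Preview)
Your proposal is correct and follows essentially the same approach as the paper: reduce via Keisler--Shelah and Proposition \ref{maxsubalgebra} to showing that ultrapowers of $A$ inherit the property, and then use the identification of an iterated ultrapower with a single ultrapower by a product ultrafilter, invoking the hypothesis twice. The only cosmetic difference is that the paper phrases the core step as a commuting-diagram chase with the maps $\iota^{\max}$, whereas you use the equivalent $P^{\max}$ characterization together with the Fubini identity for the product ultrafilter; the content is identical.
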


\begin{proof}
As was already pointed out, the inclusion of an algebra in its ultrapower is r.w.i.  Thus, by Proposition \ref{maxsubalgebra} and the Keisler-Shelah theorem, it suffices to show that if $A$ has property max-$\u$ for all $\u$, then so does $A^\mathcal{V}$ for all ultrafilters $\mathcal{V}$.  This follows from the following diagram:
\[
\begin{tikzcd}
(A^{\mathcal{V}})^\u\otimes_{\max}(A^{\mathcal{V}})^\u\arrow[r,"\cong"] \arrow[d] &A^{\mathcal{V}\times \u}\otimes_{\max} A^{\mathcal{V}\times \u}\arrow[r,hook] &(A\otimes_{\max}A)^{\mathcal{V}\times \u}\arrow["\cong",to=Z]\\
(A^{\mathcal{V}}\otimes_{\max}A^{\mathcal{V}})^\u \arrow[hook,to=Z]&|[alias=Z]|((A\otimes_{\max}A)^{\mathcal{V}})^\u
\end{tikzcd}
\]
Here, the inclusion in the top row uses the fact that $A$ has property max-$\mathcal{V}\times \u$ while the inclusion in the bottom row uses that $A$ has property max-$\mathcal{V}$.  Finally, the downward arrow uses the fact that there are natural commuting maps $(A^{\mathcal{V}})^\u\to (A^{\mathcal{V}}\otimes_{\max}A^{\mathcal{V}})^\u$ induced by the natural commuting maps $A^{\mathcal{V}}\to A^{\mathcal{V}}\otimes_{\max}A^{\mathcal{V}}$.
\end{proof}

\section{The local case}

\subsection{Preliminaries on nonlocal games}

Throughout this section, $k$ and $n$ denote natural numbers that are at least $2$ and $[k]$ denotes the set $\{1,\ldots,k\}$ (and likewise for $[n]$).

We recall the following definitions from quantum information theory.  First, recall that a \textbf{positive operator-valued measure (POVM)} on a Hilbert space $H$ is a sequence $(A_1,\ldots,A_n)$ of positive operators on $H$ such that $A_1+\cdots+A_n=I_H$; we refer to $n$ as the length of the POVM.\footnote{In the previous section, we were using $A$'s for \cstar-algebras; in the quantum complexity literature, $A$ is more commonly used in connection with a POVM.  Thus, in this section, we will switch notation and use $C$'s and $D$'s for \cstar-algebras.}

\begin{defn}

\

\begin{enumerate}
\item The set $\cqs(k,n)$ of \textbf{quantum correlations} consists of the correlations of the form $p(a,b|x,y)= \langle A^x_a \otimes B^y_b\xi,\xi \rangle$ for $x,y\in [k]$ and $a,b\in [n]$,
where H is a finite-dimensional Hilbert space, $\xi \in H \otimes H$ is a unit vector, and for every $x,y \in [k]$,
$(A^x_a: a\in [n])$ and $(B^y_b \ : \ b\in [n])$ are POVMs on H of length $n$.  We view $C_q(k,n)$ as a subset of $[0,1]^{k^2n^2}$.
\item We set $\cqa(n,m)$ to be the closure in $[0,1]^{k^2n^2}$ of $\cqs(k,n)$; the correclations in $\cqa(k,n)$ are called \textbf{quantum asymptotic correlations}.
\item The set $\cqc(k,n)$ of \textbf{quantum commuting correlations} consists of the correlations of the form $p(a,b|x,y)= \langle A^x_a B^y_b\xi,\xi \rangle$ for $x,y \in [k]$ and $a,b\in [n]$,
where H is a separable (possibly infinite-dimensional) Hilbert space, $\xi \in H$ is a unit vector, and for every $x,y\in [k]$,
$(A^x_a: a\in [n])$ and $(B^y_b \ : \ b\in [n])$ are POVMs on H for which $A^x_a B^y_b=B^y_bA^x_a$ for all $x,y\in [k]$ and $a,b\in [n]$.
\end{enumerate}
\end{defn}

Note that $C_{q}(k,n)\subseteq C_{qc}(k,n)$; since it is known that the latter set is closed, we in fact have that $C_{qa}(k,n)\subseteq C_{qc}(k,n)$.

\begin{defn}

\

\begin{enumerate}
    \item A \textbf{nonlocal game with $k$ questions and $n$ answers} is a pair $\frak G=(\pi,D)$, where $\pi$ is a probability distribution on $[k]\times [k]$ and $$D:[k]\times [k]\times [n]\times [n]\to \{0,1\}$$ is a function.
\item Given a nonlocal game $\frak G$ as in the previous item and $p\in [0,1]^{k^2n^2}$, we define the \textbf{value of $\frak G$ at $p$} to be $$\val(\frak G,p):=\sum_{x,y\in [k]}\pi(x,y)\sum_{a,b\in [n]}D(x,y,a,b)p(a,b|x,y).$$
\end{enumerate}
\end{defn}

\begin{defn}
Given a nonlocal game $\frak G$ with $k$ questions and $n$ answers, we define:
\begin{enumerate}
    \item the \textbf{entangled value of $\frak G$} to be 
    $$\val^*(\frak G):=\sup_{p\in \cqa(k,n)}\val(\frak G,p);$$
    \item the \textbf{quantum commuting value of $\frak G$} to be $$\val^{co}(\frak G):=\sup_{p\in \cqc(k,n)}\val(\frak G,p).$$
\end{enumerate}
\end{defn}

We shall refer to the following results as ``MIP*=RE'' and ``MIP$^{co}$=coRE'' although they are really consequences of these results.

\begin{thm}[\cite{MIP*}]\label{MIP*=RE}
There is no algorithm such that, upon input a nonlocal game $\frak G$ with the promise that $\val^*(\frak G)\leq 1/2$ or $\val^*(\frak G)=1$, decides which of the two cases holds. 
\end{thm}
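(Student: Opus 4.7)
The plan is to prove the theorem by reducing the Halting Problem to the stated promise problem. Concretely, I would exhibit a computable function $M \mapsto \frak G_M$ from Turing machines (say, on blank input) to nonlocal games such that $\val^*(\frak G_M) = 1$ whenever $M$ halts and $\val^*(\frak G_M) \leq 1/2$ whenever $M$ runs forever. Since halting is undecidable, any decision procedure meeting the hypotheses of the theorem would yield one for halting, a contradiction.

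The substance of the proof lies in building such a reduction. The natural framework is multiprover interactive proofs with entangled provers (MIP$^*$), and what must be shown is that every computably enumerable language admits a one-round, two-prover MIP$^*$ protocol with perfect completeness and soundness at most $1/2$. Standard ingredients I would assemble include: (i) a protocol for a canonical RE-complete problem such as Turing machine acceptance; (ii) quantum low-degree tests together with rigidity/self-testing results for EPR-pair strategies, which force honest provers to share essentially the maximally entangled state on a large Hilbert space and to apply prescribed measurements; and (iii) an ``introspection'' mechanism whereby the provers themselves sample the verifier's questions on their shared entanglement, so that the verifier's per-round computation can be made exponentially smaller than the length of the provers' computation.

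The main obstacle, and the technical heart of the argument, is a gap-preserving compression theorem for nonlocal games: given any efficient MIP$^*$ verifier for a language $L$, produce a new verifier of much smaller description that decides $L$ while preserving the completeness-soundness gap. Iterating this compression in a diagonal fashion --- in direct analogy with Turing's construction of a non-halting instance from a purported halting oracle --- produces a uniform computable family $\frak G_M$ that recognizes the halting set, yielding the theorem. Making the compression robust enough to survive repeated composition requires quantum analogs of PCP-style soundness amplification and of question-answer reduction that remain stable under self-reference, and this is where essentially all of the quantum information-theoretic machinery has to be marshalled; I expect this compression step to be by far the hardest part of the proof.
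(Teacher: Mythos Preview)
Your proposal is a reasonable high-level sketch of the argument in the cited reference \cite{MIP*}, but there is nothing to compare it to in this paper: the theorem is stated purely as a citation and given no proof here. The authors invoke MIP$^*$=RE as a black box (alongside the companion result MIP$^{co}$=coRE) and use only the undecidability conclusion, not any of the internal machinery. So the ``paper's own proof'' is simply the bibliographic reference.

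If anything, your outline is a fair summary of the strategy of Ji--Natarajan--Vidick--Wright--Yuen: a computable reduction $M\mapsto\mathfrak G_M$ from the halting problem, built via a gap-preserving compression theorem for MIP$^*$ protocols that relies on introspection, quantum low-degree/self-testing, and PCP-style soundness amplification. You correctly identify the compression step as the technical heart. That said, this is a sketch of a several-hundred-page argument, not something one would reproduce in the present paper; the appropriate response here is exactly what the authors did --- cite the result and move on.
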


\begin{thm}[\cite{MIPco}]\label{MIPco=coRE}
There is no algorithm such that, upon input a nonlocal game $\frak G$ with the promise that $\val^{co}(\frak G)\leq 1/2$ or $\val^*(\frak G)=1$, decides which of the two cases holds. 
\end{thm}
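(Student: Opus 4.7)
The plan is to invoke the main result of \cite{MIPco} as a black box and carry out a standard reduction from the halting problem. The content of \cite{MIPco} is a computable map $M \mapsto \mathfrak G_M$ from Turing machines to nonlocal games with the following gap property: if $M$ does not halt on the empty input, then $\val^{co}(\mathfrak G_M)=1$, realized by a concrete finite-dimensional tensor-product strategy (so in particular $\val^*(\mathfrak G_M)=1$); while if $M$ halts on the empty input, then $\val^{co}(\mathfrak G_M)\leq 1/2$. Granting this reduction, any algorithm deciding the promise problem in the theorem, composed with $M \mapsto \mathfrak G_M$, would decide whether $M$ halts, contradicting undecidability.

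The work is in establishing this reduction. The strategy in \cite{MIPco} is to start from a MIP$^*$ protocol for the halting problem as in \cite{MIP*} and modify it via gap-amplification and compression tailored to the quantum-commuting model. The end product is a protocol whose completeness in the non-halting case is witnessed by an explicit finite-dimensional entangled strategy, yielding simultaneously $\val^*(\mathfrak G_M)=1$ and $\val^{co}(\mathfrak G_M)=1$, while the soundness analysis in the halting case bounds $\val^{co}(\mathfrak G_M) \leq 1/2$, not merely $\val^*(\mathfrak G_M) \leq 1/2$. Note that swapping the YES and NO sides relative to MIP$^*$=RE is essential: a MIP$^*$=RE protocol for halting rejects non-halting machines with $\val^* \leq 1/2$, whereas here we need the non-halting (coRE) side to be accepted with value $1$.

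The main obstacle is precisely the design of a protocol whose soundness holds against commuting-operator strategies. Naive adaptations of MIP$^*$ arguments only bound $\val^*$ in the NO case, not the a priori larger quantity $\val^{co}$, and the operator-algebraic / representation-theoretic compression machinery needed to control $\val^{co}$ directly is the substance of \cite{MIPco}. Once this is granted, the reduction to the halting problem and the derivation of the promise-problem undecidability are routine; in particular, the upgrade from $\val^{co}(\mathfrak G_M)=1$ to $\val^*(\mathfrak G_M)=1$ in the YES case is a byproduct of the fact that the completeness strategy produced by the construction is finite-dimensional and of tensor-product form.
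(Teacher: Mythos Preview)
The paper does not contain a proof of this statement at all: Theorem~\ref{MIPco=coRE} is simply quoted from \cite{MIPco} (a reference listed as ``in preparation'') and used as a black box in the applications of Section~5. There is therefore no proof in the paper to compare your proposal against.

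Your outline is a reasonable high-level description of the shape such a result is expected to take---a computable reduction $M\mapsto\mathfrak G_M$ from Turing machines to nonlocal games with a gap in $\val^{co}$, completeness witnessed by an explicit finite-dimensional strategy (thereby also giving $\val^*(\mathfrak G_M)=1$), and soundness established against arbitrary commuting-operator strategies. But you should be aware that you are sketching the content of \cite{MIPco} itself, not anything the present paper attempts; the authors here treat the result purely as an imported theorem, in exactly the same way they treat $\mathrm{MIP}^*=\mathrm{RE}$ from \cite{MIP*}. In particular, nothing you have written can be checked against the paper, and the substantive claims in your second and third paragraphs (about gap amplification, compression tailored to the commuting model, and the finite-dimensionality of the completeness witness) are assertions about an unpublished manuscript that the present paper does not elaborate on.

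One small remark: you correctly noticed that the statement as written mixes $\val^{co}$ and $\val^*$ (the NO case uses $\val^{co}\leq 1/2$ while the YES case uses $\val^*=1$), and you accounted for this by arguing that the completeness strategy is finite-dimensional. Note, however, that when the paper actually invokes the theorem in the proof of Theorem~\ref{maxexplicit}, it phrases the promise as ``$\val^{co}(\mathfrak G)\leq 1/2$ or $\val^{co}(\mathfrak G)=1$''; since $\val^*=1$ implies $\val^{co}=1$, an algorithm deciding the latter promise problem also decides the former, so the application goes through either way.
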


\begin{fact}

\

\begin{enumerate}
    \item There is an algorithm such that, upon input a nonlocal game $\frak G$, produces a computable sequence of lower bounds to $\val^*(\frak G)$ that converges to $\val^*(\frak G)$.
    \item There is an algorithm such that, upon input a nonlocal game $\frak G$, produces a computable sequence of upper bounds to $\val^{co}(\frak G)$ that converges to $\val^{co}(\frak G)$.
\end{enumerate}
\end{fact}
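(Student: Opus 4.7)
The plan is to handle the two parts separately using standard techniques from quantum information theory.

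For part (1), the key observation is that since $\cqa(k,n)$ is the closure of $\cqs(k,n)$ in $[0,1]^{k^2n^2}$ and the map $p \mapsto \val(\frak G, p)$ is linear (hence continuous), we have
\[ \val^*(\frak G) = \sup_{p \in \cqs(k,n)} \val(\frak G, p). \]
Thus it suffices to approximate this supremum over finite-dimensional quantum strategies from below. I would fix an effective enumeration of all tuples consisting of a dimension $d \in \mathbb{N}$, POVMs $(A^x_a)$ and $(B^y_b)$ on $\mathbb{C}^d$ with entries in $\mathbb{Q}[i]$, and a unit vector $\xi \in \mathbb{C}^d \otimes \mathbb{C}^d$ with rational entries. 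For each such tuple one computes the rational number $\val(\frak G, p)$ exactly, where $p(a,b|x,y) := \langle A^x_a \otimes B^y_b \xi, \xi\rangle$. The $N$-th output $L_N$ is the maximum of these values over the first $N$ tuples. Density of rationals together with continuity of the value function guarantees $L_N \to \val^*(\frak G)$, and by construction $(L_N)$ is monotonically nondecreasing and computable.

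For part (2), I would invoke the Navascues--Pironio--Acin (NPA) semidefinite programming hierarchy adapted to the quantum commuting setting. At each level $\ell \in \mathbb{N}$, one solves a semidefinite program whose variables are the entries of a moment matrix $M$ indexed by words of length at most $\ell$ in the formal letters $\{A^x_a, B^y_b\}$, subject to positivity of $M$, the POVM relations encoded as linear constraints on the entries, and the commutation relations $A^x_a B^y_b = B^y_b A^x_a$ encoded as equalities of entries. The optimum $U_\ell$ is an upper bound on $\val^{co}(\frak G)$ since every genuine quantum commuting strategy yields a feasible moment matrix via $M_{w,w'} := \langle w(A,B)^* w'(A,B)\,\xi, \xi\rangle$. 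Conversely, a GNS-type reconstruction shows that in the limit $\ell \to \infty$, feasible moment data can be assembled into a state on the universal $C^*$-algebra with commuting POVMs, giving $U_\ell \downarrow \val^{co}(\frak G)$. Since each SDP has rational data, standard interior-point or ellipsoid methods produce $U_\ell$ to any prescribed rational accuracy, yielding a computable sequence of upper bounds.

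The proof is routine rather than novel: both parts rest on tools (enumeration of finite-dimensional strategies for (1); the NPA moment hierarchy for (2)) that are standard in the quantum correlations literature. The only place requiring real care is bookkeeping the word ``computable'': for (1) one must ensure the enumeration of rational POVMs and the arithmetic on $\val(\frak G, p)$ are fully effective, and for (2) one must solve the rational SDPs with guaranteed error bounds so that the computed $U_\ell$ still satisfy $U_\ell \downarrow \val^{co}(\frak G)$. Neither obstacle is substantive, but both deserve to be spelled out.
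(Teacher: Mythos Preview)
Your proposal is correct and matches the paper's approach: the paper itself does not give a detailed proof but simply remarks that the first algorithm is a ``brute force search'' over finite-dimensional strategies and the second uses the theory of semidefinite programming (i.e., the NPA hierarchy). Your write-up supplies the details the paper omits, and the bookkeeping concerns you flag are exactly the right ones.
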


The first algorithm is a simple ``brute'' force search while the second algorithm uses the theory of semidefinite programming.  Theorems \ref{MIP*=RE} and \ref{MIPco=coRE} above show that in item (1) of the previous fact, there is no algorithm for producing a computable sequence of upper bounds converging to $\val^*(\frak G)$, and likewise in item (2), there is no algorithm for producing a computable sequence of lower bounds converging to $\val^{co}(\frak G)$.

We will also need correlation sets defined using \cstar-algebras as introduced in \cite{tsirelson}.  In what follows, a POVM in a \cstar-algebra $C$ is a sequence $A_1,\ldots,A_n\in C$ of positive elements of $C$ such that $A_1+\cdots+A_n=1_C$; once again, we refer to $k$ as the length of the POVM.


\begin{defn}
Let $C_{\min}(C,D,k,n)$ (respectively $C_{\max}(C,D,k,n))$ denote the \emph{closure} of the set of correlations of the form $\phi(A^x_a\otimes B^y_b)$, where $A^1,\ldots,A^k$ are POVMs of length $n$ from $C$, $B^1,\ldots,B^k$ are POVMs of length $n$ from $D$, and $\phi$ is a state on $C\otimes_{\min} D$ (respectively a state on $C\otimes_{\max} D$).
\end{defn}

The following facts were established in \cite{tsirelson}:
\begin{fact}\label{tsirelsonfacts}

\

\begin{enumerate}
\item $C_{\min}(C,D,k,n)\subseteq C_{\max}(C,D,k,n)$.
    \item $C_{\min}(C,D,k,n)\subseteq C_{qa}(k,n)$.  If neither $C$ nor $D$ are subhomogeneous, then $C_{\min}(C,D,k,n)= C_{qa}(k,n)$.
    \item $C_{\max}(k,n)\subseteq C_{qc}(k,n)$.
\end{enumerate}
\end{fact}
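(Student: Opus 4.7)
The plan is to handle the three claims in order. Item (1) is essentially formal: since $\|\cdot\|_{\min}\leq \|\cdot\|_{\max}$ on $C\odot D$, the identity on elementary tensors extends to a canonical surjective $*$-homomorphism $q:C\otimes_{\max}D\twoheadrightarrow C\otimes_{\min}D$. Any state $\phi$ on $C\otimes_{\min}D$ pulls back to the state $\phi\circ q$ on $C\otimes_{\max}D$, and these produce identical numbers on the elementary tensors $A^x_a\otimes B^y_b$. Taking closures of the defining sets yields $C_{\min}(C,D,k,n)\subseteq C_{\max}(C,D,k,n)$.

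For item (3), I would apply the universal property of the maximal tensor product together with GNS. Any state $\phi$ on $C\otimes_{\max}D$ yields a unital $*$-homomorphism $\pi:C\otimes_{\max}D\to B(H)$ and a cyclic unit vector $\xi$ with $\phi(\cdot)=\langle \pi(\cdot)\xi,\xi\rangle$. Precomposing with the canonical unital inclusions $C,D\hookrightarrow C\otimes_{\max}D$ gives unital $*$-homomorphisms $\pi_C,\pi_D:C,D\to B(H)$ with pointwise commuting images. Restricting to the closed separable subspace of $H$ generated by polynomials in $\{\pi_C(A^x_a),\pi_D(B^y_b)\}$ applied to $\xi$ preserves the relevant inner products, so we may assume $H$ is separable. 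Then $\phi(A^x_a\otimes B^y_b)=\langle \pi_C(A^x_a)\pi_D(B^y_b)\xi,\xi\rangle$ lies in $C_{qc}(k,n)$, and closing the defining set finishes the inclusion.

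For the inclusion half of (2), I would fix faithful representations $C\subseteq B(H_C)$ and $D\subseteq B(H_D)$, whence $C\otimes_{\min}D\subseteq B(H_C\otimes H_D)$ faithfully. Any state on $C\otimes_{\min}D$ extends via Hahn--Banach to a state on $B(H_C\otimes H_D)$, and by a standard Glimm-style argument such a state can be weak-$*$ approximated on the finite set $\{A^x_a\otimes B^y_b\}$ by convex combinations of vector states $\omega_{\eta_j}$ with $\eta_j\in H_C\otimes H_D$. Using boundedness of the POVM entries one can truncate each $\eta_j$ to a finite-dimensional subspace, and the resulting approximants lie in $C_q(k,n)$; their convex combinations lie in the closed convex set $C_{qa}(k,n)$, placing the original correlation there by closure.

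The principal obstacle is the equality $C_{\min}(C,D,k,n)=C_{qa}(k,n)$ when neither $C$ nor $D$ is subhomogeneous; here realizing \emph{every} quantum correlation concretely via states on $C\otimes_{\min}D$ is the delicate step. Since $C_{qa}$ is the closure of $C_q$, it suffices to realize arbitrary $p\in C_q(k,n)$, which comes from finite-dimensional POVMs on some $\mathbb{C}^N$ paired with a unit vector $\xi\in\mathbb{C}^N\otimes\mathbb{C}^N$. The plan is to exploit that non-subhomogeneity of $C$ furnishes an irreducible representation $\sigma_C:C\to B(K_C)$ with $\dim K_C\geq N$; by Glimm's lemma (or Voiculescu's theorem applied to the finite-dimensional operator system spanned by the POVM entries) one produces positive lifts $\rho_C:M_N(\mathbb{C})\to C$ so that $\sigma_C\circ\rho_C$ approximately agrees with compression onto an $N$-dimensional subspace of $K_C$, and a mild rescaling coupled with a slack element restores the POVM relation up to arbitrarily small error. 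Performing the analogous construction for $D$ and pairing with the vector state on $\sigma_C(C)\otimes_{\min}\sigma_D(D)\subseteq B(K_C\otimes K_D)$ determined by $\xi$ produces a state on $C\otimes_{\min}D$ whose associated correlation approximates $p$ to any prescribed precision; closure of the defining set then yields the desired equality.
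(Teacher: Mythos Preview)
The paper does not supply its own proof of this statement; it is recorded as a fact imported from \cite{tsirelson}.  Consequently there is no in-paper argument to compare against, and your outline is essentially the standard route one would take (and the one taken in that reference).

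Your treatments of (1) and (3) are correct and complete.  For the inclusion in (2), the Glimm-type approximation by convex combinations of vector states on $B(H_C\otimes H_D)$ is valid (the weak-$*$ closed convex hull of the vector states is the full state space when the Hilbert space is infinite-dimensional), and the truncation step is clean because compressing a POVM to a finite-dimensional subspace yields a genuine POVM on that subspace, so no perturbation is needed there.  For the equality in (2), your appeal to non-subhomogeneity via Glimm's lemma to produce approximate ucp embeddings $M_N\hookrightarrow C$ is exactly the right mechanism; it is the content of \cite[Lemma~4.16]{omitting}, which the paper itself invokes elsewhere, and the final perturbation of the resulting almost-POVMs to honest POVMs is the stability fact recalled at the end of Subsection~4.1.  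One small sharpening: rather than passing through an irreducible representation and Voiculescu's theorem, it is cleaner to take the ucp maps $\psi_N:M_N\to C$ and $\rho_N:C\to M_N$ with $\rho_N\circ\psi_N\approx\id_{M_N}$ directly from that lemma, push the matrix POVMs forward through $\psi_N$ (which preserves the POVM relation exactly, since $\psi_N$ is ucp), and pull the vector state on $M_N\otimes M_N$ back through the ucp map $\rho_N\otimes\rho_N:C\otimes_{\min}D\to M_N\otimes M_N$.
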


\begin{defn}
We say that $(C,D)$ is a \textbf{Tsirelson pair} if $C_{\min}(C,D,k,n)=C_{\max}(C,D,k,n)$ for all $(k,n)$.
\end{defn}

The following was also established in \cite{tsirelson}:

\begin{fact}
If either $C$ or $D$ has the QWEP property, then $(C,D)$ is a Tsirelson pair.
\end{fact}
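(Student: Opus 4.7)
The plan is to prove, with WLOG $C$ having the QWEP, that $C_{\max}(C,D,k,n) \subseteq \cqa(k,n)$, and then appeal to Fact \ref{tsirelsonfacts}(2) to identify $\cqa(k,n)$ with $C_{\min}(C,D,k,n)$ whenever neither $C$ nor $D$ is subhomogeneous. The reverse inclusion $C_{\min}(C,D,k,n) \subseteq C_{\max}(C,D,k,n)$ is Fact \ref{tsirelsonfacts}(1), so this will settle the theorem in the generic case.

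Write $C = C'/J$ with $C'$ having the WEP. Since $C_{\min}(C,D,k,n)$ is closed, I may suppose $p(a,b|x,y) = \phi(A^x_a \otimes B^y_b)$ for some state $\phi$ on $C \otimes_{\max} D$ and POVMs $A^x$ in $C$, $B^y$ in $D$. I would first lift each POVM $A^x$---which corresponds to a ucp map $\ell^\infty_n \to C$---to a POVM $\tilde A^x$ in $C'$ by the Choi--Effros lifting theorem, applicable because $\ell^\infty_n$ is finite-dimensional and hence nuclear. Precomposing $\phi$ with $q \otimes \id_D$, where $q : C' \to C$ is the quotient map, yields a state $\tilde\phi$ on $C' \otimes_{\max} D$ satisfying $\tilde\phi(\tilde A^x_a \otimes B^y_b) = p(a,b|x,y)$.

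The key move introduces the universal \cstar-algebra $\mathcal{P} = \mathcal{P}(k,n)$ generated by $k$ universal POVMs $(\alpha^y_b)_b$ of length $n$, which has the lifting property. The POVMs $B^y$ determine a $*$-homomorphism $\pi_D : \mathcal{P} \to D$ via $\alpha^y_b \mapsto B^y_b$, and precomposing $\tilde\phi$ with $\id_{C'} \otimes \pi_D$ gives a state $\psi$ on $C' \otimes_{\max} \mathcal{P}$ satisfying $\psi(\tilde A^x_a \otimes \alpha^y_b) = p(a,b|x,y)$. By Kirchberg's WEP/LP duality, $C' \otimes_{\max} \mathcal{P} = C' \otimes_{\min} \mathcal{P}$ as \cstar-algebras, so $\psi$ is in fact a state on the \emph{minimal} tensor product. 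Together with the POVMs $(\tilde A^x_a)_a$ in $C'$ and $(\alpha^y_b)_b$ in $\mathcal{P}$, this exhibits $p$ as an element of $C_{\min}(C',\mathcal{P},k,n)$, which is contained in $\cqa(k,n)$ by Fact \ref{tsirelsonfacts}(2).

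If both $C$ and $D$ are non-subhomogeneous, then the equality clause of Fact \ref{tsirelsonfacts}(2) applied to $(C,D)$ gives $\cqa(k,n) = C_{\min}(C,D,k,n)$, completing the argument. If instead either $C$ or $D$ is subhomogeneous---hence type I and nuclear---the tensor norms satisfy $\|\cdot\|_{\max} = \|\cdot\|_{\min}$ on $C \odot D$, so the two correlation sets trivially coincide. The main obstacle I foresee is verifying that $\mathcal{P}(k,n)$ has the lifting property; while this should follow from stability of LP under unital full free products of finite-dimensional algebras (a circle of ideas going back to Boca), it warrants its own argument. Should this prove awkward, a safe fallback is to reduce to separable $D$ (by restricting to the \cstar-subalgebra of $D$ generated by the $B^y_b$, which embeds isometrically in min-tensor products) and replace $\mathcal{P}$ by $C^*(\mathbb{F}_\infty)$, whose LP is classical; the POVMs $B^y$ then lift along any chosen surjection $C^*(\mathbb{F}_\infty) \twoheadrightarrow D$ via Choi--Effros.
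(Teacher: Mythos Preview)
The paper does not supply a proof; the statement is quoted as a fact from \cite{tsirelson}. Your argument is correct and follows the standard route: lift through a WEP cover of $C$, pass to an LP algebra on the $D$-side, and invoke Kirchberg's theorem that $\otimes_{\max}=\otimes_{\min}$ when one factor has WEP and the other has LP. Two minor remarks. First, the phrase ``since $C_{\min}(C,D,k,n)$ is closed'' is slightly misplaced---at that step the closed target you actually need is $C_{qa}(k,n)$ (or, for the overall conclusion, $C_{\min}$); since both are closed by definition this is harmless. Second, your caution about the LP for $\mathcal P(k,n)$ is warranted, and your $C^*(\mathbb F_\infty)$ fallback is entirely sound and is the path of least resistance, sparing you from having to analyze the universal POVM algebra directly.
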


If $(C,C)$ is a Tsirelson pair and $C$ is not subhomogeneous, then it follows that $C_{\min}(C,C,k,n)=C_{qa}(k,n)$.  We introduce the analogous notion here for maximal tensor products and qc-correlations:

\begin{defn}
We say that a \cstar-algebra $C$ is \textbf{qc-full} if $C_{\max}(C,C,k,n)=C_{qc}(k,n)$ for all $k,n\geq 2$.  We say that a group $G$ is qc-full if $C^*(G)$ is qc-full.
\end{defn}

We list some basic facts about qc-full \cstar-algebras:

\begin{lemma}

\

\begin{enumerate}
    \item Any surjectively universal \cstar-algebra is qc-full.  In particular, any surjectively universal group (such as $C^*(\mathbb{F}_\infty)$) is qc-full.
    \item If $C$ is qc-full and $C$ is r.w.i. in $D$, then $D$ is also qc-full.
    \item If $H$ is a subgroup of $G$ and $H$ is qc-full, then $G$ is also qc-full.  In particular, if $G$ is a group that contains $\mathbb{F}_\infty$ as a subgroup (such as $\mathbb{F}_n$ for any $n\geq 2$), then $G$ is qc-full.
\end{enumerate}
\end{lemma}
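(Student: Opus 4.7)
My plan is to verify each item separately, leveraging the definition of qc-fullness and the r.w.i.\ behavior of maximal tensor products.

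For (1), I would start with an arbitrary $p\in\cqc(k,n)$ and fix a presentation $p(a,b|x,y)=\langle A^x_a B^y_b\xi,\xi\rangle$ by commuting POVMs on a separable Hilbert space $H$. Let $C_1,C_2\subseteq B(H)$ be the separable \cstar-subalgebras generated by the $A$s and the $B$s respectively. Because $C_1$ and $C_2$ commute inside $B(H)$, the universal property of the maximal tensor product yields a $*$-homomorphism $\sigma:C_1\otimes_{\max}C_2\to B(H)$ sending $A^x_a\otimes B^y_b$ to $A^x_aB^y_b$. By surjective universality of $C$, I then obtain surjective $*$-homomorphisms $\pi_i:C\to C_i$ for $i=1,2$. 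The crux of the argument is to lift each POVM in $C_i$ to a POVM in $C$; this will follow from the projectivity of the universal unital \cstar-algebra generated by a length-$n$ POVM, which is a standard result in Loring's lifting theory. Given lifts $\tilde A^x_a,\tilde B^y_b\in C$, composing $\pi_1\otimes\pi_2$ with $\sigma$ and the vector state $\omega_\xi$ produces a state $\phi$ on $C\otimes_{\max}C$ satisfying $\phi(\tilde A^x_a\otimes\tilde B^y_b)=p(a,b|x,y)$. Combined with the reverse containment from Fact~\ref{tsirelsonfacts}(3), this gives the desired equality $C_{\max}(C,C,k,n)=\cqc(k,n)$.

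For (2), given $p\in\cqc(k,n)=C_{\max}(C,C,k,n)$, I would write $p(a,b|x,y)=\phi(A^x_a\otimes B^y_b)$ for POVMs $A^x_a,B^y_b\in C$ and a state $\phi$ on $C\otimes_{\max}C$. Applying the r.w.i.\ hypothesis in each tensor factor in turn yields an isometric inclusion $C\otimes_{\max}C\hookrightarrow D\otimes_{\max}D$, so by the Hahn--Banach theorem $\phi$ extends to a state $\tilde\phi$ on $D\otimes_{\max}D$. Since the $A^x_a$ and $B^y_b$ remain POVMs when regarded as elements of $D$, the equality $\tilde\phi(A^x_a\otimes B^y_b)=p(a,b|x,y)$ witnesses $p\in C_{\max}(D,D,k,n)$, and another application of Fact~\ref{tsirelsonfacts}(3) closes the loop.

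For (3), I would invoke the standard fact that $C^*(H)\hookrightarrow C^*(G)$ is r.w.i.\ whenever $H\leq G$; this follows from the canonical completely positive conditional expectation $E:C^*(G)\to C^*(H)$ extending the linear map $\mathbb{C}[G]\to\mathbb{C}[H]$ that sends $u_g$ to $u_g$ for $g\in H$ and to $0$ otherwise. Part (2) applied with $C=C^*(H)$ and $D=C^*(G)$ then delivers qc-fullness of $G$. The ``in particular'' statement is immediate from part (1) applied to $C^*(\mathbb{F}_\infty)$ and then part (3) applied to the inclusion $\mathbb{F}_\infty\leq\mathbb{F}_n$.

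The main obstacle will be the POVM-lifting step in part (1); once that is granted, the remaining arguments amount to straightforward diagram chases combined with Hahn--Banach and the universal property of the maximal tensor product.
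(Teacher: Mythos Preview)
Your proposal is correct and matches the paper's approach. For (2) and (3) your argument coincides with the paper's (the isometric inclusion $C\otimes_{\max}C\hookrightarrow D\otimes_{\max}D$ under r.w.i., and the fact that $C^*(H)\subseteq C^*(G)$ is r.w.i.\ for $H\leq G$); for (1) the paper simply invokes \cite[Lemma 2.4]{tsirelson}, and your direct argument---lift the POVMs through the surjections $\pi_i$ and pull the vector state back along $\sigma\circ(\pi_1\otimes\pi_2)$---is precisely how one unpacks that citation.
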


\begin{proof}
Item (1) is an immediate consequence of \cite[Lemma 2.4]{tsirelson}.  Item (2) follows from the fact that $C\otimes_{\max} C\subseteq D\otimes_{\max}D$ when $C$ is r.w.i. in $D$.  Item (3) is a specific instance of item (2) as $C^*(H)$ is r.w.i. in $C^*(G)$ when $H$ is a subgroup of $G$ (see \cite[Section 3]{OzQ}).
\end{proof}

We note also that if $(C,C)$ is a Tsirelson pair (such as when $C$ has the QWEP property) and $C$ is not subhomogeneous, then $C$ is not qc-full (for otherwise $C_{qa}(k,n)=C_{qc}(k,n)$ for all $k,n\geq 2$, contradicting MIP$^*$=RE).

\begin{question}\label{qcfullquestion}
Is the class of qc-full \cstar-algebras elementary?
\end{question}

In regards to the previous question, we make the following observations:

\begin{prop}

\

\begin{enumerate}
    \item The class of qc-full \cstar-algebras is closed under ultraproducts by regular ultrafilters.
    \item If $C$ is a elementary subalgebra of $D$ for which $D$ is separable and qc-full and $C$ has property max-$\u$ for some nonprincipal ultrafilter $\u$, then $C$ is also qc-full.
\end{enumerate}
\end{prop}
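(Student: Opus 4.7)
For part (1), I would use a diagonal ultraproduct construction. Fix qc-full $(C_i)_{i \in I}$ and a regular ultrafilter $\u$ on $I$, and let $C := \prod_\u C_i$. The inclusion $C_{\max}(C, C, k, n) \subseteq C_{qc}(k, n)$ is automatic from Fact \ref{tsirelsonfacts}(3), so I focus on the reverse. Given $p \in C_{qc}(k,n)$, qc-fullness of each $C_i$ yields, for every $\ell \in \mathbb{N}$, POVMs $A_a^{x,i,\ell}, B_b^{y,i,\ell}$ in $C_i$ and a state $\phi_{i,\ell}$ on $C_i \otimes_{\max} C_i$ whose associated correlation approximates $p$ to within $1/\ell$. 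Using regularity of $\u$ (pick a decreasing sequence $X_\ell \in \u$ with empty intersection and set $\ell(i) := \max\{\ell \ : \ i \in X_\ell\}$), I arrange that $\ell(i) \to \infty$ along $\u$. Then $A_a^x := (A_a^{x,i,\ell(i)})_\u$ and $B_b^y := (B_b^{y,i,\ell(i)})_\u$ are POVMs in $C$, and pulling back the functional $(z_i)_\u \mapsto \lim_\u \phi_{i,\ell(i)}(z_i)$ along the canonical $*$-homomorphism $C \otimes_{\max} C \to \prod_\u (C_i \otimes_{\max} C_i)$ yields a state $\psi$ on $C \otimes_{\max} C$ with $\psi(A_a^x \otimes B_b^y) = p(a,b|x,y)$, proving (1).

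For part (2), I would first apply (1) to the constant family $C_i = D$ (noting that nonprincipal ultrafilters on $\mathbb{N}$ are automatically regular) to conclude that $D^\u$ is qc-full. Since $C \prec D$ we have $C \equiv D$, and both $C^\u$ and $D^\u$ are $\aleph_1$-saturated models of $\Th(C)$ of density character at most $2^{\aleph_0}$ (using separability of $D$ and hence of $C$). I would then invoke the uniqueness of saturated models of a fixed cardinality—under CH, or without CH by first passing to a further joint ultrapower with sufficient saturation—to obtain an isomorphism $C^\u \cong D^\u$, and thus $C^\u$ is qc-full. This transfer is the main obstacle: the elementarity $C^\u \prec D^\u$ alone does not visibly propagate qc-fullness, a phenomenon tied to the open Question \ref{qcfullquestion}.

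With $C^\u$ qc-full in hand, the final step is to descend to $C$ using property $\max$-$\u$. For each $\ell$, pick POVMs $\vec A^\ell, \vec B^\ell$ in $C^\u$ and a state $\psi^\ell$ on $C^\u \otimes_{\max} C^\u$ whose correlation is within $1/\ell$ of $p$. Property $\max$-$\u$ gives an isometric embedding $C^\u \otimes_{\max} C^\u \hookrightarrow (C \otimes_{\max} C)^\u$, so Hahn--Banach extends $\psi^\ell$ to a state $\tilde\psi^\ell$ on $(C \otimes_{\max} C)^\u$; such a state can be written as a weak-$*$ ultralimit of states $\psi^\ell_i$ on $C \otimes_{\max} C$. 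Representing the POVMs as $(A_a^{x,\ell,i})_\u, (B_b^{y,\ell,i})_\u$ with each fiber an (honest) POVM in $C$ after standard perturbation, I compute that the correlations $(a,b|x,y) \mapsto \psi^\ell_i(A_a^{x,\ell,i} \otimes B_b^{y,\ell,i})$ lie in $C_{\max}(C, C, k, n)$ and cluster along $\u$ within $1/\ell$ of $p$. Letting $\ell \to \infty$ and invoking closedness of $C_{\max}(C, C, k, n)$ gives $p \in C_{\max}(C, C, k, n)$, completing the proof.
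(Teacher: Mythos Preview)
Your argument for part (1) is correct and matches the paper's proof essentially verbatim: both use regularity to diagonalize approximating POVMs and states across the factors, and both pull the ultralimit state back along the canonical map $C\otimes_{\max}C\to\prod_\u(C_i\otimes_{\max}C_i)$.

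For part (2), however, there is a genuine gap. Your route hinges on producing an isomorphism $C^\u\cong D^\u$. Under CH this is fine (both are $\aleph_1$-saturated of density $\aleph_1$), but the proposition is stated without CH, and your proposed patch---``passing to a further joint ultrapower with sufficient saturation''---does not repair the argument. A further ultrapower yields at best $(C^\u)^{\mathcal V}\cong (D^\u)^{\mathcal V}$ or $C^{\mathcal V}\cong D^{\mathcal V}$ for some \emph{new} ultrafilter $\mathcal V$; to descend from there to $C$ you would need property $\max$-$\mathcal V$ (or $\max$-$(\u\times\mathcal V)$), and the hypothesis only gives you $\max$-$\u$. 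So the detour through $D^\u$ and Keisler--Shelah loses exactly the datum you need.

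The paper sidesteps this entirely by never invoking $D^\u$ or any isomorphism of ultrapowers. Since $C\prec D$ and $D$ is separable, countable saturation of $C^\u$ (which follows from $\u$ being nonprincipal, no CH needed) already furnishes an elementary embedding $i:D\hookrightarrow C^\u$ extending the diagonal on $C$. One then uses qc-fullness of $D$ \emph{directly}: pick POVMs and a state in $D\otimes_{\max}D$, push them through the chain
\[
D\otimes_{\max}D\;\hookrightarrow\;C^\u\otimes_{\max}C^\u\;\hookrightarrow\;(C\otimes_{\max}C)^\u
\]
(the first arrow isometric since elementary embeddings are r.w.i., the second by property $\max$-$\u$), extend the state, and disintegrate it over the ultrapower---exactly your descent step, but launched from $D$ rather than from $C^\u$. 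This avoids the need to establish qc-fullness of $C^\u$ altogether.
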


\begin{proof}
To prove (1), suppose that $(C_i)_{i\in I}$ is a family of qc-full \cstar-algebras and $\u$ is a regular ultrafilter on $I$.  Set $C:=\prod_\u C_i$.  Fix a correlation $p\in C_{qc}(k,n)$.  Since $\u$ is regular and each $C_i$ is qc-full, we may find POVMs $(A^x_a)_i,(B^y_b)_i\in C_i$ and states $\phi_i$ on $C_i\otimes_{\max}C_i$ such that $$\lim_\u \phi_i((A^x_a)_i\otimes (B^y_b)_i)=p(a,b|x,y).$$  Set $A^x_a:=((A^x_a)_i)_\u$ and $B^y_b:=((B^y_b)_i)_\u$, which are POVMs in $C$.  Moreover, let $\phi$ be the state on $C\otimes_{\max}C$ induced by the $*$-homomorphism $$C\otimes_{\max}C\to \prod_\u (C_i\otimes_{\max}C_i),$$ where the latter is equipped with the ultraproduct state $\lim_\u \phi_i$.  It follows that $p(a,b|x,y)=\phi(A^x_a\otimes B^y_b)$, whence $p\in C_{\max}(C,C,k,n)$.

To prove (2), once again fix $p\in C_{qc}(k,n)$.  Fix an elementary embedding $i:D\hookrightarrow C^\u$ that restricts to the diagonal embedding on $C$.  Consider the maps 
$$C\otimes_{\max}C\subseteq D\otimes_{\max}D\hookrightarrow (C^\u\otimes_{\max}C^\u)\subseteq (C\otimes_{\max}C)^\u.$$
Here, the first inclusion follows from the fact that $C$ is r.w.i. in $D$ (since $C$ is an elementary subalgebra of $D$) and similarly for the embedding $i\otimes i$ in the middle; the final inclusion follows from the assumption that $C$ has property max-$\u$.  Since $D$ is qc-full, given $\epsilon>0$, there are POVMS $A^x_a$ and $B^y_b$ in $D$ and a state $\phi$ on $D\otimes_{\max}D$ such that $|p(a,b|x,y)-\phi(A^x_a\otimes B^y_b)|<\epsilon$.  Via these embeddings, we may identify $A^x_a$ with $((A_x^a)_i)_u \in C^\u$ and similarly $B^y_b$ with $((B^y_b)_i)_\u\in C^\u$.  Moreover, we may extend $\phi$ to a state, which we also call $\phi$, on all of $(C\otimes_{\max}C)^\u$.  By \cite[Lemma 2.3]{tsirelson}, we may write the restriction of $\phi$ to the image of $D\otimes_{\max}D$ as $(\phi_i)_\u$, where each $\phi_i$ is a state on $C\otimes_{\max}C$.  It follows that, for $\u$-almost all $i\in I$, we have $|p(a,b|x,y)-\phi_i((A^x_a)_i\otimes (B^y_b)_i)|<\epsilon$ and we thus conclude that $C$ is qc-full.
\end{proof}

Note that the proof of part (2) of the previous proposition shows that the conclusion holds true when $D$ is not necessarily separable as long as $\u$ is a ``good enough'' ultrafilter so that $C^\u$ is sufficiently saturated so as to elementarily embed $D$.  The restriction to regular ultrafilters in (1) is not severe as it pertains to checking that the class of qc-full \cstar-algebras is axiomatizable.  In order to give a positive answer to Question \ref{qcfullquestion}, one would have to remove the assumption of property max-$\u$ in item (2).

Returning to the main thread, one can analogously define the values of a nonlocal game obtained by using strategies from the sets $C_{\min}(C,D,k,n)$ and $C_{\max}(C,D,k,n)$; we call these values $\val^{C\otimes D}(\mathfrak{G})$ and $\val^{C\otimes_{\max} D}(\mathfrak{G})$.  However, since $$\sum_{x,y\in [k]}\pi(x,y)\sum_{a,b\in[n]}D(x,y,a,b)A^x_a\otimes B^y_b$$ is a positive element in either $C\otimes D$ or $C\otimes_{\max}D$ and since the norm of a positive element in any \cstar-algebra is given by the supremum of all states applied to that element, we immediately see the following:

\begin{lemma}\label{gamevalues}
For any nonlocal game $\mathfrak{G}$ and any pair of \cstar-algebras $C$ and $D$, we have

$$\val^{C\otimes D}(\mathfrak{G})=\sup\{P^{\min}_{C,k^2n^2}(\pi(x,y)D(x,y,a,b)A^x_a,B^y_b) \ : \ A^x,B^y \text{ POVMs in }C\}$$
and 
$$\val^{C\otimes_{\max} D}(\mathfrak{G})=\sup\{P^{\max}_{C,k^2n^2}(\pi(x,y)D(x,y,a,b)A^x_a,B^y_b) \ : \ A^x,B^y \text{ POVMs in }C\}.$$
In particular, if $(C,C)$ is a Tsirelson pair and $C$ is not subhomogeneous (resp. if $C$ is qc-full), then $\val^{C\otimes C}(\mathfrak{G})=\val^*(\mathfrak{G})$ (resp. $\val^{C\otimes_{\max} C}(\mathfrak{G})=\val^{co}(\mathfrak{G})$).
\end{lemma}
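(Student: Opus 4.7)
The plan is to expand both sides of the claimed equality and observe that they coincide after a short computation; the key ingredients are the positivity of the natural test element and the standard fact that the norm of a positive element of a \cstar-algebra equals the supremum of its values on states. First, I would fix POVMs $A^1,\ldots,A^k$ of length $n$ in $C$ and $B^1,\ldots,B^k$ of length $n$ in $D$ and introduce
$$T:=\sum_{x,y\in[k]}\pi(x,y)\sum_{a,b\in[n]}D(x,y,a,b)\,A^x_a\otimes B^y_b,$$
viewed as an element of either $C\otimes D$ or $C\otimes_{\max} D$. Since each $A^x_a$ and each $B^y_b$ is positive, every elementary tensor $A^x_a\otimes B^y_b$ is positive in the relevant tensor product, and since the coefficients $\pi(x,y)D(x,y,a,b)$ are non-negative reals, $T$ is itself positive.

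Next, for a correlation of the form $p(a,b|x,y)=\phi(A^x_a\otimes B^y_b)$ with $\phi$ a state on the appropriate tensor product, linearity gives $\val(\mathfrak{G},p)=\phi(T)$. Fixing the POVMs and taking the supremum over states $\phi$ thus yields $\|T\|$ by positivity of $T$. By construction this norm is exactly the value of $P^{\min}_{C,k^2n^2}$ (resp.\ $P^{\max}_{C,k^2n^2}$) at the pair of $(k^2n^2)$-tuples $\bigl(\pi(x,y)D(x,y,a,b)A^x_a\bigr)$ and $(B^y_b)$, so taking a further supremum over POVMs gives the right-hand side of the claimed identity. Because $C_{\min}(C,D,k,n)$ and $C_{\max}(C,D,k,n)$ were defined as closures and $p\mapsto \val(\mathfrak{G},p)$ is continuous, this supremum equals $\val^{C\otimes D}(\mathfrak{G})$ (resp.\ $\val^{C\otimes_{\max}D}(\mathfrak{G})$), establishing the first part.

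For the ``in particular'' clause, I would simply invoke Fact \ref{tsirelsonfacts}: when $(C,C)$ is a Tsirelson pair and $C$ is not subhomogeneous, one has $C_{\min}(C,C,k,n)=C_{qa}(k,n)$ for all $k,n$, whence $\val^{C\otimes C}(\mathfrak{G})=\val^*(\mathfrak{G})$; and when $C$ is qc-full, the equality $C_{\max}(C,C,k,n)=C_{qc}(k,n)$ holds by definition, giving $\val^{C\otimes_{\max}C}(\mathfrak{G})=\val^{co}(\mathfrak{G})$. There is no real obstacle: the only thing to be careful about is the order in which the two suprema (over POVMs and over states) are taken, which is legitimate precisely because positivity of $T$ converts the state-supremum into a norm and thereby into a value of the predicate $P^{\min}$ or $P^{\max}$.
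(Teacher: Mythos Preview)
Your proof is correct and follows essentially the same approach as the paper's own argument: the paper simply observes that the test element $\sum_{x,y}\pi(x,y)\sum_{a,b}D(x,y,a,b)A^x_a\otimes B^y_b$ is positive and invokes the fact that the norm of a positive element equals the supremum over states, which is exactly what you do (with a bit more detail about handling the closure in the definition of $C_{\min}$ and $C_{\max}$ via continuity of $p\mapsto\val(\mathfrak{G},p)$). The ``in particular'' clause is handled identically via Fact~\ref{tsirelsonfacts} and the definition of qc-full.
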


We need one last fact about POVMs in \cstar-algebras.  Let $\rho_n(x_1,\ldots,x_n)$ denote the following existential formula in the language of \cstar-algebras:
$$\inf_{y_1}\cdots\inf_{y_n}\max\left(\max_{i=1,\ldots,n}\|x_i-y_iy_i^*\|,\left\|\sum_{i=1}^m x_i-1\right\|\right).$$
Note that the zeroset of $\rho_n$ in a \cstar-algebra $C$ is the set of POVMs of length $n$ in $C$.  We call a sequence $(A_1,\ldots,A_n)$ from a \cstar-algebra $C$ a \textbf{$\delta$-almost POVM of length $n$} if $\rho_n(A_1,\ldots,A_m)^C<\delta$.  It was shown in \cite{KEPJFA} that there is a computable function $\Delta_n:(0,1)\to (0,1)$ such that, for all $\epsilon>0$, all \cstar-algebras $C$, and all $\Delta_n(\epsilon)$-almost POVMS $(A_1,\ldots,A_n)$ from $C$, there is a POVM $B_1,\ldots,B_n$ from $C$ of length $n$ such that $\|A_i-B_i\|<\epsilon$ for all $i=1,\ldots,n$.

\subsection{The minimal tensor norm}

\begin{defn}
We say that $C$ has \textbf{weakly explicit minimal tensor norm} if there is an effective map such that, upon input $n$, returns a universal restricted\footnote{A restricted formula is one such that all connectives used in the formation of the formula are among the unary connectives $0$, $1$, $x\mapsto x/2$ and the binary connective $x\dminus y$.} formula $\varphi_n(\vec x,\vec y)$ such that, for all $\vec a,\vec b\in C^n$, we have $$\|P^{\min}_{C,n}(\vec a,\vec b)-\varphi_n(\vec a,\vec b)^C\|<1/8.$$  The notion of having \textbf{weakly explicit maximal tensor norm} is defined analogously.
\end{defn}

\begin{lemma}
If $D\subseteq C$ is existential and $C$ has weakly explicit minimal or maximal tensor norm, then so does $D$.
\end{lemma}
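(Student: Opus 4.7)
The plan is to show that the very same effective map $n\mapsto \varphi_n$ witnessing weakly explicit tensor norm for $C$ also witnesses it for $D$. For any $\vec a,\vec b\in D^n$, it suffices to establish that the tensor norm values coincide, $P^\alpha_{D,n}(\vec a,\vec b)=P^\alpha_{C,n}(\vec a,\vec b)$, and that the formula values coincide, $\varphi_n(\vec a,\vec b)^D=\varphi_n(\vec a,\vec b)^C$, since then the estimate satisfied by $\varphi_n$ on $C$ transfers verbatim to $D$.

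For $\alpha=\min$, the agreement of tensor norms is automatic, since any inclusion of \cstar-subalgebras $D\subseteq C$ extends to an isometric inclusion $D\otimes D\subseteq C\otimes C$. Since $\varphi_n$ is a universal (restricted) formula and $D$ is an existential substructure of $C$---equivalently, a universal substructure---the interpretations of $\varphi_n$ agree on all tuples from $D$. This takes care of the minimal case.

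The substantive case is $\alpha=\max$, where the natural $*$-homomorphism $D\otimes_{\max}D\to C\otimes_{\max}C$ is generally not isometric. I would reduce this to showing that $D\subseteq C$ is r.w.i. To that end, I would invoke the standard ultrapower characterization of existential substructures in continuous logic: there exist an ultrafilter $\u$ and a $*$-embedding $j\colon C\hookrightarrow D^\u$ such that $j|_D$ is the diagonal embedding. Since the diagonal embedding $D\hookrightarrow D^\u$ is r.w.i.\ (as recorded earlier in the paper), for any auxiliary \cstar-algebra $E$ the composition
\[
D\otimes_{\max}E \;\longrightarrow\; C\otimes_{\max}E \;\longrightarrow\; D^\u\otimes_{\max}E
\]
is isometric, which forces the first arrow to be isometric as well. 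Hence $D\subseteq C$ is r.w.i., and in particular $P^{\max}_{D,n}=P^{\max}_{C,n}$ on $D^{2n}$. The agreement of $\varphi_n$-values on $D$ then again follows from $D\preceq_\exists C$ applied to $\varphi_n$ (with its appropriate quantifier complexity, $\min$ or $\max$).

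The main obstacle is clearly the r.w.i.\ step: once one has the factorization of the inclusion $D\subseteq C$ through the ultrapower $D^\u$, the equality of maximal tensor norms follows by a diagram chase, and the rest of the argument is formal.
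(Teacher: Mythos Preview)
Your argument is correct and follows exactly the approach the paper indicates: the paper's one-line proof simply invokes ``existential inclusions are r.w.i.'' together with the definition, and you have supplied the details of that fact (via the factorization $D\subseteq C\hookrightarrow D^\u$ and the r.w.i.\ of the diagonal) along with the verification that the universal formulae $\varphi_n$ take the same values on $D$ and $C$. One minor remark: your parenthetical ``with its appropriate quantifier complexity, $\min$ or $\max$'' is unnecessary, since by definition $\varphi_n$ is a universal restricted formula in both the minimal and maximal cases, and the agreement of its values follows from the same $D\subseteq C\hookrightarrow D^\u$ sandwich you already used.
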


\begin{proof}
This follows from the definition and the fact that existential inclusions are r.w.i.
\end{proof}

\begin{thm}\label{minexplicit}
Suppose that $C$ is a locally universal \cstar-algebra.  Then $C$ does not have weakly explicit minimal tensor norm.
\end{thm}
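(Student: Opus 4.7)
The plan is to derive a contradiction with MIP$^*$=RE (Theorem \ref{MIP*=RE}) by using the hypothesized formulas to produce an RE semi-decision procedure for ``$\val^*(\mathfrak{G})\leq 1/2$'' under the MIP$^*$ promise. The key point is that, although the value $\varphi_N^C$ in the particular algebra $C$ is not a priori effectively computable, the fact that $\varphi_N$ is \emph{universal} combined with the local universality of $C$ forces a suitable sentence built from $\varphi_N$ to equal the supremum over \emph{all} \cstar-algebras $A$ of the corresponding values in $A$; this semantic identity then translates to RE-checkability via effective completeness for the recursively axiomatizable theory of \cstar-algebras.

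Suppose $C$ is locally universal and admits an effective family $(\varphi_n)_{n\geq 1}$ as in the definition. Given a nonlocal game $\mathfrak{G}=(\pi,D)$ with $k$ questions and $n$ answers, set $N:=k^2n^2$. Using the existential formulas $\rho_n$ (whose zero-sets are POVMs of length $n$), together with a sufficiently large penalty constant, I would effectively construct a universal restricted sentence $\Psi_{\mathfrak{G}}$ such that, for any \cstar-algebra $A$, $\Psi_{\mathfrak{G}}^A$ is within any prescribed error of $\sup\{\varphi_N^A(\pi(x,y)D(x,y,a,b)A^x_a,B^y_b):A^x,B^y\text{ POVMs in }A\}$. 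Since $C$ is locally universal it is not subhomogeneous, so by Fact \ref{tsirelsonfacts}(2) and Lemma \ref{gamevalues}, $\val^{C\otimes C}(\mathfrak{G})=\val^*(\mathfrak{G})$; the hypothesis $|\varphi_N^C-P^{\min}_{C,N}|<1/8$ then yields $|\Psi_{\mathfrak{G}}^C-\val^*(\mathfrak{G})|\leq 1/8$ by passing to suprema over POVMs in $C$.

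The crucial observation is that universal sentences are monotone under inclusion of \cstar-algebras; combined with the fact that $C\preceq C^{\u}$ (so $\Psi_{\mathfrak{G}}^C=\Psi_{\mathfrak{G}}^{C^{\u}}$) and that every separable \cstar-algebra embeds into $C^{\u}$, one concludes that $\Psi_{\mathfrak{G}}^C = \sup_A \Psi_{\mathfrak{G}}^A$ where $A$ ranges over all \cstar-algebras. Therefore, for any fixed rational threshold $r$, the condition ``$\Psi_{\mathfrak{G}}^C\leq r$'' is equivalent to the semantic consequence ``$\Psi_{\mathfrak{G}}^A\leq r$ in every \cstar-algebra $A$'', which by effective completeness of continuous first-order logic and the recursive axiomatizability of the theory of \cstar-algebras is recursively enumerable in $\mathfrak{G}$.

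To conclude, choose $r:=3/4$. If $\val^*(\mathfrak{G})\leq 1/2$ then $\Psi_{\mathfrak{G}}^C\leq 5/8<3/4$, so the RE check for ``$\Psi_{\mathfrak{G}}\leq 3/4$'' halts; if $\val^*(\mathfrak{G})=1$ then $\Psi_{\mathfrak{G}}^C\geq 7/8>3/4$, so it does not halt. Running this semi-decision procedure in parallel with the standard RE semi-decider for ``$\val^*(\mathfrak{G})=1$'' (enumeration of finite-dimensional strategies) yields a decision algorithm for the MIP$^*$ promise problem, contradicting Theorem \ref{MIP*=RE}. The main technical obstacle, I expect, lies in the careful effective construction of $\Psi_{\mathfrak{G}}$ as a universal restricted sentence (to which the $\rho_n,\Delta_n$ apparatus recalled in the preliminaries is tailored) together with the proper invocation of effective completeness in continuous logic to conclude RE-checkability; neither step is conceptually deep, but both require care to carry out rigorously.
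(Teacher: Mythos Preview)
Your proposal is correct and follows essentially the same route as the paper's proof: build a universal restricted sentence approximating $\val^*(\mathfrak{G})$ via Lemma~\ref{gamevalues} and the definability of POVMs, observe that local universality of $C$ forces its value in $C$ to coincide with the supremum over all \cstar-algebras (equivalently, $\sigma^C=\inf\{q:T\vdash \sigma\dotminus q\}$), and then combine the resulting RE upper-bound procedure with the brute-force lower-bound search to contradict $\mip^*=\mathrm{RE}$. The only cosmetic difference is that the paper writes out the universal sentence more concretely using the distance to the definable set $\mathcal{X}$ of POVM tuples and the computable modulus $\alpha$ coming from the $\Delta_n$ apparatus, whereas you leave this as the ``main technical obstacle''; but your outline of that step is accurate.
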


\begin{proof}
Suppose, towards a contradiction, that $C$ has weakly explicit minimal tensor norm as witnessed by the formulae $\varphi_n$.  We claim then that we can decide if a given nonlocal game $\frak G=(\pi,D)$ (with $k$ questions and $n$ answers) is such that $\val^*(\frak G)\leq 1/2$ or $\val^*(\frak G)=1$, promised that one of them is the case, contradicting MIP*=RE.  Indeed, as stated above, one first runs the brute force algorithm for computing lower bounds for $\val^*(\frak G)$.  On the other hand, since $C$ is not subhomogeneous (as it is locally universal), by Lemma \ref{gamevalues} above, we have that 
$$\val^*(\frak G)=\sup\{P^{\min}_{C,k^2n^2}(\pi(x,y)D(x,y,a,b)A^x_a,B^y_b) \ : \ A^x,B^y \text{ POVMs in }C\}.$$
Letting $\mathcal{X}$ denote the set of $k^2n^2$-tuples that form tuples of POVMs of the appropriate length, we can rewrite the previous equality as
$$\val^*(\frak G)=\sup_{(A,B)}\left(P^{\min}_{C,k^2n^2}(\pi(x,y)D(x,y,a,b)A^x_a,B^y_b)\dminus d((A,B),\mathcal{X})\right).$$
Letting $\rho(X,Y)$ denote the obvious formula defining the set $\mathcal{X}$ (which uses the formulae $\rho_n$ introduced in the previous section), there is a computable, continuous, non-decreasing function $\alpha:[0,1)\to [0,1)$ such that $\alpha(0)=0$ and  
$$d((A,B),\mathcal{X})=\inf_{(A',B')}\left(d((A,B),(A',B'))+\alpha(\rho(A',B'))\right).$$  Plugging this expression for $d((A,B),\mathcal{X})$ into the above formula for $\val^*(\frak G)$ and approximating $P^{\min}_{C,k^2n^2}$ with $\varphi_{k^2n^2}$, we obtain a universal sentence $\sigma_{k^2n^2}$ such that $|\val^*(\frak G)-\sigma_{k^2n^2}^C|<1/8$.  Since $C$ is locally universal, we have, for any universal sentence $\sigma$, that $\sigma^C=\inf\{q\in \mathbb{Q}^{>0} \ : \ T\vdash \sigma\dminus q\}$, where $T$ is the theory of \cstar-algebras.  Now start running proofs from $T$ and record whenever $T\vdash \sigma\dotminus q$, for then we know that $\sigma^C_{k^2n^2}\leq q$; if we ever see this for some $q\leq 3/4$, we declare $\val^*(\frak G)\leq 1/2$.  

To see that the conjunction of these two algorithms works, note that if $\val^*(\frak G)=1$, then the brute force algorithm will eventually confirm the fact that $\val^*(\frak G)>1/2$, whence we can conclude that $\val^*(\frak G)=1$.  If, on the other hand, $\val^*(\frak G)\leq 1/2$, then $\sigma^C_{k^2n^2}\leq 5/8$ and  that fact will eventually be recorded.  To see that the second algorithm makes no mistakes, if we see that $\sigma^C_{k^2n^2}\leq 3/4$, then we know that $\val^*(\frak G)\leq 7/8$, and thus $\val^*(\frak G)\leq 1/2$.
\end{proof}

The same reasoning shows the following more general result:

\begin{thm}
Suppose that $T$ is an effectively axiomatizable, non-subhomogeneous theory.  If $C$ is a locally universal model of $T$, then $C$ does not have weakly explicit minimal tensor norm.
\end{thm}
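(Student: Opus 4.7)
The plan is to repeat the proof of Theorem \ref{minexplicit} nearly verbatim, since that argument invoked ``$T$ = theory of \cstar-algebras'' in only two essential places: (i) to know that $C$ is not subhomogeneous, and (ii) to produce, by enumerating proofs, a computable sequence of rational upper bounds on $\sigma^C$ for each universal sentence $\sigma$. Both uses generalize immediately to the hypotheses at hand.

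First I would verify that $C$ itself is not subhomogeneous. Since $T$ is not subhomogeneous as a theory, fix a model $M\models T$ that is not subhomogeneous. By local universality of $C$ in the class of models of $T$, the algebra $M$ embeds into some ultrapower $C^\u$. If $C$ were $n$-subhomogeneous for some $n$, then $C^\u$ and hence its subalgebra $M$ would also be $n$-subhomogeneous, a contradiction. With $C$ non-subhomogeneous, Fact \ref{tsirelsonfacts}(2) together with Lemma \ref{gamevalues} yields, for every nonlocal game $\frak G$ with $k$ questions and $n$ answers,
$$\val^*(\frak G)=\sup\{P^{\min}_{C,k^2n^2}(\pi(x,y)D(x,y,a,b)A^x_a,B^y_b) \ : \ A^x, B^y \text{ POVMs in }C\}.$$

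Now assume, toward a contradiction, that $C$ has weakly explicit minimal tensor norm, witnessed by formulas $\varphi_n$. Exactly as in the proof of Theorem \ref{minexplicit}, the ``distance to the set of POVMs'' trick converts the constrained supremum above into an unconstrained one over the operator norm ball, and the approximation $|P^{\min}_{C,k^2n^2}-\varphi_{k^2n^2}^C|<1/8$ lets us effectively construct from $\frak G$ a universal sentence $\sigma_{k^2n^2}$ such that $|\val^*(\frak G)-\sigma_{k^2n^2}^C|<1/8$. This construction is a formal manipulation and is insensitive to the choice of $T$.

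The crux is the step that leverages local universality. For any universal sentence $\sigma$, downward transfer of universal sentences to subalgebras together with the fact that every model of $T$ embeds into $C^\u$ gives $\sigma^C=\sup\{\sigma^B : B\models T\}$, and hence, by the completeness theorem for continuous logic,
$$\sigma^C=\inf\{q\in\mathbb{Q}^{>0} \ : \ T\vdash \sigma\dminus q\}.$$
Because $T$ is effectively axiomatizable, proofs from $T$ can be enumerated, producing a computable decreasing sequence of rational upper bounds converging to $\sigma_{k^2n^2}^C$. Running this enumeration in parallel with the brute-force enumeration of lower bounds for $\val^*(\frak G)$ decides the promise problem of Theorem \ref{MIP*=RE} by the same thresholding argument used in Theorem \ref{minexplicit} (if some $q\leq 3/4$ is witnessed, declare $\val^*(\frak G)\leq 1/2$; if the brute-force algorithm ever reports a value $>1/2$, declare $\val^*(\frak G)=1$), contradicting MIP$^*$=RE. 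I do not anticipate any serious obstacle: the only point deserving explicit justification is the displayed identity $\sigma^C=\inf\{q : T\vdash \sigma\dminus q\}$, which follows cleanly from local universality plus downward transfer of universal sentences.
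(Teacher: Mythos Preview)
Your proposal is correct and matches the paper's approach exactly: the paper does not give a separate proof but simply states that ``the same reasoning shows'' this more general result, and you have correctly identified and justified the only two places in the proof of Theorem \ref{minexplicit} that require adaptation (non-subhomogeneity of $C$, and the use of effective axiomatizability of $T$ to enumerate proofs witnessing upper bounds on $\sigma^C$).
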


\begin{question}\label{Cuntzquestion}
Does the Cuntz algebra $\mathcal{O}_2$ have weakly explicit (minimal) tensor norm?   
\end{question}

The \textbf{Kirchberg embedding problem (KEP)} asks whether or not $\mathcal{O}_2$ is locally universal; see \cite{KEPJFA} or \cite{KEP} for more on this problem.  If Question \ref{Cuntzquestion} has a positive answer, then by Theorem \ref{minexplicit}, we see that the KEP has a negative solution.

\subsection{The maximal tensor norm}

Theorem \ref{minexplicit} allows us to make some conclusions about algebras that do not have weakly explicit maximal tensor norm.

\begin{thm}
Suppose that $T$ is an effectively axiomatizable non-subhomogeneous theory.  If $C$ is a locally universal model of $T$ for which $(C,C)$ is a Tsirelson pair (such as when $C$ has QWEP), then $C$ does not have weakly explicit maximal tensor norm.  
\end{thm}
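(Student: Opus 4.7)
My plan is to adapt the proof of Theorem \ref{minexplicit}, using the Tsirelson-pair hypothesis to bridge from the maximal tensor norm on $C$ to the entangled value of nonlocal games. Since a locally universal \cstar-algebra is never subhomogeneous, the hypothesis that $(C,C)$ is a Tsirelson pair combined with Fact \ref{tsirelsonfacts}(2) gives $C_{\max}(C,C,k,n)=C_{\min}(C,C,k,n)=C_{qa}(k,n)$ for all $k,n\geq 2$, so that by Lemma \ref{gamevalues} one has $\val^{C\otimes_{\max}C}(\mathfrak{G})=\val^*(\mathfrak{G})$ for every nonlocal game $\mathfrak{G}$. This identification is the only genuinely new ingredient beyond the minimal case.

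Granting it, I would argue by contradiction. Suppose $C$ has weakly explicit maximal tensor norm, witnessed by universal restricted formulas $\varphi_n$. Following the recipe in the proof of Theorem \ref{minexplicit} verbatim but with $P^{\max}_{C,k^2n^2}$ in place of $P^{\min}_{C,k^2n^2}$, I would rewrite
$$\val^*(\mathfrak{G})=\sup_{(A,B)}\left(P^{\max}_{C,k^2n^2}(\pi(x,y)D(x,y,a,b)A^x_a,B^y_b)\dminus d((A,B),\mathcal{X})\right),$$
where $\mathcal{X}$ is the zeroset cut out by the $\rho_n$'s and the distance to $\mathcal{X}$ is expressed via the same computable modulus $\alpha$. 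Substituting $\varphi_{k^2n^2}$ for $P^{\max}_{C,k^2n^2}$ produces, effectively in $\mathfrak{G}$, a universal sentence $\sigma_\mathfrak{G}$ with $|\val^*(\mathfrak{G})-\sigma_\mathfrak{G}^C|<1/8$.

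Since $C$ is a locally universal model of the effectively axiomatizable theory $T$ and $\sigma_\mathfrak{G}$ is universal, $\sigma_\mathfrak{G}^C=\inf\{q\in\mathbb{Q}^{>0} : T\vdash \sigma_\mathfrak{G}\dminus q\}$, which is computably enumerable by running proofs from $T$. Combining this enumeration of upper bounds on $\val^*(\mathfrak{G})$ with the brute force enumeration of lower bounds will distinguish the MIP$^*$ promise cases $\val^*(\mathfrak{G})\leq 1/2$ and $\val^*(\mathfrak{G})=1$, contradicting Theorem \ref{MIP*=RE}. I do not expect any real obstacle: once the Tsirelson-pair identification of $\val^{C\otimes_{\max}C}$ with $\val^*$ is in place, the entire argument of Theorem \ref{minexplicit} transliterates with $\max$ in place of $\min$ throughout.
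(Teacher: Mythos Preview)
Your proposal is correct and follows essentially the same route as the paper: the paper's proof simply notes that when $(C,C)$ is a Tsirelson pair (and $C$ is not subhomogeneous), one has $\val^*(\mathfrak{G})=\sup\{P^{\max}_{C,k^2n^2}(\pi(x,y)D(x,y,a,b)A^x_a,B^y_b)\}$ over POVMs in $C$, and then invokes the proof of Theorem~\ref{minexplicit} verbatim with $P^{\max}$ in place of $P^{\min}$. Your write-up spells out this reduction in slightly more detail, but the key idea and the structure of the argument are identical.
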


\begin{proof}
The only thing to note is that when $C$ is a Tsirelson pair, one has that 
$$\val^*(\frak G)=\sup\{P^{\max}_{C,k^2n^2}(\pi(x,y)D(x,y,a,b)A^x_a,B^y_b) \ : \ A^x,B^y \text{ POVMs in }C\}.$$
Then one argues as in the proof of Theorem \ref{minexplicit}.
\end{proof}

Recall that a \cstar-algebra is \textbf{existentially closed (e.c.)} if it is existential in all algebras containing it.  Since e.c. \cstar-algebras are locally universal and have the WEP \cite{KEP}, we can conclude:

\begin{cor}
    If $C$ is an e.c. \cstar-algebra, then $C$ does not have weakly explicit minimal tensor norm nor weakly explicit maximal tensor norm.
\end{cor}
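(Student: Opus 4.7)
The corollary is essentially a direct application of the two preceding theorems, so the plan is to verify that an arbitrary e.c.\ \cstar-algebra $C$ satisfies the hypotheses of both. I will rely on the facts (cited above from \cite{KEP}) that every e.c.\ \cstar-algebra is locally universal and has the WEP, hence in particular the QWEP.

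For the minimal tensor norm, since $C$ is locally universal, Theorem \ref{minexplicit} applies directly and yields that $C$ does not have weakly explicit minimal tensor norm. No further work is needed.

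For the maximal tensor norm, I would apply the preceding theorem to the theory $T$ of all \cstar-algebras. This theory is effectively axiomatizable. It is non-subhomogeneous because e.c.\ \cstar-algebras exist and, being locally universal, cannot be subhomogeneous (for instance, every matrix algebra embeds in an ultrapower of $C$, ruling out any uniform bound on irreducible representation dimensions). Since $C$ is a locally universal model of $T$ and has QWEP, the pair $(C,C)$ is a Tsirelson pair by the fact cited after Definition of Tsirelson pair. Hence the preceding theorem applies and $C$ does not have weakly explicit maximal tensor norm.

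There is no genuine obstacle here; the only thing worth spelling out is why $C$ is non-subhomogeneous, and this is immediate from local universality together with the fact that the class of subhomogeneous algebras is closed under embedding into ultrapowers (equivalently, by Fact \ref{subhomfact}, closed under elementary equivalence).
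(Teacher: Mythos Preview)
Your proposal is correct and matches the paper's approach exactly: the paper simply notes that e.c.\ \cstar-algebras are locally universal and have the WEP (hence QWEP), so the corollary follows immediately from the two preceding theorems. Your only addition is spelling out why the theory of all \cstar-algebras is non-subhomogeneous, which the paper leaves implicit.
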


\section{Computable presentations and maximal tensor products}

\subsection{Preliminaries on computable presentations of \cstar-algebras}

Let $C$ be a separable \cstar-algebra.  A \textbf{presentation} of $C$ is a pair $C^\dagger:=(C,(a_n)_{n\in \mathbb N})$, where $\{a_n \ : \ n\in \mathbb N\}$ is a subset of $C$ that generates $C$ (as a \cstar-algebra).  
Elements of the sequence $(a_n)_{n\in \mathbb N}$ are referred to as \textbf{special points} of the presentation while elements of the form $p(a_{i_1},\ldots,a_{i_k})$ for $p$ a $*$-polynomial with coefficients from $\mathbb Q(i)$ (a \textbf{rational polynomial}) are referred to as \textbf{generated points} of the presentation.  By fixing an effective bijection between the set of rational polynomials and $\mathbb N$, we can fix an effective enumeration of the rational polynomials and thus from any presentation of a \cstar-algebra we obtain an effective enumeration of the generated points of the presentation.  If $x$ is the $n^{\text{th}}$ generated point of $C^\dagger$, then we call $n$ an \textbf{code} for $x$.

We say that $C^\dagger$ is a \textbf{computable presentation} of $C$ if there is an algorithm such that, upon input a generated point $p$ of $C^\dagger$ and $k\in \mathbb N$, returns a rational number $q$ such that $|\|p\|-q|<2^{-k}$.  A weaker notion is that of a c.e. presentation, which means that there is an algorithm which, upon input a generated point $p$ of $C^\dagger$, enumerates a sequence of upper bounds which converges to $\|p\|$.  If $C^\dagger$ is a computable or c.e. presentation, then by a \textbf{code} for $C^\dagger$ we mean a natural number which codes the finite sequence of strings that describes the algorithm.  

An element $x\in C$ is a \textbf{computable point} of the presentation $C^\dagger$ if there is an algorithm which, upon input $k\in \mathbb N$, returns a generated point $q$ of $C^\dagger$ such that $\|x-q\|<2^{-k}$.  Once again, one can speak of the code of a computable point of a presentation.

If $C^\dagger$ and $D^\#$ are presentations of \cstar-algebras $C$ and $D$ respectively, a function $\varphi : C^\dagger \to D^\#$ is \textbf{computable} if $\varphi$ is a function from $C$ to $D$ for which there is is an algorithim such that, upon input a generated point $p$ of $C^\dagger$ and $k \in \mathbb{N}$, returns a generated point $q$ of $D^\#$ such that $\|\varphi(p) - q\| < 2^{-k}$; in other words, $\varphi$ is a computable map if the $\varphi$-images of generated points of $C^\dagger$ are computable points of $D^\#$, uniformly in the code for the generated point, meaning that the code for the computable point $\varphi(p)$ can be computed from the code for $p$.  Once again, one may speak of the code of a computable map as the code of such an algorithm.  An isomorphism $\varphi:C\to D$ between \cstar-algebras is a \textbf{computable isomorphism} from $C^\dagger$ to $D^\#$ if it is a computable map with computable inverse.  (The computability of the inverse is automatic if $D^\#$ is computable.) 

There is a nice connection between presentations (in the group theoretic sense) of a countable group $G$ and presentations of its universal group \cstar-algebra $C^*(G)$; a thorough discussion of this connection is described in \cite[Section 7.2.8]{EMST}.  We mention here briefly the facts relevant to the discussion below.  

Given a countable group $G$ with distinguished generating set $X$, there is a corresponding presentation of $C^*(G)$, where the special points are those given by the elements of $X$ (or rather the canonical unitaries associated with these group elements).  Given groups $G$ and $H$ with distinguished generating sets $X$ and $Y$, it is straightforward to see that the canonical maps $C^*(G)\to C^*(G\times H)$ and $C^*(H)\to C^*(G\times H)$ are computable, when all of the algebras involved are given their presentations corresponding to the associated generating sets (and where $G\times H$ is given the generating set $(X\times \{1_H\})\cup (\{1_G\}\times Y)$).  Moreover, the isomorphism $C^*(G\times H)\cong C^*(G)\otimes_{\max} C^*(H)$ induces a presentation of $C^*(G)\otimes_{\max}C^*(H)$; by the previous sentence, it follows that the canonical maps $C^*(G)\to C^*(G)\otimes_{\max} C^*(H)$ and $C^*(H)\to C^*(G)\otimes_{\max}C^*(H)$ are computable, again when all of the algebras are equipped with the appropriate presentations.  For a more general discussion of presentations of tensor products of \cstar-algebras, see \cite[Subsection 2.3]{ssa}.  

While for many finitely generated groups $(G,X)$, the above presentation of $C^*(G)$ is merely c.e., there are instances when the presentation is in fact computable.  For example, consider the finitely generated free group $\mathbb{F}_n$ with its standard generating set.  In \cite[Theorem 1.5]{canyou}, it is shown that the corresponding presentation of $C^*(\mathbb{F}_n)$ is computable.  Moreover, the algorithm for determining the norm is uniform in $n$.  As a result, the presentation of $C^*(\mathbb{F}_\infty)$ corresponding to the standard generating set for $\mathbb{F}_\infty$ is also computable.  Indeed, given any generated point of the standard presentation of $\mathbb{F}_\infty$, one may first search for an $n$ for which the generated point is also a generated point of the standard presentation of $\mathbb{F}_n$ and then approximate the norm there.  Since the norm of the element in $C^*(\mathbb{F}_n)$ and in $C^*(\mathbb{F}_\infty)$ are the same, this yields the desired algorithm. 

\subsection{Applications to maximal tensor products}

\begin{thm}\label{maxexplicit}
Suppose that $C$ is qc-full and has a computable presentation $C^\dagger$.  Then $C$ does not have a weakly explicit maximal tensor norm.
\end{thm}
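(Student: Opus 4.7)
The plan is to mimic the proof of Theorem \ref{minexplicit}, with three substitutions: qc-fullness in place of local universality (to identify the game value as a tensor-norm expression inside $C$), the computable presentation $C^\dagger$ in place of the effective axiomatizability of $\Th(C)$ (to extract effective bounds on a universal sentence evaluated in $C$), and MIP$^{co}$=coRE in place of MIP*=RE.

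Suppose, towards a contradiction, that $C$ has weakly explicit maximal tensor norm witnessed by universal restricted formulae $(\varphi_n)$. Fix a nonlocal game $\frak G=(\pi,D)$ with $k$ questions and $n$ answers. By qc-fullness and Lemma \ref{gamevalues},
$$\val^{co}(\frak G) = \sup\{P^{\max}_{C,k^2n^2}(\pi(x,y)D(x,y,a,b)A^x_a, B^y_b) \ : \ A^x, B^y \text{ POVMs in } C\}.$$
Exactly as in the proof of Theorem \ref{minexplicit}, using the almost-POVM formula $\rho_n$ together with the computable modulus $\alpha$ coming from the function $\Delta_n$ of \cite{KEPJFA}, and replacing each occurrence of $P^{\max}_{C,k^2n^2}$ by its $\tfrac{1}{8}$-approximation $\varphi_{k^2n^2}$, one effectively produces a universal restricted sentence $\sigma_{k^2n^2}$ satisfying $|\val^{co}(\frak G) - \sigma^C_{k^2n^2}| < 1/8$.

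Next, I would exploit the computable presentation to effectively lower-bound $\sigma^C_{k^2n^2}$. Writing $\sigma_{k^2n^2}$ as $\sup_{\vec x}\psi(\vec x)$ with $\psi$ a restricted quantifier-free formula (hence uniformly continuous, and computable on tuples of generated points of $C^\dagger$), one enumerates a non-decreasing computable sequence of lower bounds converging to $\sigma^C_{k^2n^2}$ by evaluating $\psi$ on tuples of generated points whose norms are effectively controlled to lie in (or arbitrarily close to) the unit ball. Density of generated points together with uniform continuity of $\psi$ guarantees convergence to $\sigma^C_{k^2n^2}$. Simultaneously, item (2) of the Fact provides a computable sequence of upper bounds converging to $\val^{co}(\frak G)$.

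Combining these two streams contradicts MIP$^{co}$=coRE: given $\frak G$ satisfying the promise that either $\val^{co}(\frak G)\leq 1/2$ or $\val^*(\frak G)=1$, run both algorithms in parallel; if an upper bound on $\val^{co}(\frak G)$ ever drops below $5/8$, declare $\val^{co}(\frak G)\leq 1/2$, and if a lower bound on $\sigma^C_{k^2n^2}$ ever exceeds $3/4$, declare $\val^*(\frak G)=1$ (justified because such a lower bound forces $\val^{co}(\frak G) > 5/8 > 1/2$, and the promise then gives $\val^*(\frak G)=1$). Under the promise, exactly one event eventually occurs: if $\val^{co}(\frak G)\leq 1/2$ the first event does, while if $\val^*(\frak G)=1$ then $\val^{co}(\frak G)\geq 1$, so $\sigma^C_{k^2n^2}\geq 7/8$ and the second event does. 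The most delicate step is the construction of $\sigma_{k^2n^2}$ as a genuinely restricted universal formula that faithfully encodes the supremum over POVMs within the promised $1/8$ error; this is essentially the manipulation already carried out for Theorem \ref{minexplicit}, so no new obstacle arises.
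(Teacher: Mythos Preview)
Your proposal is correct and follows essentially the same strategy as the paper: contradict MIP$^{co}$=coRE by running the SDP upper-bound algorithm for $\val^{co}(\frak G)$ in parallel with a lower-bound search that uses qc-fullness (via Lemma~\ref{gamevalues}) and the computable presentation $C^\dagger$.

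The only difference is packaging. You first assemble the universal sentence $\sigma_{k^2n^2}$ (exactly as in Theorem~\ref{minexplicit}) and then compute lower bounds on $\sigma_{k^2n^2}^C$ by evaluating the inner quantifier-free matrix on generated points. The paper instead works directly: it searches for tuples of generated points of $C^\dagger$ forming $\gamma$-approximate POVMs (with $\gamma=\Delta_n(\Delta_\varphi(1/8))$) and computes lower bounds on $\varphi_{k^2n^2}$ there, declaring $\val^{co}(\frak G)=1$ once such a value exceeds $7/8$. This sidesteps the need to argue that $\sigma_{k^2n^2}$ is a bona fide restricted formula; note that your $\psi$ is not literally restricted, since it involves the computable connective $\alpha$, though this does no harm because all you actually use is that $\psi$ is computable on generated points. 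Both routes amount to the same enumeration, and your thresholds ($5/8$ and $3/4$) work just as well as the paper's.
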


\begin{proof}
Suppose, towards a contradiction, that $C$ has a weakly explicit maximal tensor norm as witnessed by the formulae $\varphi_n$.  For simplicity, we set $\varphi(A,B)$ to be the formula $\varphi_{k^2n^2}(\pi(x,y)D(x,y,a,b)A^x_a, B^y_b)$.  We claim that we can decide if a given nonlocal game $\frak G=(\pi,D)$ is such that $\val^{co}(\frak G)\leq 1/2$ or $\val^{co}(\frak G)=1$, promised that one of them is the case, contradicting MIP$^{co}$=coRE.  Indeed, first, as discussed above, run the semidefinite programming algorithm which gives upper bounds for $\val^{co}(\frak G)$; if that algorithm ever gives an upper bound below $1$, one can conclude that $\val^{co}(\frak G)\leq 1/2$.  On the other hand, start searching for tuples $A$ and $B$ consisting of generated points of $C^\dagger$ that form $\gamma$-approximate POVMs in $C$ of the appropriate length and computing lower bounds for $\varphi(A,B)$, which is possible since $\varphi_{k^2n^2}$ is universal and the presentation $C^\dagger$ is computable; here $\gamma=\Delta_n(\Delta_\varphi(1/8))$ and $\Delta_\varphi$ is the modulus of uniform continuity for the formula $\varphi$.  If this search ever sees a value above $7/8$, then the algorithm declares that $\val^{co}(\frak G)=1$.

To see that this algorithm works, suppose first that $\val^{co}(\frak G)=1$.  Then there is a correlation $p(x,y|a,b)\in \cqc(k,n)$ realizing this fact.  By assumption, such a correlation is of the form $\phi(A^x_a\otimes B^y_b)$ for some tuples of POVMs $A$ and $B$ in $C$ and state $\phi$ on $C\otimes_{\max}C$.  By Lemma \ref{gamevalues}, we have that $P_{C,k^2n^2}^{\max}(\pi(x,y)D(x,y,a,b)A^x_a, B^y_b)=1$ and thus $\varphi(A,B)>7/8$.  By approximating $A$ and $B$ by sequences $A'$ and $B'$ of suitably close generated points that form $\gamma$-approximate POVMs, we will see that $\varphi(A',B')>7/8$.  To see that this algorithm makes no mistakes, if $\varphi(A',B')>7/8$ on some $\gamma$-approximate POVMs $A',B'$ in $C$, then by the choice of $\gamma$, one can perturb them to actual POVMs $A$ and $B$ in $C$ for which $\varphi(A,B)>3/4$, and then $P_{C,k^2n^2}^{\max}(\pi(x,y)D(x,y,a,b)A^x_a, B^y_b)>1/2$, implying that $\val^{co}(\frak G)>1/2$ and thus $\val^{co}(\frak G)=1$. 
\end{proof}

As mentioned above, for any $n\geq 2$ (including $n=\infty$), $\mathbb{F}_n$ is qc-full.  Moreover, as mentioned in the previous subsection, $C^*(\mathbb{F}_n)$ has a computable presentation.  As a result, we obtain the following:

\begin{cor}
For any $n\geq 2$ (including $n=\infty$), $C^*(\mathbb{F}_n)$ does not have weakly explicit maximal tensor norm.
\end{cor}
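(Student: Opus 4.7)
The plan is to apply Theorem \ref{maxexplicit} directly, since both of its hypotheses have effectively been verified in the preceding material. What remains is simply to record, for $C = C^*(\mathbb{F}_n)$ with $n \in \{2, 3, \ldots, \infty\}$, that (i) $C$ is qc-full and (ii) $C$ admits a computable presentation.

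For qc-fullness, I would invoke the lemma on qc-full \cstar-algebras. Part (1) of that lemma says that any surjectively universal \cstar-algebra is qc-full, so in particular $C^*(\mathbb{F}_\infty)$ is qc-full. Part (3) then transfers qc-fullness to any group containing $\mathbb{F}_\infty$ as a subgroup, and the remark immediately following item (3) already notes that this applies to $\mathbb{F}_n$ for every $n \geq 2$. For $n = \infty$ the claim is immediate from part (1).

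For the computable presentation, I would cite the last paragraph of Subsection 5.1, where it is recorded that \cite[Theorem 1.5]{canyou} provides a computable presentation of $C^*(\mathbb{F}_n)$ in terms of the canonical unitaries corresponding to the standard free generators, and that the same argument, being uniform in $n$, yields a computable presentation of $C^*(\mathbb{F}_\infty)$ as well. With both hypotheses in hand, Theorem \ref{maxexplicit} immediately delivers the conclusion that $C^*(\mathbb{F}_n)$ does not have a weakly explicit maximal tensor norm.

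There is no genuine obstacle at the level of the corollary itself; all the serious work has been offloaded into Theorem \ref{maxexplicit} (where MIP$^{co}$=coRE is used to rule out the algorithm that a weakly explicit $\|\cdot\|_{\max}$ together with a computable presentation would produce) and into the prior identification of $C^*(\mathbb{F}_n)$ as qc-full with a computable standard presentation. The corollary is thus best presented as a one-line deduction naming the two inputs and citing Theorem \ref{maxexplicit}.
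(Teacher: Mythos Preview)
Your proposal is correct and matches the paper's approach exactly: the paper likewise deduces the corollary in one line from Theorem~\ref{maxexplicit}, citing that $\mathbb{F}_n$ is qc-full (via the lemma on qc-full \cstar-algebras) and that $C^*(\mathbb{F}_n)$ has a computable presentation (from the discussion in the preceding subsection invoking \cite{canyou}).
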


As mentioned above, any surjectively universal \cstar-algebra $C$ is qc-full.  We thus conclude:

\begin{cor}
If $C$ is a separable surjectively universal \cstar-algebra, then either $C$ does not have a computable presentation or else $C$ does not have weakly explicit maximal tensor norm.
\end{cor}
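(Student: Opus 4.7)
The plan is to derive this corollary as an immediate consequence of Theorem \ref{maxexplicit} together with item (1) of the earlier lemma asserting that every surjectively universal \cstar-algebra is qc-full. Concretely, I would argue by contraposition: suppose $C$ is a separable, surjectively universal \cstar-algebra that both admits a computable presentation $C^\dagger$ and has a weakly explicit maximal tensor norm. Since $C$ is surjectively universal, the aforementioned lemma yields that $C$ is qc-full. Consequently, the two hypotheses of Theorem \ref{maxexplicit} are satisfied, so that theorem forces $C$ not to have a weakly explicit maximal tensor norm, contradicting our assumption. Therefore at least one of the two properties must fail, which is exactly the assertion of the corollary.

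The entire deductive content of the proof is already packaged in Theorem \ref{maxexplicit} (which in turn rests on MIP$^{co}$=coRE) and in item (1) of the lemma characterizing qc-full \cstar-algebras (which cites \cite[Lemma 2.4]{tsirelson}); there is no genuine obstacle to overcome here. The only bookkeeping point worth flagging is that the notion of a computable presentation developed in Section 5 presupposes separability of the underlying \cstar-algebra, which is why the hypothesis of separability appears explicitly in the statement even though it plays no further role in the argument. Given how short the derivation is, the proof in the paper itself can be written in a single sentence invoking Theorem \ref{maxexplicit} and the qc-fullness of any surjectively universal algebra.
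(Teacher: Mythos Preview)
Your proposal is correct and matches the paper's approach exactly: the corollary is obtained immediately from Theorem \ref{maxexplicit} once one notes (via item (1) of the lemma on qc-fullness) that every surjectively universal \cstar-algebra is qc-full. The paper's proof is indeed just this one-line observation preceding the statement of the corollary.
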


The proof of Theorem \ref{maxexplicit} also shows the following:

\begin{thm}\label{comptensor}
Suppose that $C$ is qc-full.  Then there do not exist computable presentations $C^\dagger$ of $C$ and $(C\otimes_{\max}C)^\dagger$ of $C\otimes_{\max}C$ for which the mappings $a\mapsto a\otimes 1,a\mapsto 1\otimes a:C^\dagger\to (C\otimes_{\max}C)^\dagger$ are computable.  
\end{thm}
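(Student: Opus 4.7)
The plan is to mimic the proof of Theorem \ref{maxexplicit}, replacing the role played by a weakly explicit maximal tensor norm formula $\varphi$ with the ability to compute the norm of a $*$-polynomial in $C\otimes_{\max}C$ directly from its presentation. Suppose, for contradiction, that computable presentations $C^\dagger$ and $(C\otimes_{\max}C)^\dagger$ exist for which both left and right multiplication-by-$1$ inclusions $\iota_L,\iota_R:C^\dagger\to(C\otimes_{\max}C)^\dagger$ are computable. Composing, one sees that from any finite tuple of generated points $(A^x_a)$ and $(B^y_b)$ of $C^\dagger$ and any rational scalars $\pi(x,y)D(x,y,a,b)$, one can uniformly compute the generated point $\sum_{x,y,a,b}\pi(x,y)D(x,y,a,b)\,\iota_L(A^x_a)\iota_R(B^y_b)\in (C\otimes_{\max}C)^\dagger$, and hence (since $(C\otimes_{\max}C)^\dagger$ is computable) can compute its norm to any desired precision.

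Given a nonlocal game $\mathfrak{G}=(\pi,D)$, I would then run the two familiar algorithms in parallel to decide the promise problem underlying MIP$^{co}$=coRE. On the one hand, the semidefinite programming algorithm produces a computable sequence of upper bounds on $\val^{co}(\mathfrak{G})$; if such an upper bound ever drops below $1$, we declare $\val^{co}(\mathfrak{G})\leq 1/2$. On the other hand, using the computable presentation of $C^\dagger$, enumerate tuples of generated points $A,B$ that form $\gamma$-approximate POVMs (in the sense of Section 4.1, where $\gamma:=\Delta_n(\Delta_n(1/8))$ with $\Delta_n$ the stability modulus from \cite{KEPJFA}, applied twice to absorb both approximation by generated points and perturbation to actual POVMs), compute $\|\sum \pi(x,y)D(x,y,a,b)\iota_L(A^x_a)\iota_R(B^y_b)\|$ in $(C\otimes_{\max}C)^\dagger$ to within $1/16$, and if this approximation ever exceeds $7/8$, declare $\val^{co}(\mathfrak{G})=1$.

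Correctness of the second branch uses Lemma \ref{gamevalues} and qc-fullness: these yield
\[
\val^{co}(\mathfrak{G})=\sup\bigl\{\,P^{\max}_{C,k^2n^2}(\pi(x,y)D(x,y,a,b)A^x_a,B^y_b)\;:\;A^x,B^y\text{ POVMs in }C\,\bigr\},
\]
and since the quantity inside the norm coincides with the norm of the corresponding element of $(C\otimes_{\max}C)^\dagger$ via $\iota_L,\iota_R$, the supremum is the same when the POVMs are drawn from any dense subset such as $\gamma$-approximate POVMs in generated points of $C^\dagger$, up to the bookkeeping afforded by $\Delta_n$. Thus if $\val^{co}(\mathfrak{G})=1$, suitable approximate generated POVMs will witness a value above $7/8$; and conversely if the search ever outputs a value above $7/8$, stability lets us perturb $(A,B)$ to honest POVMs whose associated value still exceeds $3/4$, forcing $\val^{co}(\mathfrak{G})>1/2$.

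The main obstacle, as in Theorem \ref{maxexplicit}, is ensuring the transition between $\gamma$-approximate POVMs of generated points and bona fide POVMs, so that neither the lower-bound search nor the upper-bound semidefinite programming algorithm makes a mistake on the promised instances; this is handled cleanly by the computable modulus $\Delta_n$ from \cite{KEPJFA} together with the uniform continuity modulus of the polynomial $\sum \pi(x,y)D(x,y,a,b)X^x_a\otimes Y^y_b$ on the operator norm unit ball. Once this is in place, the two algorithms together decide the promise problem for $\val^{co}$, contradicting Theorem \ref{MIPco=coRE}.
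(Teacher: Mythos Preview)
Your proposal is correct and follows essentially the same approach as the paper's own proof: the paper simply observes that the argument of Theorem~\ref{maxexplicit} goes through once the weakly-explicit formula $\varphi$ is replaced by direct computation of $P^{\max}_{C,k^2n^2}$ on generated points via the computable presentations and computable inclusions, exactly as you describe. One small terminological slip: the element $\sum \pi(x,y)D(x,y,a,b)\,\iota_L(A^x_a)\iota_R(B^y_b)$ is a \emph{computable} point of $(C\otimes_{\max}C)^\dagger$ rather than a generated point, and your choice $\gamma=\Delta_n(\Delta_n(1/8))$ should have the inner modulus be that of the polynomial map (as you correctly note in your final paragraph), but neither affects the argument.
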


\begin{proof}
As in the proof of Theorem \ref{maxexplicit}, since $C^\dagger$ is a computable presentation, one can search for sequences of generated points of $C^\dagger$ that form approximate POVMs in $C$.  Moreover, the other assumptions of the theorem allow one to approximate $P^{\max}_{C,k^2,n^2}$ on tuples of generated points of $C^\dagger$.  The rest of the proof remains the same.
\end{proof}

We note that \cite[Subsection 2.3]{ssa} gives some very general conditions which ensure that the maps $a\mapsto a\otimes 1$ and $a\mapsto 1\otimes a$ are computable. 

Recalling the general discussion about presentations of group \cstar-algebras in the previous subsection together with the fact that $\mathbb{F}_n$ is qc-full, a particular consequence of Theorem \ref{comptensor} is the following:

\begin{cor}\label{FNTquestion}
For any $n\geq 2$ (including $n=\infty$), the standard presentation of $C^*(\mathbb{F}_n\times \mathbb{F}_n)$ is not computable.  
\end{cor}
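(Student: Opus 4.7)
The plan is to derive this corollary directly from Theorem \ref{comptensor} applied to $C := C^*(\mathbb{F}_n)$. Three pieces of information from the preceding sections need to be assembled: (i) $C^*(\mathbb{F}_n)$ is qc-full, which is the concrete consequence of item (3) of the qc-full lemma, since $\mathbb{F}_n$ contains $\mathbb{F}_\infty$ for any $n \in \{2,3,\ldots,\infty\}$; (ii) the standard presentation $C^\dagger$ of $C^*(\mathbb{F}_n)$ (the one coming from the standard generating set of $\mathbb{F}_n$) is a computable presentation, as recorded in Subsection 5.1 using \cite[Theorem 1.5]{canyou}; (iii) under the canonical isomorphism $C^*(\mathbb{F}_n \times \mathbb{F}_n) \cong C^*(\mathbb{F}_n) \otimes_{\max} C^*(\mathbb{F}_n)$, the standard presentation of $C^*(\mathbb{F}_n \times \mathbb{F}_n)$ obtained from the generating set $(X \times \{1\}) \cup (\{1\} \times X)$ (where $X$ is the standard generating set of $\mathbb{F}_n$) plays the role of a presentation $(C \otimes_{\max} C)^\dagger$, and the inclusions $a \mapsto a \otimes 1$ and $a \mapsto 1 \otimes a$ correspond to the canonical maps $C^*(\mathbb{F}_n) \to C^*(\mathbb{F}_n \times \mathbb{F}_n)$ induced by the inclusions of $\mathbb{F}_n$ into the two factors. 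As noted in the general discussion of group \cstar-algebra presentations in Subsection 5.1, these canonical maps are computable when all algebras are equipped with their standard presentations coming from the associated generating sets.

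With these observations in hand, the argument is a one-line contrapositive. Suppose, towards a contradiction, that the standard presentation of $C^*(\mathbb{F}_n \times \mathbb{F}_n)$ is computable. Then we have at our disposal: a computable presentation $C^\dagger$ of $C = C^*(\mathbb{F}_n)$ (by (ii)), a computable presentation $(C \otimes_{\max} C)^\dagger$ of $C \otimes_{\max} C$ (by the standing contradictory hypothesis together with (iii)), and computability of both maps $a \mapsto a \otimes 1$ and $a \mapsto 1 \otimes a$ between these two presentations (again by (iii)). Since $C$ is qc-full by (i), this directly contradicts Theorem \ref{comptensor}.

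There is no genuine obstacle here beyond bookkeeping; the substantive work has already been done in Theorem \ref{comptensor} and in the identification of the standard presentations. The only point that requires a line of care is verifying (iii): one must check that the presentation of $C^*(G \times H)$ arising from the product generating set is the same, as a presentation of the \cstar-algebra $C^*(G) \otimes_{\max} C^*(H)$, as the presentation one would build by tensoring the individual presentations and adjoining rational $*$-polynomials in the two commuting families of generators. This, however, is exactly the content pointed to in \cite[Section 7.2.8]{EMST} and \cite[Subsection 2.3]{ssa} already cited in Subsection 5.1, so no new argument is required.
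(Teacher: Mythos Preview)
Your proof is correct and follows essentially the same approach as the paper: the paper simply notes that combining the qc-fullness of $\mathbb{F}_n$ with the general discussion of group \cstar-algebra presentations in Subsection~5.1 yields the result as a particular case of Theorem~\ref{comptensor}. You have spelled out the three ingredients and the contrapositive more explicitly than the paper does, but the logical content is identical.
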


The previous corollary (for $n=2$) answers a question posed by Fritz, Netzer, and Thom in \cite{canyou}.

By \cite[Theorem 3.3]{fox}, the standard presentation of $C^*(\mathbb{F}_n\times \mathbb{F}_n)$ is c.e.  It would be interesting to determine if $C^*(\mathbb{F}_n\times \mathbb{F}_n)$ is a solution to the following seemingly open question:

\begin{question}
Is there a separable \cstar-algebra that has a c.e. presentation but no computable presentation?
\end{question}

It is a well-known open question (due to Ozawa \cite[Section 3]{OzQ}) whether or not $C^*(\mathbb{F}_n\times \mathbb{F}_n)$ has the LP.  Fixing a surjective *-homomorphism $$C^*(\mathbb{F}_\infty)\to C^*(\mathbb{F}_n\times \mathbb{F}_n),$$ this is equivalent to asking if the identity map on $C^*(\mathbb{F}_n\times \mathbb{F}_n)$ admits a ucp lift $C^*(\mathbb{F}_n\times \mathbb{F}_n)\to C^*(\mathbb{F}_\infty)$.  Corollary \ref{FNTquestion} allows us to conclude the following result, which perhaps explains why the LP for $C^*(\mathbb{F}_n\times \mathbb{F}_n)$ is so difficult:

\begin{cor}
Given a surjective *-homomorphism $C^*(\mathbb{F}_\infty)\to C^*(\mathbb{F}_n\times \mathbb{F}_n)$, there is no \emph{computable} ucp lift $C^*(\mathbb{F}_n\times \mathbb{F}_n)\to C^*(\mathbb{F}_\infty)$ (where both sides are equipped with their standard presentations).
\end{cor}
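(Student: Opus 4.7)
The plan is to argue by contradiction: assume such a computable ucp lift $\psi : C^*(\mathbb{F}_n \times \mathbb{F}_n) \to C^*(\mathbb{F}_\infty)$ existed (with respect to the standard presentations), and use it to compute the norm on the standard presentation of $C^*(\mathbb{F}_n \times \mathbb{F}_n)$, contradicting Corollary \ref{FNTquestion}.

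The key preliminary observation is that a ucp lift of the identity must in fact be isometric. Let $\pi : C^*(\mathbb{F}_\infty) \to C^*(\mathbb{F}_n \times \mathbb{F}_n)$ be the given surjection. Since $\psi$ is ucp it is contractive, and $\pi$, being a $*$-homomorphism, is contractive as well. From $\pi \circ \psi = \mathrm{id}$ one obtains
$$\|x\| = \|\pi(\psi(x))\| \leq \|\psi(x)\| \leq \|x\|$$
for every $x \in C^*(\mathbb{F}_n \times \mathbb{F}_n)$, so $\|\psi(x)\| = \|x\|$.

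Now suppose $\psi$ were computable. Given a generated point $p$ of the standard presentation of $C^*(\mathbb{F}_n \times \mathbb{F}_n)$ and $k \in \mathbb{N}$, the computability of $\psi$ allows us to effectively produce a generated point $q$ of the standard presentation of $C^*(\mathbb{F}_\infty)$ with $\|\psi(p) - q\| < 2^{-(k+1)}$. Since the standard presentation of $C^*(\mathbb{F}_\infty)$ is computable (as discussed in Section~5.1), we can then effectively produce a rational $r$ with $|\|q\| - r| < 2^{-(k+1)}$. By the triangle inequality and the isometric property established above,
$$\bigl|\|p\| - r\bigr| = \bigl|\|\psi(p)\| - r\bigr| \leq \|\psi(p) - q\| + \bigl|\|q\| - r\bigr| < 2^{-k}.$$
This procedure, uniform in a code for $p$, would equip the standard presentation of $C^*(\mathbb{F}_n \times \mathbb{F}_n)$ with a norm-approximation algorithm, contradicting Corollary~\ref{FNTquestion}.

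The argument is essentially a short diagram chase once the isometricity of $\psi$ is noted, so there is no serious obstacle here. The work has already been done upstream: the real content lies in Corollary~\ref{FNTquestion}, which in turn rests on the MIP$^{co}$=coRE input via Theorem~\ref{comptensor}. The present corollary just repackages non-computability of the target presentation as non-computability of any lift to a source presentation that \emph{is} computable.
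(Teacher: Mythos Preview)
Your proof is correct and is exactly the argument the paper intends: the corollary is stated without proof immediately after Corollary~\ref{FNTquestion}, and the implied reasoning is precisely the observation that an isometric computable lift into a computably presented algebra would transport the norm-approximation algorithm back to $C^*(\mathbb{F}_n\times\mathbb{F}_n)$, contradicting Corollary~\ref{FNTquestion}.
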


For $n\geq 2$, and $\epsilon>0$, define the ``softenings'' $C^*(\mathbb{F}_n\times \mathbb{F}_n)_\epsilon$ to be the universal \cstar-algebra generated by unitaries $u_1,\ldots,u_n,v_1,\ldots,v_n$ satisfying $\|[u_i,v_j]\|\leq \epsilon$ for all $i,j=1,\ldots,n$.  In \cite[Proposition 6.10]{ES}, it was shown that $C^*(\mathbb{F}_n\times \mathbb{F}_n)_\epsilon$ is RFD, whence, by \cite[Theorem 3.5]{fox}, the standard presentations of these algebras are computable, uniformly in $\epsilon$.  Since these algebras form an inductive system whose inductive limit is $C^*(\mathbb{F}_n\times \mathbb{F}_n)$, we have the following consequence of Corollary \ref{FNTquestion}:

\begin{cor}
Fix $n\geq 2$.  For each *-polynomial $p(x_1,\ldots,x_n,y_1,\ldots,y_n)$ and $m\geq 1$, set $\alpha_{m,p}:=\|p(\vec u,\vec v)\|_{C^*(\mathbb{F}_n\times \mathbb{F}_n)_{1/m}}$ and $\alpha_{p}:=\|p(\vec u,\vec v)\|_{C^*(\mathbb{F}_n\times \mathbb{F}_n)}$.  Then there is no algorithm such that, upon input $p$ and $q\in \mathbb{Q}^{>0}$, determines an $m$ such that $\|\alpha_{m,p}-\alpha_p\|<q$.  
\end{cor}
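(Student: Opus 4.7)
The plan is to prove this corollary by a direct reduction to Corollary \ref{FNTquestion}: an algorithm as described would allow one to compute the norm on the standard presentation of $C^*(\mathbb{F}_n\times \mathbb{F}_n)$, contradicting the fact that this presentation is not computable.

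First I would invoke the result (stated just before the corollary, citing \cite[Proposition 6.10]{ES} and \cite[Theorem 3.5]{fox}) that the softenings $C^*(\mathbb{F}_n\times\mathbb{F}_n)_{1/m}$ admit computable standard presentations \emph{uniformly in $m$}. This uniformity is the key ingredient: it yields an algorithm that, on input a $*$-polynomial $p$, a positive integer $m$, and a rational $\delta>0$, produces a rational approximation of $\alpha_{m,p}$ to within $\delta$.

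Next, suppose toward a contradiction that there is an algorithm $\mathcal A$ which, on input $p$ and $q\in\mathbb{Q}^{>0}$, outputs some $m=m(p,q)$ with $|\alpha_{m,p}-\alpha_p|<q$. I would then combine $\mathcal{A}$ with the uniform softening algorithm as follows: on input $p$ and $q$, first run $\mathcal{A}$ on $(p,q/2)$ to obtain an $m$ with $|\alpha_{m,p}-\alpha_p|<q/2$, and then use the uniform softening algorithm to produce a rational $r$ with $|r-\alpha_{m,p}|<q/2$. By the triangle inequality, $|r-\alpha_p|<q$, so this yields an algorithm computing $\alpha_p$ from $p$ and $q$.

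Finally, I would note that this exactly says that the standard presentation of $C^*(\mathbb{F}_n\times \mathbb{F}_n)$ is computable, contradicting Corollary \ref{FNTquestion}. There is no substantial obstacle in the argument once one has the uniformity of the computable presentations of the softenings in hand; the content of the corollary is essentially a restatement of the failure of computability of the limit presentation in terms of the approximating inductive system.
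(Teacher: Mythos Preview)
Your proposal is correct and follows exactly the argument the paper intends: the corollary is stated as an immediate consequence of Corollary \ref{FNTquestion} together with the uniform computability of the softenings, and your reduction via the triangle inequality is precisely the missing two-line justification. There is nothing to add.
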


\end{document}